\definecolor{gold}{rgb}{0.88,0.68,0.13}
\definecolor{dgreen}{rgb}{0.0,0.40,0.13}
\definecolor{dblue}{rgb}{0.20,0.20,0.80}
\newcommand{\tbf}[1]{\textbf{#1}}
\newcommand{\te}[1]{\text{#1}}
\newcommand{\cn}{\mathbb{C}}
\newcommand{\tn}{\mathbb{T}}
\newcommand{\zn}{\mathbb{Z}}
\newcommand{\normspace}[2]{\|#2\|_{#1}}
\newcommand{\Prim}{\te{Prim}\,}
\newcommand{\Repn}{\te{Rep}_n\,}
\newcommand{\Repnp}{\te{Rep}_n'\,}
\newcommand{\Irrn}{\te{Irr}_n\,}
\newcommand{\sse}{\subseteq}
\newcommand{\mc}{\mathcal}
\newcommand{\ov}{\overline}
\newcommand{\wh}{\widehat}
\newcommand{\frsub}[1]{{}_{#1}}
\newcommand{\hn}{\mc{H}_n}
\newcommand{\cstarg}{C^*(G)}
\renewcommand{\l}{\left}
\renewcommand{\r}{\right}
\newcommand{\ve}{\varepsilon}
\newcommand{\usf}{\textsf{U}}
\newcommand{\bsf}{\textsf{B}}
\newcommand{\msc}[1]{\mathscr{#1}}
\newcommand{\Tr}{\te{Tr}\,}
\newtheorem{thm}{Theorem}[section]
\newtheorem{prop}[thm]{Proposition}
\newtheorem{cor}[thm]{Corollary}
\newtheorem{lemma}[thm]{Lemma}
\theoremstyle{definition}
\newtheorem{defn}[thm]{Definition}
\newcommand{\dlim}[2]{\displaystyle{\lim_{#1\rightarrow#2}}}
\newcounter{nitem}
\newcounter{alphaitem}
\numberwithin{equation}{section}
\newtheorem{example}[thm]{Example}
\newtheorem{rmk}[thm]{Remark}
\newcommand{\on}{\operatorname}
\newcommand{\mr}{\mathrm}
\newcommand{\B}{\mathbb}
\newcommand{\Z}{\B{Z}}
\newcommand{\ol}{\overline}
\newcommand{\mbbm}{\mathbbm}
\newcommand{\T}{\B{T}}
\newcommand{\str}{\texorpdfstring}
\begin{document}
\title{The Topology of the Unitary Dual of Crystallographic Groups}

\author{Frankie Chan}
\address{FC: Department of Mathematics, University of Chicago, Chicago IL 60637, USA}
\email[]{frankiechan@uchicago.edu}

\author{Ellen Weld}
\address{EW: Department of Mathematics and Statistics, Sam Houston State University, Huntsville TX 77341, USA}
\email[]{elw028@shsu.edu}

\date{\today}

\subjclass[2010]{Primary 22D10}
\thanks{\textsc{Department of Mathematics, University of Chicago, Chicago IL 60637, USA}}
\thanks{\textsc{Department of Mathematics and Statistics, Sam Houston State University, Huntsville TX 77341, USA}}

\begin{abstract}
We provide a procedure for generating the irreducible representations of crystallographic groups in any dimension. We also furnish a strategy to investigate the topology of the unitary dual of a crystallographic group using sequences of matrices. All irreducible representations (up to unitary equivalence) of the dimension 3 crystallographic group 90 and some calculations involving sequences of these irreducible representations are included as a proof of concept of this procedure and strategy.
\end{abstract}

\maketitle

\section{Introduction}

Representation theory can offer insight into complicated algebraic structures by converting abstract computations into concrete linear algebra problems. In particular, the topological space of irreducible unitary representations (up to unitary equivalence) of a locally compact group $G$, called the unitary dual, contains rich information about the group and associated algebras. However, two concerns are immediately raised from this line of inquiry. First, generating a complete list of irreducible representations presents its own computational challenge and, second, the unitary dual once assembled is often intractable, frequently failing to be even Hausdorff. 

In this paper, we offer an answer to both of these challenges for the class of crystallographic groups | discrete cocompact groups of isometries of Euclidean space (see \cite{Hil86} for a brief introduction). Crystallographic groups are of interest to areas of mathematics beyond group theory as well as to physics and chemistry. For example, Bieberbach groups (torsion-free crystallographic groups) are exactly the fundamental groups of flat compact Riemannian manifolds (see \cite{Cha86}, \cite{CR03}) and their study has provided useful tools for the investigation of these manifolds. Crystallography, the scientific branch studying molecular and crystalline structure, uses space groups (dimension 3 crystallographic groups) to describe the symmetries of crystals. This field has produced a number of useful resources for investigating space groups such as the Bilbao Crystallographic Server (\url{http://www.cryst.ehu.es}, see \cite{Aetal11}, \cite{Aetal06a}, \cite{Aetal06b}) which provides a variety of tools for analysis of space groups including their irreducible representations. However, the focus of these resources is for physical applications and they do not lend themselves to the analysis of more abstract mathematical objects. For this reason, we remain interested in providing a systematized method of generating irreducible representations from a theoretical mathematics perspective.

This paper uses an abstract group definition that does not explicitly reference Euclidean space, an approach justified by Bieberbach's construction \cite{Bie11}, \cite{Bie12}. Our definition of a {\em crystallographic group of dimension $r$} is a discrete group $G$ fitting into a short sequence of the form
\[1\rightarrow N\rightarrow G\rightarrow D\rightarrow 1\]
where $N\cong \zn^r$ (the {\em lattice}) is maximally abelian in $G$ and $D$ (the {\em point group}) is a finite group (see Section \ref{subsec:crystallography_group_and_Mackey}). 
Because $G$ is finitely generated discrete virtually abelian, all irreducible representations of $G$ are finite dimensional (\cite{Moo72}).

To the first obstacle of producing the irreducible representations of a crystallographic group, we provide code for systematically computing irreducible representations in dimensions 2, 3, and 4 (\cite{ChaWel24}). Our code is written in GAP \cite{GAP4}, a computational group theory program, and uses a package called CrystCat \cite{CrystCat}. This package relies on the work of Brown, B\"{u}low, Neub\"{u}ser, Wondratschek, and Zassenhaus in \cite{BEA78}, a text containing details about crystallographic groups of dimension up to 4. Although the procedure from our code is valid for crystallographic groups of any dimension, the group data for dimensions larger than 4 is not readily available in comprehensive packages. We also note that the runtime can be prohibitively long, with computations most efficiently completed for dimensions 2 and 3.

The procedure to produce these irreducible representations, presented and justified in Section \ref{subsec:the_program}, uses the Mackey Machine (Theorem  \ref{thm:Mackey_machine}). This classic result is extremely powerful and abstractly describes the unitary dual of certain groups as a set. Although this result provides the necessary theory for finding the irreducible representations of the group, it does not provide a practical road map for actually computing them. This difficulty is addressed by using projective representations (see Section \ref{subsec:unitary_projective_representations}) and their deep connections to finite group theory. Due to this reliance on finite groups, our code does require access to a library of finite group representations.

The authors note that a paper by Taylor (\cite{Tay89}) does provide a way to investigate $\wh{G}$ for some crystallographic groups, but does not provide a systematic method. Providing such a system is one motivation for this article.

To the second obstacle, we apply $C^*$-algebraic techniques and focus on sequences of matrices. The unitary dual of a locally compact group is topologized via a canonical bijection with the spectrum of the associated group $C^*$-algebra | a space carrying a natural, albeit complicated, topology (see Section \ref{subsec:spectrum_defn}). The spectra of $C^*$-algebras contains rich information about the algebra but are rarely Hausdorff (in fact, they are not always $T_0$). Fortunately, the $C^*$-algebras arising from crystallographic groups possess a more manageable spectrum. This analysis is conducted in Section \ref{sec:convergence_of_sequences} where we describe and justify a method of using sequences of matrices to uncover topological characteristics in the unitary dual/spectrum of the group $C^*$-algebra. After applying Section \ref{sec:representations_of_crystallography_groups} to produce sequences of irreducible representations $\pi_k$ converging to $\pi$, Theorems \ref{thm:main_result} and \ref{thm:character_theory} then provide the strategy for determining exactly the irreducible components of $\pi$. In Section \ref{sec:group_90}, we exhibit this strategy by examining the dimension 3 crystallographic group 90.

We would also like to note that using sequences to investigate the topology of $\wh{G}$ has been explored in \cite{Bag68} and \cite{Rae82}, but we have specialized it to these particular groups.


\subsection*{Acknowledgments}

The authors want to thank Iason Moutzouris for his careful review and thoughtful comments on previous drafts of this document. They are also grateful to S. Joseph Lippert for his technical help with the code in addition to catching errors in the Appendix.


\section{Preliminaries}\label{sec:prelims}

\subsection{Spectra of Group \str{$C^*$}{C-star}-algebras}\label{subsec:spectrum_defn}
We begin with a brief introduction to $C^*$-algebras and their spectra. Once this theory is established, we will define the analogous group objects and phrase much of our inquiry in terms of those definitions. The primary source for this subsection is the classic text {\em $C^*$-algebras} by Dixmier (\cite{Dix77}).

An associative algebra $A$ over $\cn$ is a {\em $C^*$-algebra} if it is closed under a norm $\|\cdot\|$ and has an involution $*:A\rightarrow A$ such that for all $a,b\in A$,
$$(1)\,\,\,\,\|ab\|\leq \|a\|\,\|b\|,\quad (2)\,\,\,\,\|a^*\|=\|a\|,\quad\te{ and }\quad (3)\,\,\,\,\|aa^*\|=\|a\|^2.$$
Property $(3)$ is called the {\em $C^*$-condition}. For a Hilbert space $\mc{H}$, we let $\bsf(\mc{H})$ denote the set of bounded linear operators on $\mc{H}$. If $A$ is a $C^*$-algebra, a {\em representation} $\pi$ of $A$ on $\mc{H}$ is a linear, multiplicative, involutive map $\pi:A\rightarrow \bsf(\mc{H})$. The {\em dimension} of $\pi$ is the (Hilbert) dimension of $\mc{H}$, which we denote by $\dim \pi$. We say a representation $\pi:A\rightarrow \bsf(\mc{H})$ is {\em nondegenerate} if the set $\{\pi(a)\xi\colon a\in A,\xi\in \mc{H}\}$ is dense in $\mc{H}$. We may always arrange for representations of $C^*$-algebras to be nondegenerate by restricting $\mc{H}$ to a suitable subspace. The celebrated Gelfand-Naimark result states that every $C^*$-algebra has a norm preserving nondegenerate representation and so we may view every $C^*$-algebra ``concretely", that is, as a sub-$*$-algebra of $\bsf(\mc{H})$ for some Hilbert space. 

Fix a $C^*$-algebra $A$. We say two representations, $\pi:A\rightarrow\bsf(\mc{H})$ and $\pi':A\rightarrow \bsf(\mc{H}')$ are {\em equivalent}, denoted $\pi\simeq\pi'$, if there exists a Hilbert space isomorphism $U:\mc{H}\rightarrow\mc{H}'$ such that $U\pi(a)=\pi'(a)U$ for all $a\in A$. If there exists a closed subspace $\mc{K}$ of $\mc{H}$ such that the set $\{\pi(a)\eta\colon a\in A,\eta\in\mc{K}\}\sse \mc{K}$, then we say that $\mc{K}$ is an {\em invariant subspace of $\pi$} and we can define a representation $\pi_{\mc{K}}:A\rightarrow \bsf(\mc{K})$ by $\pi_{\mc{K}}(a)\eta=\pi(a)\eta$ for all $a\in A$, $\eta\in \mc{K}$. We call $\rho\simeq\pi_{\mc{K}}$ a {\em subrepresentation of $\pi$} and write either $\rho\leq \pi$ or $\pi\geq \rho$. We say a representation $\pi:A\rightarrow \bsf(\mc{H})$ is {\em irreducible} if the only closed subspaces $\mc{K}\sse \mc{H}$ which are invariant under $\pi$ are $\{0\}$ and $\mc{H}$.

The {\em kernel} of a representation $\pi$ of $A$ on $\mc{H}$ is the set 
\[\ker\pi=\{a\in A\colon \pi(a)\xi=0\te{ for all }\xi\in\mc{H}\}.\]
A two-sided ideal of $A$ is said to be {\em primitive} if it is the kernel of a non-zero irreducible representation of $A$ on some Hilbert space. The set of all primitive ideals of $A$ is denoted by $\Prim(A)$.

Let $\mathfrak{P}\sse\Prim(A)$ be a collection of primitive ideals of $A$. Define $\mc{I}(\mathfrak{P}):=\bigcap_{I\in\mathfrak{P}}I$ and the {\em closure} of $\mathfrak{P}$ by 
\[\ov{\mathfrak{P}}:=\{J\in\Prim(A)\colon \mc{I}(\mathfrak{P})\sse J\}.\]
These $\ov{\mathfrak{P}}$ generate the {\em Jacobson topology}, which makes $\Prim(A)$ into a $T_0$-space, i.e., for all $I,J\in \Prim(A)$ with $I\neq J$, there exists a neighborhood of $I$ which does not contain $J$ or there exists a neighborhood of $J$ which does not contain $I$. When endowed with the Jacobson topology, we call $\Prim(A)$ the {\em primitive spectrum of $A$}. 

The {\em spectrum of $A$}, denoted by $\wh{A}$, is the set of non-zero irreducible representations under equivalence ($\pi'\in[\pi]\in\wh{A}\iff\pi\simeq\pi'$) endowed with the inverse image of the Jacobson topology under the canonical map
\begin{align*}
\wh{A}&\rightarrow \Prim(A)\\
[\pi]&\mapsto \ker\pi.
\end{align*}
We will call the topology on $\wh{A}$ (the pull-back of the Jacobson topology) the {\em Fell topology}. Let $\wh{A}_n\sse\wh{A}$ be the set of classes of non-zero irreducible representations of dimension $n$ and set ${}_n\wh{A}=\bigcup_{k=1}^{n}\wh{A}_k$. In the Fell topology, ${}_n\wh{A}$ is closed in $\wh{A}$ and $\wh{A}_n$ is open in ${}_n\wh{A}$. 

In general, $\wh{A}$ is extremely complicated though, in special cases, it is $T_0$.

\begin{prop}[\cite{Dix77} 3.1.6 (p.71)]\label{prop:canonical_homeomorphism}
The following are equivalent.
    \begin{itemize}
        \item[(i)] $\wh{A}$ is a $T_0$-space.
        \item[(ii)] Two irreducible representations of $A$ with the same kernel are equivalent.
        \item[(iii)] The canonical map $\wh{A}\rightarrow\Prim(A)$ is a homeomorphism.
    \end{itemize}
\end{prop}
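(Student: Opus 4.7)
The plan is to establish the cycle (iii) $\Rightarrow$ (i) $\Rightarrow$ (ii) $\Rightarrow$ (iii). Throughout, write $f\colon \wh{A}\to \Prim(A)$ for the canonical map $[\pi]\mapsto \ker\pi$. Two structural facts will drive the argument: $f$ is surjective by the very definition of primitive ideals, and the topology on $\wh{A}$ is by construction the initial topology along $f$, so the closed subsets of $\wh{A}$ are exactly $f^{-1}(C)$ as $C$ ranges over Jacobson-closed subsets of $\Prim(A)$.

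The two easy implications dispatch quickly. For (iii) $\Rightarrow$ (i): since $\Prim(A)$ is $T_0$ (as recalled in the excerpt), any homeomorph of it is $T_0$. For (ii) $\Rightarrow$ (iii): condition (ii) is precisely injectivity of $f$, so combined with surjectivity $f$ is a continuous bijection, and because every closed set of $\wh{A}$ has the form $f^{-1}(C)$, its image $f(f^{-1}(C)) = C$ is closed, so $f^{-1}$ is continuous as well and $f$ is a homeomorphism.

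The substantive implication is (i) $\Rightarrow$ (ii), where I would invoke the standard reformulation that a space is $T_0$ iff distinct points have distinct singleton-closures. The plan is to compute, for each $[\pi]\in \wh{A}$,
\[\ov{\{[\pi]\}} \;=\; f^{-1}\!\l(\ov{\{\ker\pi\}}\r) \;=\; \{[\rho]\in \wh{A}: \ker\pi\sse \ker\rho\},\]
using that $\ov{\{\ker\pi\}}$ in the Jacobson topology equals $\{J\in\Prim(A): \ker\pi\sse J\}$ by the definition given earlier. The right-hand side depends on $[\pi]$ only through $\ker\pi$, so if $\ker\pi = \ker\pi'$ then $\ov{\{[\pi]\}} = \ov{\{[\pi']\}}$, and the $T_0$ hypothesis forces $[\pi]=[\pi']$.

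The main obstacle, such as it is, consists only in correctly identifying the closure of a singleton in $\wh{A}$. Once the pullback-topology description of closed sets is invoked, this reduces to a one-line computation, and the rest of the argument is formal point-set topology.
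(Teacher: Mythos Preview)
Your argument is correct and is essentially the standard proof. Note, however, that the paper does not supply its own proof of this proposition: it is stated with a citation to \cite[3.1.6]{Dix77} and no argument is given, so there is nothing in the paper to compare against beyond observing that your reasoning matches the classical one found in Dixmier.
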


We now turn our attention to the theory required to not only define the (full) group $C^*$-algebra but also the unitary dual. With a single exception, all of the notions discussed above have an analogous notion in the group case.

Let $G$ be a discrete group and let $\usf(\mc{H})$ be the group of unitary operators on a Hilbert space $\mc{H}$. A {\em unitary representation}, or simply {\em representation}, $\pi$ of $G$ in $\mc{H}$ is a group homomorphism $\pi:G\rightarrow \usf(\mc{H})$. The {\em dimension of $\pi$} is $\dim\mc{H}$ and is denoted by $\dim \pi$. There is no notion of degeneracy for unitary representations. We say representations $\pi:G\rightarrow \usf(\mc{H})$ and $\pi':G\rightarrow \usf(\mc{H}')$ are {\em unitarily equivalent}, or {\em equivalent}, if there exists a unitary $U:\mc{H}\rightarrow \mc{H}'$ such that $U\pi(g)=\pi'(g)U$ for all $g\in G$; in which case, we write $\pi\simeq \pi'$.

We say that $\mc{V}$ is a $\pi$-invariant in $\mc{H}$ if $\mc{V}\sse\mc{H}$ and $\pi(g)\xi\in \mc{V}$ for all $g\in G,\xi\in \mc{V}$. $\pi$ is said to be {\em irreducible} if the only $\pi$-invariant closed subspaces of $\mc{H}$ are $\{0\}$ and $\mc{H}$. The set of equivalence classes of all irreducible representations of $G$, denoted by $\wh{G}$, is called the {\em unitary dual} of $G$. Although we should write $[\pi]\in \wh{G}$ to indicate an equivalence class of $\pi$, we will frequently abuse notation and write $\pi\in\wh{G}$ when context is clear. We let $\wh{G}_k\sse \wh{G}$ be classes of irreducible representations of $G$ of dimension $k$ and ${}_n\wh{G}=\bigcup_{k=1}^n\wh{G}_k$. 

When $\mc{V}$ is a $\pi$-invariant closed subspace of $\mc{H}$, we may define $\pi_{\mc{V}}:G\rightarrow\usf (\mc{V})$ by $\pi_{\mc{V}}(g)=\pi(g)|_{\mc{V}}$, which we call a {\em subrepresentation of $\pi$}. We indicate $\rho$ is a subrepresentation of $\pi$ by writing $\rho\leq \pi$ or $\rho\geq \pi$, just as in the $C^*$-case. 
Let $\{\pi_{\lambda}\}_{\lambda\in \Lambda}$ (for $\Lambda\neq 0$) be a collection of representations $\pi_{\lambda}$ of $G$ on $\mc{H}_{\lambda}$ and set $\mc{H}_{\Lambda}=\bigoplus_{\lambda\in\Lambda}\mc{H}_{\lambda}$, the Hilbert space direct sum. Then we may define the {\em direct sum} of $\{\pi_{\lambda}\}_{\lambda\in\Lambda}$ by 
\[\bigoplus_{\lambda\in \Lambda}\pi_{\lambda}(g)((\xi_{\lambda})_{\lambda\in \Lambda})=(\pi_{\lambda}(g)(\xi_{\lambda}))_{\lambda\in\Lambda}\quad \te{ for all }g\in G,(\xi_{\lambda})_{\lambda\in\Lambda}\in\mc{H}_{\Lambda}.\]
If $\pi_{\lambda}=\sigma$ for all $\lambda\in\Lambda$ and the cardinality of $\Lambda$ is $m$, then we will write $\bigoplus_{\lambda\in\Lambda}\pi_{\lambda}=\sigma^{\oplus m}$.
When $\pi$ is finite dimensional, we may always find pairwise inequivalent irreducible representations $\{\sigma_j\}$ such that
\[\pi\simeq\bigoplus_{j=1}^{\ell}\sigma_j^{\oplus m_j}\]
for some $m_j\in\zn_{>0}$. We call each $\sigma_j^{\oplus m_j}$ an {\em isotypic component of $\pi$}.

Given any locally compact group $G$, we define the involutive Banach algebra $L^1(G)$ as the set of absolutely integrable functions with respect to the Haar measure. There exists a one-to-one correspondence between unitary representations of $G$ and nondegenerate representations of $L^1(G)$ (defined as in the case of $C^*$-algebras with obvious modifications) which preserves dimension. The {\em reduced $C^*$-algebra of $G$} is 
\[C_{\lambda}^*(G):=\ov{\lambda_{L^1(G)}(L^1(G))}^{\|\cdot\|_2}\]
where $\lambda_{L^1(G)}$ is the $L^1(G)$ representation associated to $\lambda_G:G\rightarrow \bsf(L^2(G))$ by setting $\lambda_G(s)f(t)=f(s^{-1}t)$ for all $s\in G$. Using the norm on $L^1(G)$ given by
\[\|f\|_u=\sup\{\|\pi(f)\|:\pi\te{ is a *-representation of }L^1(G)\},\]
we define the {\em full group $C^*$-algebra of $G$} to be 
\[C^*(G):=\ov{L^1(G)}^{\|\cdot\|_u}.\]
When $G$ is {\em amenable}, $C^*(G)$ is isomorphic to $C^*_{\lambda}(G)$. For a detailed discussion of this construction and its consequences, see \cite[Ch. VII]{Dav96} or \cite[13.9 (p.303)]{Dix77}.

Once we have constructed $C^*(G)$, we observe that every irreducible representation of $C^*(G)$ is in (dimension preserving) one-to-one correspondence with irreducible unitary representations of $G$ \cite[Ch. VII]{Dav96}. Thus, we may export the topology of $\wh{C^*(G)}$ to $\wh{G}$ via this bijection, which is to say $\wh{C^*(G)}\approx \wh{G}$. In particular, $\wh{C^*(G)}_n\approx \wh{G}_n$ for each $n$.


\subsection{Concrete versus Abstract Representations}\label{subsec:concrete_versus_abstract}

We want to analyze the spectrum of a $C^*$-algebra using sequences of representations concretely realized as matrices on a shared Hilbert space. We need tools from the ``third definition of the topology of the spectrum" (Section 3.5 in \cite{Dix77}) so that we may draw conclusions about convergence in $\wh{A}$ from convergence of associated matrices.
 
We fix the {\em standard Hilbert space of dimension $n$}, denoted by $\mc{H}_n$, for $n\in\zn_{>0}$ and let $\Repn(A)$ be the set of representations of $A$ on $\hn$ where $\Repnp(A)\sse\Repn(A)$ are those nondegenerate representations. Similarly, we let $\Repn(G)$ be the set of representations of $G$ on $\hn$ (these are automatically nondegenerate). Set $\Irrn(A)\sse\Repn(A)$ to mean the set of non-zero {\em irreducible} representations of $A$ on $\hn$ and define $\Irrn(G)$ analogously in $\Repn(G)$. In particular, when we are using this notation, we are viewing the representation as a family of $n\times n$ matrices | one for each element in either $A$ or $G$, as appropriate. This is in contrast with the abstract view of representations in $\wh{A}$ or $\wh{G}$. 

We topologize $\Repn(A)$ by weak pointwise convergence over $A$; that is, $\pi_k\rightarrow\pi$ for $\pi_k,\pi\in\Repn(A)$ means
\[\langle \pi_k(a)\xi,\eta\rangle_{\hn}\rightarrow \langle \pi(a)\xi,\eta\rangle_{\hn}\quad\te{for any }a\in A,\xi,\eta\in\hn.\]
\cite[3.5.2 (p.80)]{Dix77} shows this is equivalent to strong pointwise convergence over $A$: if $\pi_k\rightarrow\pi$ strongly in $\Repn(A)$, then 
\[\|\pi_k(a)\xi-\pi(a)\xi\|_{\hn}\rightarrow 0\te{ for all }a\in A,\xi\in\hn.\]
Of course, we have $\Irrn(A),\Repnp(A)\sse \Repn(A)$ and so we topologize $\Irrn(A)$ and $\Repnp(A)$ as well. We define the same notion of convergence on $\Repn(G)$ and $\Irrn(G)$ (although those matrices are in $\usf(\hn)$ instead of $\bsf(\hn)$). The impact of these sequences on the topology of $\wh{A}$ and $\wh{G}$ is discussed in Section \ref{subsec:spectrum_liminal_Cstar}.

Although our ultimate goal is to analyze $\wh{G}$ (and thus, $\wh{C^*(G)}$ by the discussion above), interrogating sequences in $\usf(\hn)$ has two clear benefits: Hausdorffness and entrywise convergence. Once we establish a meaningful link between sequences in $\usf(\hn)$ and related sequences in $\wh{G}$, we will be able to leverage these qualities of $\usf(\hn)$ to produce useful results. But first, we need to show there is coherence in our notion of convergence of the several spaces of interest. The following proposition justifies passing between convergence on $\usf(\hn)$, $\Repn(G)$, and $\Repnp(C^*(G))$ without comment.

\begin{prop}[\cite{Dix77} 18.1.9 (p.355)]\label{prop:facts_list}\mbox{}
        For each $\sigma\in\Repn(G)$, let $\tilde{\sigma}$ be the corresponding element in $\Repn(C^*(G))$. Then following are equivalent (when $G$ is discrete): 
            \begin{enumerate}
                \item $\pi_{\lambda}\rightarrow\pi$ in $\Repn(G)\sse\usf(\hn)$
                \item $\tilde{\pi}_{\lambda}\rightarrow\tilde{\pi}$ in $\Repn(\cstarg)\sse\bsf(\hn)$
                \item $|\langle \pi_{\lambda}(s)\xi,\eta\rangle_{\hn}-\langle\pi(s)\xi,\eta\rangle_{\hn}|\rightarrow0$ for all $s\in G$, $\xi,\eta\in\hn$
                \item $\normspace{\hn}{\pi_{\lambda}(s)\xi-\pi(s)\xi}\rightarrow0$ for all $s\in G$, $\xi\in\hn$
    \end{enumerate}
\end{prop}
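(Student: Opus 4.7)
My plan is to split the four-way equivalence into smaller pieces, handling the ``easy'' equivalences first and then concentrating on the genuine content, which is the passage between convergence on the group $G$ and convergence on the $C^*$-algebra $C^*(G)$.

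First I would observe that (3) $\Leftrightarrow$ (1) is essentially the definition of the weak pointwise topology already imposed on $\Repn(G)$ in the preceding subsection, and (3) $\Leftrightarrow$ (4) is the cited result \cite[3.5.2]{Dix77} applied to the unitary matrices $\pi_\lambda(s)$ (where uniform boundedness by $1$ lets weak pointwise convergence on a finite set of vectors upgrade to strong pointwise convergence). So the real work is the equivalence of (2) with the group-side conditions. Since $G$ is discrete, the Haar measure is counting measure and $L^1(G)$ sits inside $C^*(G)$ with the point masses $\delta_s$ satisfying $\tilde{\sigma}(\delta_s) = \sigma(s)$ for any $\sigma \in \Repn(G)$. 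This gives the implication (2) $\Rightarrow$ (4) for free: strong convergence of $\tilde{\pi}_\lambda \to \tilde{\pi}$ at the element $\delta_s \in C^*(G)$ is literally the statement $\|\pi_\lambda(s)\xi - \pi(s)\xi\|_{\hn} \to 0$.

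The substantive direction is (4) $\Rightarrow$ (2). The plan is to use a standard three-step density argument. Step one: on finitely supported functions $f = \sum_{s \in F} c_s \delta_s$, we have $\tilde{\pi}_\lambda(f)\xi = \sum_{s \in F} c_s \pi_\lambda(s)\xi$, a finite sum of terms each converging in norm by (4), so $\tilde{\pi}_\lambda(f)\xi \to \tilde{\pi}(f)\xi$. Step two: extend to $L^1(G)$ using the uniform bound $\|\tilde{\sigma}(f)\| \leq \|f\|_1$, which holds for any unitary representation. Step three: extend to $C^*(G)$ by the same uniform bound (now $\|\tilde{\sigma}(a)\| \leq \|a\|_u$) together with density of the image of $L^1(G)$ in $C^*(G)$. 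A standard $3\varepsilon$ argument handles each extension: approximate $a$ by $f \in L^1(G)$ in the $C^*$-norm, apply the finite-support estimate to a finitely supported approximant of $f$, and bound the two tails uniformly in $\lambda$.

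The main obstacle I anticipate is purely bookkeeping: making the $3\varepsilon$ argument clean across two density extensions, and verifying that uniform boundedness of the family $\{\tilde{\pi}_\lambda\}$ indeed comes for free from $\|\tilde{\pi}_\lambda\| \leq 1$ on $C^*(G)$ (every $\tilde{\sigma}$ is contractive, so no equicontinuity assumption is needed). I do not see any conceptual subtlety beyond this, given that the excerpt already establishes the bijection between unitary representations of $G$ and nondegenerate representations of $C^*(G)$ together with the correspondence of topologies on $\Repn(A)$ in terms of weak vs.\ strong pointwise convergence.
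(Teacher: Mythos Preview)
Your argument is correct and is the standard route to this result. Note, however, that the paper does not supply its own proof of this proposition: it is stated with a citation to \cite[18.1.9]{Dix77} and used as a black box, so there is no in-paper argument to compare against. Your outline is essentially the proof one finds in Dixmier, and the only point worth tightening is that for (3) $\Leftrightarrow$ (4) you should invoke the standard fact that weak and strong operator convergence coincide on uniformly bounded nets of operators in $\bsf(\hn)$ (or just observe that $\hn$ is finite dimensional here), rather than appealing to \cite[3.5.2]{Dix77}, which is phrased for representations of a $C^*$-algebra rather than for a single unitary evaluated at a fixed $s$.
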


We also connect convergence in $\Irrn(A)$ to convergence in $\wh{A}_n$ under special circumstances.

\begin{thm}[\cite{Dix77} 3.5.8 (p.83)]\label{thm:Irrn(A)_to_A_n}
The canonical map $\Irrn(A)$ onto $\wh{A}_n$ given by $\pi\mapsto[\pi]$ is continuous and open.
\end{thm}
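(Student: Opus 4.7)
The map $q \colon \Irrn(A) \to \wh{A}_n$, $\pi \mapsto [\pi]$, is the orbit map for the conjugation action of $\usf(\hn)$ on $\Irrn(A)$ given by $U \cdot \pi = U \pi U^*$. I would verify continuity and openness as separate statements, in each case exploiting the pullback description of the Fell topology from the Jacobson topology on $\Prim(A)$.

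For continuity, I would first note that an open subbasis for the Jacobson topology consists of the sets $U_a = \{J \in \Prim(A) \colon a \notin J\}$ for $a \in A$, since every Jacobson-closed set has the form $\{J \colon I \subseteq J\}$ and its complement is a union of such $U_a$. The preimage of $U_a$ under the composition $\Irrn(A) \to \wh{A}_n \to \Prim(A)$ is $\{\pi \in \Irrn(A) \colon \pi(a) \neq 0\}$, whose complement $\{\pi \colon \pi(a) = 0\}$ is closed in the weak pointwise topology: if $\pi_\alpha(a) = 0$ for every $\alpha$ and $\pi_\alpha \to \pi$, then each matrix entry $\langle \pi(a)\xi,\eta\rangle = \lim_\alpha \langle \pi_\alpha(a)\xi,\eta\rangle$ vanishes, so $\pi(a) = 0$. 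Continuity of $q$ follows.

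For openness, I would recast the statement as a lifting property: for every net $[\rho_\alpha] \to [\pi]$ in $\wh{A}_n$, I must produce representatives $\rho_\alpha' \in [\rho_\alpha]$ (for $\alpha$ eventually) with $\rho_\alpha' \to \pi$ in $\Irrn(A)$, so that any preassigned weak-pointwise open neighborhood $V \ni \pi$ in $\Irrn(A)$ eventually meets the equivalence class $[\rho_\alpha]$. My approach would invoke the vector-state characterization of Fell convergence from \cite{Dix77} Section 3.4: $[\rho_\alpha] \to [\pi]$ is equivalent to requiring that for every vector state $\omega_\xi \circ \pi$ of $A$, every finite $F \subseteq A$, and every $\varepsilon > 0$, eventually there exists a vector state $\omega_{\eta_\alpha} \circ \rho_\alpha$ within $\varepsilon$ of $\omega_\xi \circ \pi$ on $F$. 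Applying this simultaneously to the standard basis $\{e_i\}$ of $\hn$ and to the sums $e_i + e_j$, $e_i + \sqrt{-1}\, e_j$ (so as to recover all matrix coefficients by polarization), I would obtain approximating vectors in $\hn$, orthonormalize them via Gram-Schmidt, and define $U_\alpha$ as the unitary identifying the resulting basis with $\{e_i\}$. Then $U_\alpha \rho_\alpha U_\alpha^*$ converges to $\pi$ entrywise, and these are the desired representatives.

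The main obstacle is the Gram-Schmidt and basis-identification step: ensuring that approximate vector-state data can be promoted to an honest orthonormal basis of $\hn$ whose matrix entries for $\rho_\alpha$ are uniformly close to those of $\pi$ in the standard basis. The essential leverage is the finite dimensionality of $\hn$: compactness of $\usf(\hn)$ allows passage to convergent subsequences when needed to stabilize the choice of unitary, and finitely many matrix entries fully determine the representation, so entrywise convergence translates directly to weak pointwise convergence in $\Irrn(A)$.
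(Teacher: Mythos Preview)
The paper does not supply its own proof of this statement; it is quoted verbatim from Dixmier \cite[3.5.8]{Dix77} and invoked as a black box. There is therefore nothing in the paper to compare your argument against.

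On the merits of your sketch: the continuity half is clean and correct. The openness half has the right architecture --- reformulating openness as a net-lifting property and invoking the vector-state description of Fell convergence is exactly the strategy Dixmier uses --- but the step you flag as the ``main obstacle'' is genuinely where the work lies, and your outline does not yet close it. Approximating the diagonal states $\omega_{e_i}\circ\pi$, $\omega_{e_i+e_j}\circ\pi$, $\omega_{e_i+ie_j}\circ\pi$ separately gives you, for each such state, \emph{some} vector in $\hn$ realizing an approximating state for $\rho_\alpha$; it does not by itself force the vector approximating $e_i+e_j$ to be close to the sum of the vectors approximating $e_i$ and $e_j$. Without that coherence, polarization does not recover the off-diagonal matrix entries, and Gram--Schmidt has nothing to act on. The missing ingredient is to use irreducibility of $\pi$ (via Kadison transitivity or an approximate matrix-unit argument) to find elements of $A$ that witness the inner products $\langle\eta_\alpha^{(i)},\eta_\alpha^{(j)}\rangle$, forcing the approximating vectors to be nearly orthonormal before you orthonormalize. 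Dixmier handles this through the machinery of \S3.4--3.5 (pure states and their GNS representations), and a self-contained write-up would need to reproduce that step.
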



\begin{rmk}\label{rmk:hausdorff_justification}
Synthesizing this discussion, we conclude that if $\pi_k\rightarrow\pi$ in $\Irrn(G)$, then $[\pi_k]\rightarrow [\pi]$ in $\wh{G}$. Conversely, if $\dim \pi_k=\dim\pi=n$, then $[\pi_k]\rightarrow[\pi]$ in $\wh{G}$ implies $\pi_k\rightarrow\pi$ in $\Irrn(G)$. Because $\usf(\hn)$ is Hausdorff for every cardinal $n$, this implies that if sequences converge in $\Irrn(G)\sse\usf(\hn)$, then the limit is unique. However, $\Irrn(G)$ is not closed in general and a sequence $\{\pi_k\}\sse\Irrn(G)$ can converge to the direct sum of multiple pairwise inequivalent irreducible representations of $G$ (which would mean that, on the level of $\wh{G}$, the sequence possibly converges to distinct elements | see Proposition \ref{prop:sequence_to_subrepresentations}). It is this lack of Hausdorffness which presents the difficulty in studying the unitary dual. One small consolation is that ${}_n\wh{A}$ is closed in $\wh{A}$ in the Fell topology (\cite[3.6.3 (p.85)]{Dix77}) so these distinct elements will always have dimension strictly less than $n$.
\end{rmk}

Before we proceed to discussing abstract group representations, we quickly verify that $\Repn(G)$ is closed in $\usf(\hn)$ when $G$ has a finite presentation.

\begin{lemma}\label{lemma:limit_representations_is_representation}
    Suppose $G$ is a finitely presented group and $\{\pi_k\}\sse\Repn(G)$ and $\pi_k\rightarrow \pi$ in $\usf(\mc{H}_n)$. Then 
    $\pi\in \Repn(G)$.
\end{lemma}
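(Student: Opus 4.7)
The plan is to exploit the finite presentation $G = \langle s_1, \ldots, s_m \mid r_1, \ldots, r_\ell \rangle$ to reduce the verification that $\pi$ is a representation to a finite check on the generators and defining relations. First, since $\hn$ is finite-dimensional, the weak, strong, and norm operator topologies on $\bsf(\hn)$ all coincide on bounded sets, so the pointwise convergence $\pi_k(g) \to \pi(g)$ from the hypothesis is in fact norm convergence for each $g \in G$. Because the equation $U^*U = I$ cuts out $\usf(\hn)$ as a closed subset of $\bsf(\hn)$, the candidate limit $\pi(g)$ automatically lies in $\usf(\hn)$.

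The next step is to verify that $\pi$ is a group homomorphism. The cleanest route uses the universal property of the finite presentation: setting $U_i := \pi(s_i) = \lim_k \pi_k(s_i)$, each $\pi_k$ satisfies $r_j(\pi_k(s_1), \ldots, \pi_k(s_m)) = I$ by virtue of being a representation, and taking limits then gives $r_j(U_1, \ldots, U_m) = I$ for every $j$ --- here one uses joint continuity of matrix multiplication and of inversion on $\usf(\hn)$, both of which are automatic in finite dimensions. The universal property then supplies a group homomorphism $\tilde{\pi}: G \to \usf(\hn)$ with $\tilde{\pi}(s_i) = U_i$. To identify $\tilde{\pi}$ with $\pi$, express each $g \in G$ as a word $s_{i_1}^{\varepsilon_1} \cdots s_{i_N}^{\varepsilon_N}$; applying the homomorphism property of each $\pi_k$ and then taking pointwise limits yields $\pi(g) = U_{i_1}^{\varepsilon_1} \cdots U_{i_N}^{\varepsilon_N} = \tilde{\pi}(g)$, so $\pi = \tilde{\pi}$ lies in $\Repn(G)$.

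The main (and essentially only) obstacle is to ensure that joint continuity of multiplication and inversion is legitimately applied under the pointwise limit, but since $\hn$ is finite-dimensional these operations are manifestly continuous. The role of the finite presentation hypothesis is to reduce what would otherwise be infinitely many homomorphism conditions to a finite, checkable list of relation equations on the generator matrices $U_i$.
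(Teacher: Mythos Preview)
Your proof is correct and follows essentially the same line as the paper's: fix a finite presentation, observe that each defining relation $r_j$ evaluates to the identity under every $\pi_k$, and use continuity of multiplication and inversion on $\usf(\hn)$ to pass to the limit. The paper stops once the relations $r_j(\pi(s_1),\ldots,\pi(s_m))=I$ are verified, whereas you take the extra step of invoking the universal property to build $\tilde\pi$ and then identify $\tilde\pi$ with $\pi$; this is a bit more careful but not a genuinely different route.
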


\begin{proof}
    Let $G=\langle g_1,...,g_s\,|\,r_1=\cdots=r_t=e\rangle$ be a presentation of $G$.

    Suppose $r_{\ell}=g_{\sigma(1)}^{f_1}g_{\sigma(2)}^{f_2}\cdots g_{\sigma(m)}^{f_m}$ ($1\leq \ell\leq t$) for $\sigma$ a permutation of the set $\{1,2,...,m\}$ and $f_j\in\zn\slash\{0\}$ ($1\leq j\leq m$). Then, for each $k\in\zn_{>0}$,
    \[
    I_n=\pi_k(r_{\ell})=\pi_k(g_{\sigma(1)})^{f_1}\pi_k(g_{\sigma(2)})^{f_2}\cdots\pi_k(g_{\sigma(m)})^{f_m}\]
    where $I_n$ is the $n\times n$ identity matrix.
    Thus, as $k\rightarrow\infty$,
    \[I_n=\pi(r_{\ell})=\pi(g_{\sigma(1)})^{f_1}\pi(g_{\sigma(2)})^{f_2}\cdots\pi(g_{\sigma(m)})^{f_m}.\]
    Because we have only a finite number of generators and relations, we may arrange for this to hold simultaneously for all $r_{\ell}$, $1\leq \ell\leq t$. Hence, $\pi\in\Repn(G)$.
\end{proof}


\subsection{Inducing Group Representations}

The source for this subsection is \cite[Ch. 2]{KanTay13}. Let $G$ be a discrete group and $H\leq G$. We will need to extend a representation of $H$ to a representation of $G$ in a ``natural" way. This is achieved by induced representations.

\begin{defn}\label{defn:induced_representation}
    Let $\pi$ be a representation of $H$ on a Hilbert space $\mc{H}_{H}$. We define the {\em induced Hilbert space} by
    \begin{align*}
    \mc{H}_{H}^G:=\{\xi:G\rightarrow\mc{H}_{H}\,|\,\xi(xh)=\pi(h^{-1})\xi(x),&\te{ for }x\in G,h\in H\\
    &\te{ and }\sum_{xH\in G\slash H}\|\xi(x)\|^2<\infty\}.
    \end{align*}
    The {\em induced representation} $\te{ind}_H^G\,\pi$ is the representation of $G$ on $\mc{H}_{H}^G$ defined by
    \[\te{ind}_H^G\,\pi(x)\xi(y):=\xi(x^{-1}y)\quad\te{ for }x,y\in G,\xi\in\mc{H}_{H}^G.\]
\end{defn}

\noindent $\mc{H}_{H}^G$ is isometrically isomorphic to the Hilbert space $\bigoplus_{x H\in G\slash H}\mc{H}_{H}$ via $\xi\mapsto (\xi(xH))_{xH\in G\slash H}$. Consequently, when $\mc{H}_{H}$ is finite dimensional and $[G:H]<\infty$, $\mc{H}_{H}^G$ is finite dimensional.

Combining results in Sections 2.6, 2.7 from \cite{KanTay13}:

\begin{prop}\label{prop:induction_facts}
    Suppose $G$ is a discrete group with $H\leq G$. 
    \begin{enumerate}
        \item Let $\{\pi_{\lambda}\}_{\lambda\in\Lambda}$ be a family of unitary representations of $H$. Then
        \[\textnormal{ind}_H^G\,\l(\bigoplus_{\lambda\in\Lambda}\pi_{\lambda}\r)\simeq\bigoplus_{\lambda\in\Lambda}\textnormal{ind}_H^G\,\pi_{\lambda}.\] \label{prop:induction_direct_sums}
        \item If $\pi$ is a representation of $H$ and $\textnormal{ind}_H^G\,\pi$ is irreducible, then $\pi$ is irreducible. \label{prop:induction_irreducible_representation}
        \item If $K\leq H$ and $\pi$ is a unitary representation of $K$, then \[\textnormal{ind}_K^G\,\pi\simeq\textnormal{ind}_H^G\,\textnormal{ind}_K^H\,\pi.\]\label{prop:induction_in_stages}
    \end{enumerate}
\end{prop}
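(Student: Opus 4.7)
The plan is to establish each part by writing down an explicit unitary (or invariant closed subspace) and checking the covariance, action, and norm compatibilities dictated by Definition \ref{defn:induced_representation}. All three parts follow the same general pattern: interpret elements of the induced Hilbert spaces as functions on $G$ with prescribed equivariance, and move structure through using the defining formula $\mathrm{ind}_H^G\,\pi(x)\xi(y)=\xi(x^{-1}y)$.

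For part (\ref{prop:induction_direct_sums}), I would use the componentwise identification. Given $\xi:G\to\bigoplus_{\lambda}\mc{H}_{\lambda}$ in the induced space of $\bigoplus_\lambda\pi_\lambda$, write $\xi(x)=(\xi_\lambda(x))_\lambda$. The equivariance $\xi(xh)=(\bigoplus_\lambda\pi_\lambda)(h^{-1})\xi(x)$ decouples into $\xi_\lambda(xh)=\pi_\lambda(h^{-1})\xi_\lambda(x)$, so each $\xi_\lambda$ lies in $\mc{H}_\lambda^G$. The square-summability on cosets matches up by the Pythagorean identity, giving an isometric isomorphism. Since the induced action is by left translation of the argument, which is independent of $\lambda$, this isomorphism intertwines $\mathrm{ind}_H^G(\bigoplus_\lambda\pi_\lambda)$ and $\bigoplus_\lambda\mathrm{ind}_H^G\pi_\lambda$.

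Part (\ref{prop:induction_irreducible_representation}) I would prove by contrapositive. Suppose $\mc{V}\subsetneq\mc{H}_H$ is a nontrivial closed $\pi$-invariant subspace. Set $\mc{V}^G:=\{\xi\in\mc{H}_H^G:\xi(x)\in\mc{V}\text{ for all }x\in G\}$. This is closed, and invariance under $\mathrm{ind}_H^G\,\pi$ is immediate because $(\mathrm{ind}_H^G\pi(x)\xi)(y)=\xi(x^{-1}y)\in\mc{V}$. Both nonzeroness and properness are verified by picking a set $T$ of left-coset representatives for $G/H$, fixing $v\in\mc{V}\setminus\{0\}$ and $w\in\mc{H}_H\setminus\mc{V}$, and defining $\xi_v$ (resp.\ $\xi_w$) by prescribing the value $v$ (resp.\ $w$) at the identity coset, $0$ elsewhere on $T$, and then extending via the equivariance rule. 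Then $\xi_v\in\mc{V}^G\setminus\{0\}$ while $\xi_w\notin\mc{V}^G$, so $\mathrm{ind}_H^G\pi$ is reducible.

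Part (\ref{prop:induction_in_stages}) is the main obstacle, and is where I expect the bookkeeping to be heaviest. The candidate unitary is
\[
\Phi:\mc{H}_K^G\longrightarrow(\mc{H}_K^H)^G,\qquad (\Phi\xi)(x)(h):=\xi(xh),\qquad x\in G,\;h\in H.
\]
The verification breaks into four separate checks: (i) for fixed $x\in G$, the function $h\mapsto\xi(xh)$ satisfies the $K$-equivariance needed to land in $\mc{H}_K^H$; (ii) as a function of $x$, $\Phi\xi$ satisfies the $(\mathrm{ind}_K^H\pi)$-equivariance needed to land in $(\mc{H}_K^H)^G$; (iii) $\Phi$ is norm-preserving, which follows from the bijection between $G/K$ and the pairs $(xH,hK)$ with $xH\in G/H$, $hK\in H/K$, combined with Fubini on a double sum of nonnegative terms; and (iv) $\Phi$ intertwines $\mathrm{ind}_K^G\pi$ with $\mathrm{ind}_H^G\mathrm{ind}_K^H\pi$, which reduces to the identity $\xi(g^{-1}xh)=\xi(g^{-1}xh)$ after unwinding both definitions. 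Surjectivity is seen by writing down the inverse $\eta\mapsto\xi$ with $\xi(x):=\eta(x)(e)$ and checking, using the equivariance of $\eta$, that this produces an element of $\mc{H}_K^G$. The only real subtlety is keeping the cosets and equivariance identities straight; once the four checks above are arranged in order, each reduces to a one-line manipulation.
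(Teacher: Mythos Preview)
Your proposal is correct; each of the three verifications is the standard direct argument and the details you outline go through without issue. The paper itself does not supply a proof of this proposition at all: it simply records the statement with the preface ``Combining results in Sections 2.6, 2.7 from \cite{KanTay13}'' and moves on. So your write-up is strictly more detailed than what appears in the paper, but it is exactly the kind of argument one finds in the cited reference.
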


\subsection{Crystallographic groups and the Mackey Machine}\label{subsec:crystallography_group_and_Mackey}

Crystallographic groups are discrete virtually abelian groups. As such, these groups are particularly well served by the so-called ``Mackey Machine" originally constructed by Mackey (\cite{Mac58}),  a method by which representations may be generated in a near systematic fashion. Yet, even with this powerful tool, it can still be difficult to fully realize all elements of $\wh{G}$. We address this obstacle in Section \ref{subsec:the_program}.

\begin{defn}\label{defn:crystallography_group}
    We say that $G$ is an {\em crystallographic group of dimension $r$} if it is discrete and fits into a short exact sequence of the form
    \[1\rightarrow N\stackrel{i}{\rightarrow} G\stackrel{q}{\rightarrow} D\rightarrow 1\]
    where $N\cong \zn^r$ is maximally abelian in $G$ and $D$ is a finite group.

    $N$ is called the {\em lattice} or {\em translation group} and $D$ the {\em point group}.
\end{defn}

Let $G$ be an $r$-dimensional crystallographic group with lattice $N$ and point group $D$. There is a natural action of $G$ on $N$ defined by $g\cdot n=gng^{-1}$ for all $g\in G$ and $n\in N$. Let $D=G\slash N$ and choose a section $\gamma:D\rightarrow G$ such that $\gamma(1_D)=1_G$. We fix this $\gamma$ for the remainder of the document. Define an action of $D$ on $N$ by $x\cdot n=\gamma(x)\cdot n$. This action is independent of the choice of section as $N$ is abelian. 

It is well known that $\wh{N}\cong \tn^r$ when $N\cong \zn^r$. Hence, we induce an action of $G$ on $\wh{N}$ by
$$g\cdot \chi(n)=\chi(g^{-1}\cdot n)$$
for all $g\in G, \chi\in\wh{N}$, $n\in N$. For each $\chi\in \wh{N}$, we define the {\em stabilizer subgroup associated to $\chi$} by
\[G_{\chi}=\{g\in G:g\cdot \chi=\chi\}\]
and the {\em orbit associated to $\chi$} by 
\[\mc{O}_{\chi}=\{g\cdot\chi:g\in G\}.\]
Of course, $|\mc{O}_{\chi}|={|G\slash G_{\chi}|}$. We note that $N\leq G_{\chi}$ and $|\mc{O}_{\chi}|$ divides $|D|$ for all $\chi\in\wh{N}$.

\begin{rmk}\label{rmk:restriction_induced_reps}
    Suppose $N\leq H\leq K\leq G$ and $\sigma\in\wh{H}$ satisfies ${\sigma|}_N=\chi^{\oplus \dim\sigma}$. Then $$(\on{ind}_H^K\sigma)\big|_N=\bigoplus_{k\in K/H}(k\cdot\chi)^{\oplus\dim\sigma}.$$ In particular, \begin{enumerate}
        \item $(\te{\normalfont{ind}}_{G_{\chi}}^G\sigma)\big|_N=\bigoplus_{g\in G\slash G_{\chi}}(g\cdot \chi)^{\oplus\dim\sigma}$,

        \item if $H\leq G_{\chi}$, then $(\on{ind}_H^{G_{\chi}}\sigma)\big|_N=\chi^{\oplus\dim\sigma}$.
    \end{enumerate}
\end{rmk}

For the rest of the document we use the notation $D_{\bullet}=G_{\bullet}\slash N$ where $N\leq G_{\bullet}\leq G$. In particular, if $\chi\in\wh{N}$, $D_{\chi}=G_{\chi}\slash N$. We now have the notation needed to present the Mackey Machine (our specific phrasing and construction is from \cite[Thm~4.28]{KanTay13}).

Let $\Omega\sse\wh{N}$ intersect each orbit of $G$ exactly once and define $\wh{G}_{\chi}^{(\chi)}$ to be the subset $\sigma\in\wh{G}_{\chi}$ where there exists $m\in\zn_{>0}$ such that
\[\sigma\big|_N=\chi^{\oplus m}.\]

\begin{thm}[Mackey Machine]\label{thm:Mackey_machine}
    $\wh{G}=\l\{\te{\normalfont{ind}}_{G_{\chi}}^G\sigma\,:\,\sigma\in \wh{G}_{\chi}^{(\chi)},\chi\in\Omega\r\}$
\end{thm}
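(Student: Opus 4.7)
The plan is to establish both containments in the set equality. For $\supseteq$, I would show every induced representation appearing on the right hand side is irreducible (and hence defines an element of $\wh{G}$). For $\subseteq$, I would start with an arbitrary $\pi\in\wh{G}$, decompose $\pi|_N$ into characters, identify the $G$-orbit on which those characters live, and reconstruct $\pi$ as an induction from the stabilizer.

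\textbf{Step 1 (Induced representations are irreducible).} Fix $\chi\in\Omega$ and $\sigma\in\wh{G}_\chi^{(\chi)}$. I would invoke the Mackey irreducibility criterion applied to the inclusion $G_\chi\leq G$: $\text{ind}_{G_\chi}^G\sigma$ is irreducible iff for each $g\in G\setminus G_\chi$ the representations $\sigma$ and $\sigma^g(h):=\sigma(g^{-1}hg)$ of $G_\chi\cap gG_\chi g^{-1}$ share no subrepresentation. Since $N$ is normal in $G$, it lies in this intersection, and restriction to $N$ gives $\sigma|_N=\chi^{\oplus m}$ versus $\sigma^g|_N=(g\cdot\chi)^{\oplus m}$. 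Because $g\notin G_\chi$ means $g\cdot\chi\neq\chi$ in $\wh{N}$, these $N$-restrictions have disjoint spectra, so $\sigma$ and $\sigma^g$ must also be disjoint.

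\textbf{Step 2 (Every irreducible representation has this form).} Let $\pi\in\wh{G}$ act on $\mc{H}$, finite dimensional by Moore's theorem. Decompose $\pi|_N$ into its spectral isotypic pieces $\mc{H}=\bigoplus_{\chi'\in S}\mc{H}_{\chi'}$ for a finite set $S\sse\wh{N}$. A quick computation gives $\pi(g)\mc{H}_{\chi'}=\mc{H}_{g\cdot\chi'}$, so $G$ permutes $\{\mc{H}_{\chi'}\}_{\chi'\in S}$. Irreducibility of $\pi$ forces $G$ to act transitively on $S$; hence $S$ is a single $G$-orbit, meeting $\Omega$ at a unique $\chi$. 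Then $\sigma:=\pi|_{G_\chi}$ acting on $\mc{H}_\chi$ is a representation of $G_\chi$ with $\sigma|_N=\chi^{\oplus\dim\mc{H}_\chi}$. Irreducibility of $\sigma$ follows because any $G_\chi$-invariant subspace $\mc{V}\sse\mc{H}_\chi$ would generate the $G$-invariant subspace $\sum_{gG_\chi\in G/G_\chi}\pi(g)\mc{V}$ of $\mc{H}$, which must be $\{0\}$ or $\mc{H}$, forcing $\mc{V}\in\{0,\mc{H}_\chi\}$. To identify $\pi$ with $\text{ind}_{G_\chi}^G\sigma$, I would build the intertwiner $T:\mc{H}_\chi^G\to\mc{H}$ by $T\xi=\sum_{gG_\chi}\pi(g)\xi(g)$, using coset representatives and the decomposition $\mc{H}=\bigoplus_{gG_\chi\in G/G_\chi}\pi(g)\mc{H}_\chi$, and then verify $T$ is a unitary $G$-equivariant isomorphism.

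\textbf{Main obstacle.} The delicate part is Step 2: ensuring the spectral decomposition $\mc{H}=\bigoplus_{\chi'}\mc{H}_{\chi'}$ is really permuted by $G$ in the expected way (so that irreducibility transfers cleanly from $\pi$ to $\sigma$), and then exhibiting the explicit equivariant isomorphism with $\text{ind}_{G_\chi}^G\sigma$. Everything else---the Mackey irreducibility criterion in Step 1, the orbit bookkeeping, and the observation that $N$ abelian and normal forces a character decomposition of $\pi|_N$---is relatively routine once the framework is in place, but the imprimitivity-style identification in Step 2 is what really does the work.
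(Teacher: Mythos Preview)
Your outline is correct and follows the standard Clifford--Mackey argument (spectral decomposition of $\pi|_N$, transitivity of the $G$-action on isotypic components, identification with an induced representation via an explicit intertwiner, and the Mackey irreducibility criterion for the reverse containment). The paper, however, does not prove this theorem at all: it is stated as a classical result with a citation to \cite[Thm~4.28]{KanTay13}, and the paper's contribution begins downstream of it, using the Mackey Machine as a black box to build the procedure in Section~\ref{subsec:the_program}. So there is nothing to compare your argument against in the paper itself; what you have sketched is essentially the proof one finds in the cited reference.

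One small point worth tightening in your Step~2: when you write ``$\sigma:=\pi|_{G_\chi}$ acting on $\mc{H}_\chi$,'' you are implicitly using that $\mc{H}_\chi$ is $G_\chi$-invariant (which follows from $\pi(g)\mc{H}_{\chi'}=\mc{H}_{g\cdot\chi'}$ with $g\in G_\chi$), and in the irreducibility argument you should note that the sum $\sum_{gG_\chi}\pi(g)\mc{V}$ is in fact a direct sum because the summands live in the pairwise orthogonal subspaces $\mc{H}_{g\cdot\chi}$---this is what lets you read off $\mc{V}=\mc{H}_\chi$ from the sum being all of $\mc{H}$. These are routine but worth making explicit if you write the proof out in full.
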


This provides a complete set-theoretic description of $\wh{G}$ and the induction process, once $\sigma\in\wh{G}_{\chi}^{(\chi)}$ is found, is straightforward and systematic. Finding this $\sigma$, however, is more opaque. In the case where $G_{\chi}=G$, we arrive at a situation where, to determine an irreducible representation of $G$, we already need to know the irreducible representations of $G_{\chi}=G$. Naturally, we hope to avoid this type of circular procedure. To know how to systematically approach finding these $\sigma$, we need to discuss so-called {\em unitary projective representations}.



\section{Systematically Generating Irreducible Representations of Crystallographic Groups}\label{sec:representations_of_crystallography_groups}

In this section, we provide our procedure for producing irreducible representations of a crystallographic group. Observe that the arguments used here apply equally well to discrete, finitely generated, virtually abelian groups (that is, abelian-by-finite groups). However, we focus on crystallographic groups. We begin with (unitary) projective representations, present the procedure, and then investigate how this procedure generates sequences of irreducible representations of $G$.

\subsection{Unitary Projective Representations}\label{subsec:unitary_projective_representations}

The next several definitions and results come from Chapters 6 and 7 in \cite{CST22}. As we shall see, projective representations are the bridge between finite groups and crystallographic groups. For this section, let $H$ be a finite group with trivial element $e_H$ and $\mc{A}$ be an abelian group with trivial element $e_{\mc{A}}$.

\begin{defn}
     A {\em 2-cocycle} or simply {\em cocycle} for the pair $(H,\mc{A})$ is a function $\omega:H\times H\rightarrow\mc{A}$ such that
    \begin{align}
    \omega(xy,z)\omega(x,y)&=\omega(x,yz)\omega(y,z)\\
    \omega(e_H,h)&=e_{\mc{A}}=\omega(h,e_H)
    \end{align}
    for all $x,y,z,h\in H$. Let $\msc{C}^2(H,\mc{A})$ be the set of all 2-cocycles relative to $(H,\mc{A})$. Under pointwise multiplication $\msc{C}^2(H,\mc{A})$ has the structure of an abelian group. When $\mc{A}=\T$ is the circle group, we call $\omega$ a {\em unitary (2-)cocycle}.
\end{defn}

We next define an essential 2-cocycle construction: 2-coboundaries generated by a set function.

\begin{defn}\label{defn:2_coboundary}
    We say a set function $\rho:H\rightarrow \mc{A}$ is {\em normalized} if $\rho(e_H)=e_{\mc{A}}$. \textit{The 2-coboundary generated by $\rho$} is a function $\tau_{\rho}\colon H\times H\to\mc{A}$ given by $\tau_{\rho}(x,y)=\rho(xy)[\rho(x)\rho(y)]^{-1}$. A \textit{2-coboundary} is any 2-cocycle of the form $\tau_{\rho}$, for some normalized $\rho$. We let $\msc{B}^2(H,\mc{A})$ denote the set of all 2-coboundaries.

\end{defn}

We omit the proof that $\tau_{\rho}$ is indeed a 2-cocycle. We do observe that $\msc{B}^2(H,\mc{A})$ is a subgroup of $\msc{C}^2(H,\mc{A})$ since $\tau_{\rho_1\rho_2}=\tau_{\rho_1}\cdot\tau_{\rho_2}$ and $\tau_{\rho^{-1}}=(\tau_{\rho})^{-1}$.

\begin{defn}                    
    The quotient $\msc{H}^2(H,\mc{A}):=\msc{C}^2(H,\mc{A})\slash\msc{B}^2(H,\mc{A})$ is the {\em second cohomology group of $H$ with values in $\mc{A}$}. Furthermore, if the cocycles $\omega_1$ and $\omega_2$ are equivalent in $\msc{H}^2(H,\mc{A})$, then we say that $\omega_1$ is \textit{cohomologous to} $\omega_2$, and denote it by $\omega_1\sim\omega_2$. In the case $\omega\sim\mbbm{1}$ (the trivial element of $\msc{H}^2(H,\mc{A})$), we say $\omega$ is \textit{cohomologically trivial}.
\end{defn}

Fix a character $\chi\in\wh{N}$, whose stabilizer modulo $N$ is $D_{\chi}\leq D$. Define the factor set $\nu\colon D\times D\to N$, where for each $h,k\in D$,
$$\nu(h,k)=\gamma(hk)^{-1}\gamma(h)\gamma(k).$$
We define {\em the unitary 2-cocycle corresponding to $\chi$}, $\omega_{\chi}\colon D_{\chi}\times D_{\chi}\to\T$, by
$$\omega_{\chi}(h,k)=\ol{\chi}(\nu(h,k)).$$ 
We note that while the function definition and domain of $\omega_{\chi}$ may be extended to all of $D$, this extended $\omega_{\chi}$ need not remain a 2-cocycle. The cohomology class of $\omega_{\chi}$, which is independent of the choice of section $\gamma$, is called the {\em Mackey obstruction of $\chi$} (see \cite[Rmk 4.54 (p.181)]{KanTay13}. Lifting the domain from $D_{\chi}$ to $G_{\chi}$, the \textit{inflation} of $\omega_{\chi}$ will also be denoted $\omega_{\chi}\colon G_{\chi}\times G_{\chi}\to\T$ and is given by $\omega_{\chi}(x,y)=\ol{\chi}(\nu(q(x),q(y)))$ where $q:G\rightarrow D$ is the quotient map. 

\begin{defn}[\cite{CST22}, Defn 7.8]
    Let $\omega\in\msc{C}^2(H,\T)$. We say the function $\pi^P:H\rightarrow \usf(\mc{V})$ is a {\em (projective) $\omega$-representation} if 
    $$\pi^P(xy)=\omega(x,y)\pi^P(x)\pi^P(y),$$
    for all $x,y\in H$.
\end{defn}

Since $\omega$ is assumed to be normalized, all projective $\omega$-representations satisfy $\pi^P(e_H)=I_{\mc{V}}$. 

The definitions from Section \ref{subsec:spectrum_defn} introduced for ordinary representations have analogs for projective representations, such as irreducible projective representations and unitary equivalence. Denote the set of equivalence classes of irreducible projective $\omega$-representations of $H$ by $\wh{H}^{\omega}$. In particular, let $\wh{D}_{\chi}^{\eta}$ be the set of equivalence classes of irreducible $\eta$-representations of $D_{\chi}$.

\begin{prop}[\cite{CST22}, Remark 7.9(3)]
    Let $\Phi_1^P$ and $\Phi_2^P$ be an $\omega_1$-representation and an $\omega_2$-representation of $H$, respectively. There exists a (normalized) function $\lambda\colon H\to\T$ such that $\Phi_1^P(h)=\lambda(h)\Phi_2^P(h)$ for all $h\in H$ if and only if $\omega_1$ and $\omega_2$ are cohomologous. Moreover, if $\omega_1=\omega_2\cdot\tau_{\lambda}$, the map $\wh{H}^{\omega_2}\to\wh{H}^{\omega_1}$ given by $\Phi_2^P\mapsto\Phi_2^P\cdot\lambda$ is a bijection (and preserves equivalence of representations).
\end{prop}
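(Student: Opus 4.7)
The plan is to unwind definitions: both the forward and backward directions of the biconditional reduce to a single identity obtained by computing $\Phi_1^P(xy)$ two different ways. For the bijection at the end, one checks that the inverse is multiplication by $\lambda^{-1}$ and that multiplication by a scalar function preserves irreducibility and intertwiners.

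For $(\Rightarrow)$, suppose $\Phi_1^P(h)=\lambda(h)\Phi_2^P(h)$ for all $h$. Compute $\Phi_1^P(xy)$ in two ways: once using the $\omega_1$-representation property, and once using the relation to $\Phi_2^P$ followed by the $\omega_2$-representation property. These evaluate to
\[
\omega_1(x,y)\lambda(x)\lambda(y)\Phi_2^P(x)\Phi_2^P(y) \quad\text{and}\quad \lambda(xy)\omega_2(x,y)\Phi_2^P(x)\Phi_2^P(y),
\]
respectively. Since $\Phi_2^P(x)\Phi_2^P(y)$ is nonzero (as a unitary), one obtains $\omega_1(x,y)=\omega_2(x,y)\cdot\lambda(xy)[\lambda(x)\lambda(y)]^{-1}=\omega_2(x,y)\tau_\lambda(x,y)$, which is exactly the statement that $\omega_1\sim\omega_2$. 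Normalization of $\lambda$ follows from plugging in $h=e_H$ and using $\Phi_j^P(e_H)=I_{\mc{V}}$.

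For $(\Leftarrow)$, assume $\omega_1=\omega_2\cdot\tau_\lambda$ with $\lambda$ normalized, and define $\Phi_1^P(h):=\lambda(h)\Phi_2^P(h)$. The identity above reversed shows $\Phi_1^P(xy)=\omega_1(x,y)\Phi_1^P(x)\Phi_1^P(y)$, so $\Phi_1^P$ is an $\omega_1$-representation. Since $|\lambda(h)|=1$, each $\Phi_1^P(h)$ is still unitary.

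For the bijection $\wh{H}^{\omega_2}\to\wh{H}^{\omega_1}$, $\Phi_2^P\mapsto\lambda\cdot\Phi_2^P$, note that $\omega_2=\omega_1\cdot\tau_{\lambda^{-1}}$ (using $\tau_{\rho_1\rho_2}=\tau_{\rho_1}\tau_{\rho_2}$ and $\tau_{\rho^{-1}}=\tau_\rho^{-1}$ from the paragraph after Definition \ref{defn:2_coboundary}), so the same construction in the reverse direction, $\Phi_1^P\mapsto\lambda^{-1}\cdot\Phi_1^P$, provides a two-sided inverse. Preservation of irreducibility is immediate: a closed subspace $\mc{W}\subseteq\mc{V}$ is invariant under $\Phi_2^P(h)$ if and only if it is invariant under $\lambda(h)\Phi_2^P(h)$, because $\lambda(h)\in\T$ is a unit scalar. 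Preservation of equivalence is equally immediate: if $U\Phi_2^P(h)=\Psi_2^P(h)U$ for all $h$, then multiplying both sides by the scalar $\lambda(h)$ gives $U(\lambda(h)\Phi_2^P(h))=(\lambda(h)\Psi_2^P(h))U$, so $\lambda\cdot\Phi_2^P\simeq\lambda\cdot\Psi_2^P$.

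There is no real obstacle here beyond bookkeeping; the only subtlety is to be careful that $\lambda$ produced in the $(\Rightarrow)$ direction is normalized, which is automatic from $\Phi_j^P(e_H)=I_{\mc{V}}$, and that the scalar factor $\lambda(h)$ in $\T$ genuinely commutes with all operators and so does not interfere with either invariant subspaces or intertwiners.
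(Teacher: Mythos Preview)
Your argument is correct and is the standard direct verification. Note, however, that the paper does not actually supply a proof of this proposition: it is stated with a citation to \cite{CST22}, Remark 7.9(3), and left unproved. So there is no ``paper's own proof'' to compare against; your write-up would serve perfectly well as the omitted justification.
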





Given a character $\chi\colon N\to\T$ via $\chi^*\colon G_{\chi}\to\T$, we extend its domain to $G_{\chi}$ by writing $\chi^*(x):=\chi(\gamma(q(x))^{-1}x)$.

\begin{prop}[\cite{KanTay13}, Lemma 4.48]\label{prop:chi_star}
    The function $\chi^*\colon G_{\chi}\to\T$ is a 1-dimensional $\ol{\omega_{\chi}}$-representation.
\end{prop}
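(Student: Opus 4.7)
My plan is to directly verify the two defining conditions of a 1-dimensional projective $\ol{\omega_\chi}$-representation: the normalization $\chi^*(e_G)=1$ and the twisted multiplicativity
\[
\chi^*(xy) = \ol{\omega_\chi}(x,y)\,\chi^*(x)\,\chi^*(y), \qquad x,y \in G_\chi.
\]
The normalization is immediate from $\gamma(e_D)=e_G$, giving $\chi^*(e_G) = \chi(e_N)=1$. Since $\chi$ takes values in $\T$, so does $\chi^*$.

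For the twisted multiplicativity, fix $x,y\in G_\chi$ and set $h=q(x)$, $k=q(y)\in D_\chi$. Introduce the $N$-elements
\[
a = \gamma(hk)^{-1}\gamma(h)\gamma(k) = \nu(h,k), \qquad b = \gamma(h)^{-1}x, \qquad c = \gamma(k)^{-1}y,
\]
so that $\chi^*(x)=\chi(b)$, $\chi^*(y)=\chi(c)$, and $\ol{\omega_\chi}(x,y) = \chi(\nu(h,k))=\chi(a)$. Hence the right-hand side of the target identity is $\chi(a)\chi(b)\chi(c) = \chi(abc)$. For the left-hand side, insert $\gamma(h)\gamma(h)^{-1}$ and $\gamma(k)\gamma(k)^{-1}$ into $\gamma(hk)^{-1}xy$ and then push $\gamma(k)$ past $b\in N$ using the conjugation action of $D$ on $N$ (namely $b\,\gamma(k) = \gamma(k)\,(k^{-1}\cdot b)$, since $\gamma(k)^{-1}b\,\gamma(k) = k^{-1}\cdot b \in N$):
\[
\gamma(hk)^{-1}xy = \gamma(hk)^{-1}\gamma(h)\cdot b\cdot \gamma(k)\cdot c = \bigl(\gamma(hk)^{-1}\gamma(h)\gamma(k)\bigr)\cdot(k^{-1}\cdot b)\cdot c = a\,(k^{-1}\cdot b)\,c.
\]
Therefore $\chi^*(xy) = \chi\bigl(a\,(k^{-1}\cdot b)\,c\bigr)$. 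Since $k\in D_\chi$, the defining condition $k\cdot\chi=\chi$ is equivalent to $\chi(k^{-1}\cdot n)=\chi(n)$ for every $n\in N$; applying this to $b$ yields $\chi(a(k^{-1}\cdot b)c)=\chi(abc)$, which matches the right-hand side.

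The only real obstacle is bookkeeping: one must track how $N$ sits inside $G$ and carefully move $\gamma(k)$ past an element of $N$, picking up a conjugation twist. The conceptual point is that the $G_\chi$-invariance of $\chi$ is \emph{exactly} what is needed to absorb the twist $k^{-1}\cdot b$, which is why the cocycle $\omega_\chi$ is defined on $D_\chi$ rather than on all of $D$; outside $D_\chi$ the same computation would fail precisely because $\chi(k^{-1}\cdot b)\neq \chi(b)$ in general.
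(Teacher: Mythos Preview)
Your proof is correct: the direct verification of normalization and twisted multiplicativity, pushing $\gamma(k)$ past $b$ via conjugation and then using $k\in D_\chi$ to absorb the twist, is exactly the standard argument. The paper itself gives no proof and simply cites \cite{KanTay13}, Lemma 4.48, so your write-up supplies the omitted computation.
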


\begin{defn}[\cite{CST22}]\mbox{}
    \begin{enumerate}
        \item A cocycle $\omega\in\msc{C}^2(H,\mc{A})$ is \textit{equalized}, if $\omega(x,x^{-1})=e_{\mc{A}}$, for all $x\in H$.
        \item A unitary cocycle $\omega\in\msc{C}^2(H,\T)$ is \textit{finitized}, if the image of $\omega$ is contained in some $\Z_n$ ($n\in\zn_{>0}$). Here, we are treating $\Z_n\leq\T$ as the subgroup of the $n$-th roots of unity.
    \end{enumerate}
\end{defn}

The primary purpose of the equalization and finitization functions are to prove Proposition \ref{prop:equalized_finitized_bijection}. The following procedure is justified in Proposition 6.15 and Theorem 6.19 of \cite{CST22}.

\begin{defn}
    Let $H$ be a finite group of order $n$ and fix a unitary 2-cocycle $\omega\in\msc{C}^2(H,\T)$. With respect to $\omega$, we define the following normalized functions.
    \begin{enumerate}
        \item The \textit{equalization function} $\mr{eq}\colon H\to\T$ is given by $\mr{eq}(h)=\sqrt{\omega(h,h^{-1})}$, where we choose the branch of square root so that the complex argument $\mr{Arg}(\mr{eq}(h))\in[0,\pi)$.

        \item Let $\omega$ be equalized. We define the \textit{finitization function} $\mr{fin}\colon H\to\T$ in a series of steps. \begin{enumerate}
            \item Define $\rho\colon H\to\T$ via $\rho(k)=\prod_{h\in H}\omega(h,k)$. It is a consequence that $\rho(k)=\rho(k^{-1})$ for every $k\in H$.
            
            \item We define the \textit{finitization function} $\mr{fin}\colon H\to\T$ piecewise as follows. These three possibilities are mutually exclusive and exhaustive. \begin{enumerate}
                \item If $h=h^{-1}$, define $\mr{fin}(h)=1$.
                
                \item If $h\neq h^{-1}$ and $\rho(h)=\rho(h^{-1})=-1$, arbitrarily choose an ordering for the pair $(h,h^{-1})$ and map $\mr{fin}(h)=e^{i\pi/n}$ and $\mr{fin}(h^{-1})=e^{-i\pi/n}$. This chosen order will not matter for our needs.
            
                \item Lastly, if $\rho(h)=-1$, define $\mr{fin}(h)=\left(\rho(h)\right)^{1/n}$ where $\mr{Arg}\left(\rho(h)\right)^{1/n}\in(-\pi/n,\pi/n]$.
            \end{enumerate}
        \end{enumerate} 
    \end{enumerate}
\end{defn}

\begin{prop}[\cite{CST22}, Thm 6.19]\label{prop:equalized_finitized_omega}
    Suppose $H$ is a finite group of order $n$. Every unitary 2-cocycle $\omega\in\msc{C}^2(H,\T)$ is cohomologous to a 2-cocycle $\omega_{\mr{fin}}$ that is simultaneously equalized and finitized, with codomain $\Z_n$. In particular, $\msc{H}^2(H,\T)\cong\msc{H}^2(H,\Z_n)$ is a finite group.
\end{prop}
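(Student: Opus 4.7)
The plan is to construct the required coboundary in two stages, matching the definitions of $\mr{eq}$ and $\mr{fin}$. For the \emph{equalization stage}, I would apply the cocycle identity to the triple $(h, h^{-1}, h)$ and invoke normalization to extract the symmetry $\omega(h, h^{-1}) = \omega(h^{-1}, h)$. Combined with the branch convention $\mr{Arg}(\mr{eq}(h)) \in [0, \pi)$, this forces $\mr{eq}(h)\mr{eq}(h^{-1}) = \omega(h, h^{-1})$, so that $\omega_1 := \omega \cdot \tau_{\mr{eq}}$ is a cocycle cohomologous to $\omega$ satisfying $\omega_1(h, h^{-1}) = 1$ for every $h \in H$.

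For the \emph{finitization stage}, I would work with the equalized $\omega_1$ and establish the identity
\[
\omega_1(y, z)^n \;=\; \frac{\rho(y)\rho(z)}{\rho(yz)} \;=\; \tau_{\rho^{-1}}(y, z),
\qquad \rho(k) := \prod_{h \in H} \omega_1(h, k),
\]
by multiplying the cocycle relation $\omega_1(xy, z)\omega_1(x, y) = \omega_1(x, yz)\omega_1(y, z)$ over all $x \in H$ and reindexing via $x \mapsto xy^{-1}$. Writing $\omega_{\mr{fin}} := \omega_1 \cdot \tau_{\mr{fin}^{-1}}$, the displayed identity gives
\[
\omega_{\mr{fin}}(y, z)^n \;=\; \tau_{\rho^{-1}\mr{fin}^{-n}}(y, z),
\]
so $\omega_{\mr{fin}}$ takes values in $\Z_n$ precisely when $\rho^{-1}\mr{fin}^{-n}$ is a group homomorphism $H \to \T$. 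In particular this already yields $\omega^n \sim 1$ in $\msc{H}^2(H, \T)$, bounding the exponent of cohomology classes by $n$.

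The chief obstacle is that the finitization must not undo the equalization, which demands $\mr{fin}(h)\mr{fin}(h^{-1}) = 1$ in addition to the homomorphism property above. The three-case piecewise definition of $\mr{fin}$ is engineered precisely to reconcile these two requirements, using the symmetry $\rho(k) = \rho(k^{-1})$ (a separate cocycle computation) together with the fact that the values of $\rho$ blocking the naive principal-branch $n$-th root lie in $\{\pm 1\}$: involutions and non-involution pairs with $\rho = 1$ are sent to $\mr{fin}(h) = 1$, while non-involution pairs with $\rho = -1$ are split into $\mr{fin}(h) = e^{i\pi/n}$ and $\mr{fin}(h^{-1}) = e^{-i\pi/n}$. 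I would verify case by case that the resulting $\omega_{\mr{fin}}$ is simultaneously equalized, $\Z_n$-valued, and cohomologous to $\omega$.

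For the \emph{``In particular''} clause, the above construction yields surjectivity of the natural map $\msc{H}^2(H, \Z_n) \to \msc{H}^2(H, \T)$ induced by the inclusion $\Z_n \hookrightarrow \T$. For injectivity I would use the divisibility of $\T/\Z_n$ to lift any $\lambda: H \to \T$ whose coboundary $\tau_\lambda$ is $\Z_n$-valued to a $\Z_n$-valued function with the same coboundary class. Finiteness of $\msc{H}^2(H, \T)$ is then immediate, as $\msc{H}^2(H, \Z_n)$ is a quotient of the finite group of normalized $\Z_n$-valued functions on $H \times H$.
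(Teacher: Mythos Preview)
Your proposal is correct and follows essentially the same route as the paper, which defers to \cite{CST22} and merely records that one multiplies first by $\tau_{\mr{eq}}$ to equalize and then by $\tau_{\mr{fin}}^{-1}$ to finitize while preserving equalization. You have simply filled in the mechanics behind those two steps (the product-over-$H$ identity giving $\omega_1^n=\tau_{\rho^{-1}}$, the fact that $\rho$ lands in $\{\pm1\}$ once $\omega_1$ is equalized, and the case split for $\mr{fin}$), and added a standard divisibility argument for the ``In particular'' clause that the paper leaves implicit.
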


The details can be found in \cite[Thm 6.19]{CST22}, where the 2-coboundaries $\tau_{\mr{eq}}$ and $\tau_{\mr{fin}}$ are used. It can be verified that for any unitary 2-cocycle $\omega$, $\omega_{\mr{eq}}:=\omega\cdot\tau_{\mr{eq}}$ is an equalized cocycle. It can also be shown that $\omega_{\mr{fin}}:=\omega_{\mr{eq}}\cdot\tau_{\mr{fin}}^{-1}$ is an equalized and finitized cocycle.

\begin{prop}\label{prop:equalized_finitized_bijection}
    There is a bijection $\wh{D_{\chi}}^{\omega_{\mr{fin}}}\xrightarrow{\sim}\wh{D_{\chi}}^{\omega_{\chi}}$ given by $\Phi^P\mapsto\Phi^P\cdot\mr{eq}^{-1}\cdot\mr{fin}$.
\end{prop}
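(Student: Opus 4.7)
The proposition is essentially a direct application of the proposition attributed to \cite{CST22} Remark 7.9(3) stated earlier in the excerpt, which says: if $\omega_1 = \omega_2 \cdot \tau_\lambda$ for some normalized $\lambda : H \to \T$, then the map $\wh{H}^{\omega_2} \to \wh{H}^{\omega_1}$ sending $\Phi^P \mapsto \Phi^P \cdot \lambda$ is a well-defined, equivalence-preserving bijection. The plan is to identify $\lambda = \mr{eq}^{-1} \cdot \mr{fin}$ and $H = D_\chi$, and then show the cocycles $\omega_\chi$ and $\omega_{\mr{fin}}$ differ precisely by the coboundary $\tau_\lambda$.

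First, I would rewrite the defining relations of $\omega_{\mr{eq}}$ and $\omega_{\mr{fin}}$, which were laid out immediately after Proposition \ref{prop:equalized_finitized_omega}. By construction,
\[
\omega_{\mr{eq}} = \omega_\chi \cdot \tau_{\mr{eq}}, \qquad \omega_{\mr{fin}} = \omega_{\mr{eq}} \cdot \tau_{\mr{fin}}^{-1} = \omega_\chi \cdot \tau_{\mr{eq}} \cdot \tau_{\mr{fin}}^{-1}.
\]
Solving for $\omega_\chi$ yields $\omega_\chi = \omega_{\mr{fin}} \cdot \tau_{\mr{fin}} \cdot \tau_{\mr{eq}}^{-1}$.

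Next I would invoke the two identities for coboundaries noted right after Definition \ref{defn:2_coboundary}, namely $\tau_{\rho_1 \rho_2} = \tau_{\rho_1} \cdot \tau_{\rho_2}$ and $\tau_{\rho^{-1}} = \tau_\rho^{-1}$, applied to the normalized functions $\mr{eq}^{-1}$ and $\mr{fin}$ on $D_\chi$. This gives
\[
\tau_{\mr{fin}} \cdot \tau_{\mr{eq}}^{-1} = \tau_{\mr{fin}} \cdot \tau_{\mr{eq}^{-1}} = \tau_{\mr{eq}^{-1} \cdot \mr{fin}}.
\]
Setting $\lambda := \mr{eq}^{-1} \cdot \mr{fin}$, we conclude $\omega_\chi = \omega_{\mr{fin}} \cdot \tau_\lambda$, so the two cocycles are cohomologous via the prescribed $\lambda$. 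Note that $\lambda$ is normalized because $\mr{eq}$ and $\mr{fin}$ are.

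Finally, applying the cited Remark 7.9(3) with $\omega_2 = \omega_{\mr{fin}}$, $\omega_1 = \omega_\chi$, and the $\lambda$ just identified produces the bijection $\wh{D_\chi}^{\omega_{\mr{fin}}} \xrightarrow{\sim} \wh{D_\chi}^{\omega_\chi}$ sending $\Phi^P \mapsto \Phi^P \cdot \lambda = \Phi^P \cdot \mr{eq}^{-1} \cdot \mr{fin}$, which is exactly the claim. There is no genuine obstacle here: the only bookkeeping to be careful about is the ordering of the inverses and the fact that $\tau_\bullet$ is contravariant under inversion of the generating set function, which is why $\lambda$ appears as $\mr{eq}^{-1} \cdot \mr{fin}$ rather than $\mr{eq} \cdot \mr{fin}^{-1}$. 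Everything else is formal once the defining equations $\omega_{\mr{eq}} = \omega_\chi \cdot \tau_{\mr{eq}}$ and $\omega_{\mr{fin}} = \omega_{\mr{eq}} \cdot \tau_{\mr{fin}}^{-1}$ are in hand from the preceding discussion.
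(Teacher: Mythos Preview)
Your proposal is correct and follows exactly the approach the paper takes: the paper's entire proof is the single sentence ``The proof follows from \cite[Rmk 7.9(3)]{CST22},'' and you have simply unpacked that citation by verifying $\omega_\chi = \omega_{\mr{fin}} \cdot \tau_{\mr{eq}^{-1}\cdot\mr{fin}}$ before invoking the remark. Your bookkeeping with the coboundary identities is accurate, so there is nothing to add.
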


\begin{proof}
    The proof follows from \cite[Rmk 7.9(3)]{CST22}.
\end{proof}

\begin{thm}[\cite{KanTay13}, Thm 4.53]\label{thm:projective_to_regular_bijection}
    There is a bijection $\wh{D_{\chi}}^{\omega_{\chi}}\xrightarrow{\sim}\wh{G}_{\chi}^{(\chi)}$ given by $\pi^P\mapsto\sigma$, where $\sigma(x):=\chi^*(x)\pi^P(q(x))$.
\end{thm}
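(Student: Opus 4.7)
The plan is to show the map $\Psi:\pi^P\mapsto\sigma$ (with $\sigma(x)=\chi^*(x)\pi^P(q(x))$) is a well-defined bijection by exhibiting an explicit two-sided inverse. The key inputs are Proposition \ref{prop:chi_star} (which tells us $\chi^*$ is a one-dimensional $\ol{\omega_\chi}$-representation of $G_\chi$) and the defining property of $\wh{G}_\chi^{(\chi)}$ (which forces $\sigma|_N$ to be scalar).

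First I would verify $\Psi$ is well-defined, i.e., $\sigma$ is an honest unitary representation of $G_\chi$ lying in $\wh{G}_\chi^{(\chi)}$. Multiplicativity follows from a direct computation using the two cocycle identities: for $x,y\in G_\chi$,
\[\sigma(xy)=\chi^*(xy)\pi^P(q(x)q(y))=\ol{\omega_\chi}(q(x),q(y))\chi^*(x)\chi^*(y)\cdot\omega_\chi(q(x),q(y))\pi^P(q(x))\pi^P(q(y)),\]
and the $\omega_\chi$ factors cancel (since they lie in $\T$), yielding $\sigma(x)\sigma(y)$. Unitarity is automatic since $\chi^*$ is $\T$-valued and $\pi^P$ is unitary. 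Normalization $\sigma(e_G)=I$ uses $\gamma(e_D)=e_G$ and that $\omega_\chi$ is normalized. For the condition on $N$: if $n\in N$ then $q(n)=e_D$, so $\pi^P(q(n))=I$ and $\chi^*(n)=\chi(\gamma(e_D)^{-1}n)=\chi(n)$; hence $\sigma(n)=\chi(n)I$, so $\sigma|_N=\chi^{\oplus\dim\pi^P}$.

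Next I would construct the inverse $\Phi:\sigma\mapsto\pi^P$ by setting $\pi^P(d):=\chi^*(x)^{-1}\sigma(x)$ for any lift $x\in q^{-1}(d)$. Well-definedness is the one point requiring care: for $x,xn\in q^{-1}(d)$ with $n\in N$, I need $\chi^*(xn)^{-1}\sigma(xn)=\chi^*(x)^{-1}\sigma(x)$. On the one hand, $\sigma(xn)=\sigma(x)\sigma(n)=\chi(n)\sigma(x)$ using $\sigma\in\wh{G}_\chi^{(\chi)}$; on the other, expanding the definition of $\chi^*$ and using that $\gamma(q(x))^{-1}x\in N$ gives $\chi^*(xn)=\chi^*(x)\chi(n)$, so the $\chi(n)$ factors cancel. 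A symmetric computation, run in reverse from the one in the previous paragraph, confirms that $\pi^P$ satisfies the $\omega_\chi$-cocycle identity. It is immediate from the definitions that $\Psi\circ\Phi$ and $\Phi\circ\Psi$ are the identities.

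Finally I would show $\Psi$ and $\Phi$ descend to the equivalence class quotients and that irreducibility is preserved on both sides. Since $\sigma(x)$ and $\pi^P(q(x))$ differ only by the scalar $\chi^*(x)$, a subspace $\mc{W}\sse\mc{V}$ is $\sigma$-invariant iff it is $\pi^P$-invariant, giving $\sigma\in\wh{G}_\chi^{(\chi)}\iff\pi^P\in\wh{D_\chi}^{\omega_\chi}$. For equivalence, an intertwiner $U$ with $U\pi_1^P(d)=\pi_2^P(d)U$ for all $d\in D_\chi$ yields $U\sigma_1(x)=\chi^*(x)U\pi_1^P(q(x))=\chi^*(x)\pi_2^P(q(x))U=\sigma_2(x)U$, and conversely the same $U$ intertwines the $\pi_i^P$ since $\chi^*(x)$ is a nonzero scalar. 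I expect the only mildly delicate step is the well-definedness check for $\Phi$ and the careful bookkeeping of the cocycle $\omega_\chi$ versus its conjugate $\ol{\omega_\chi}$; everything else is a direct substitution using Proposition \ref{prop:chi_star} and the definition of $\wh{G}_\chi^{(\chi)}$.
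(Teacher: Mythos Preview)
Your proposal is correct and follows essentially the same approach as the paper. The paper's proof simply observes that $\pi^P\circ q$ is an $\omega_\chi$-representation of $G_\chi$ and $\chi^*$ is an $\ol{\omega_\chi}$-representation (Proposition~\ref{prop:chi_star}), so their product has trivial cocycle and is therefore an ordinary representation; it then defers the remaining verifications (bijectivity, irreducibility, descent to equivalence classes) to \cite[Sec~4.7]{KanTay13}, whereas you have written out exactly those deferred details explicitly.
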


\begin{proof}
    Given an irreducible $\omega_{\chi}$-representation $\pi^P$ of $D_{\chi}$, the map $\pi^P\circ q\colon G_{\chi}\to\on{U}(\mc{V})$ is an $\omega_{\chi}$-representation of $G_{\chi}$, and by Proposition \ref{prop:chi_star}, $\chi^*\colon G_{\chi}\to\T$ is an $\ol{\omega_{\chi}}$-representation. Hence, the product $\sigma=\chi^*\cdot(\pi^P\circ q)$ is an irreducible projective representation of $G_{\chi}$ with 2-cocycle $\omega_{\chi}\cdot\ol{\omega_{\chi}}\equiv\mbbm{1}$, i.e., a bonafide group homomorphism. The proof of the remaining statements is covered in \cite[Sec 4.7]{KanTay13}.
\end{proof}

\begin{rmk}
We emphasize that the bijections in Proposition \ref{prop:equalized_finitized_bijection} and Theorem \ref{thm:projective_to_regular_bijection} {\em preserve} irreducibility.
\end{rmk}

It remains to find a systematic way to produce projective irreducible representations in $\wh{D_{\chi}}^{\omega_{\chi}}$. We are able to answer this question with a detour into (ordinary) group representations of finite groups. Moreover, to find $\wh{H}^{\omega}$ with $H$ a finite group of order $n$ and $\omega\in\msc{C}^2(H,\Z_{n})$ a finitized unitary 2-cocycle, the group extension of $H$ with respect to $\omega$ will prove useful.

\begin{defn}\label{defn:group_extension}
    Given a 2-cocycle $\omega\colon H\times H\to\mc{A}$, define the {\em group extension $G(\mc{A},H,\omega)$ of $H$ with kernel $\mc{A}$} such that
    \begin{enumerate}
        \item as sets, $G(\mc{A},H,\omega)=\mc{A}\times H$;
        \item the group operation $*$ on $G(\mc{A},H,\omega)$ is given by $(a_1,h_1)*(a_2,h_2):=(a_1a_2\omega(h_1,h_2)^{-1},h_1h_2)$;
        \item the element $(e_{\mc{A}},e_H)$ is the identity in $G(\mc{A},H,\omega)$; and
        \item inverses are given by $(a,h)^{-1}=(a^{-1}\omega(h,h^{-1}),h^{-1})$, for all $a\in\mc{A}$ and $h\in H$.
    \end{enumerate}
\end{defn}

This definition indeed defines a group. Furthermore, we have the following short exact sequence $$0\to\mc{A}\xrightarrow{i} G(\mc{A},H,\omega)\xrightarrow{s} H\to0,$$ where $i(a)=(a,e_H)$ and $s(a,h)=h$ and $i(\mc{A})$ is a central subgroup of $G(\mc{A},H,\omega)$.

Set $d:=|D_{\chi}|$. Each $\omega_{\chi}\in\msc{C}^2(D_{\chi},\T)$ is cohomologous to some equalized and finitized $\omega_{\mr{fin}}\in\msc{C}^2(D_{\chi},\Z_d)$ by Proposition \ref{prop:equalized_finitized_omega}. The group extension $G(\Z_d,D_{\chi},\omega_{\mr{fin}})$ is a finite group of order $d^2$. 

\begin{defn}
    The set $\wh{G}^*(\Z_d,D_{\chi},\omega_{\mr{fin}})$ consists of all of the (equivalence classes of) irreducible representations $\Theta\colon G(\Z_d,D_{\chi},\omega_{\mr{fin}})\to\usf(\mc{V})$ which satisfy $\Theta(\lambda,1_D)=\lambda I_{\mc{V}}$.
\end{defn}

\begin{thm}[\cite{CST22}, Prop 7.12]
    There is a bijection $\wh{G}^*(\Z_d,D_{\chi},\omega_{\mr{fin}})\xrightarrow{\sim}\wh{D_{\chi}}^{\omega_{\mr{fin}}}$ given by $\Theta\mapsto\Phi^P$, where $\Phi^P(d):=\Theta(1,d)$.
\end{thm}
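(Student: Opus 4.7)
The plan is to verify the forward map, construct an explicit inverse, and check that both maps preserve irreducibility and equivalence. The key point throughout is the multiplication law in the group extension: using Definition \ref{defn:group_extension}, one has $(\lambda_1,h_1)*(\lambda_2,h_2) = (\lambda_1\lambda_2\,\omega_{\mr{fin}}(h_1,h_2)^{-1}, h_1h_2)$, and in particular $(\lambda,1_D) = (\lambda,1_D)*(1,1_D)$ and $(\lambda,h) = (\lambda,1_D)*(1,h)$ (using that $\omega_{\mr{fin}}$ is normalized).

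First I would check that $\Phi^P(h) := \Theta(1,h)$ is indeed an $\omega_{\mr{fin}}$-representation of $D_{\chi}$. Since $\Theta$ is a homomorphism, we compute
\[
\Theta(1,h_1)\Theta(1,h_2) = \Theta\bigl((1,h_1)*(1,h_2)\bigr) = \Theta\bigl(\omega_{\mr{fin}}(h_1,h_2)^{-1}, h_1h_2\bigr).
\]
Invoking the constraint $\Theta(\lambda,1_D) = \lambda I_{\mc{V}}$ together with the factorization $(\omega_{\mr{fin}}(h_1,h_2)^{-1}, h_1h_2) = (\omega_{\mr{fin}}(h_1,h_2)^{-1},1_D)*(1,h_1h_2)$, the right-hand side becomes $\omega_{\mr{fin}}(h_1,h_2)^{-1}\Theta(1,h_1h_2)$. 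Rearranging, $\Phi^P(h_1h_2) = \omega_{\mr{fin}}(h_1,h_2)\Phi^P(h_1)\Phi^P(h_2)$, exactly the defining identity of an $\omega_{\mr{fin}}$-representation.

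Next I would write down the candidate inverse $\Phi^P \mapsto \Theta$ via $\Theta(\lambda,h) := \lambda\,\Phi^P(h)$. A direct computation using the $\omega_{\mr{fin}}$-representation identity shows
\[
\Theta(\lambda_1,h_1)\Theta(\lambda_2,h_2) = \lambda_1\lambda_2\,\Phi^P(h_1)\Phi^P(h_2) = \lambda_1\lambda_2\,\omega_{\mr{fin}}(h_1,h_2)^{-1}\Phi^P(h_1h_2) = \Theta\bigl((\lambda_1,h_1)*(\lambda_2,h_2)\bigr),
\]
so $\Theta$ is a genuine homomorphism, and clearly $\Theta(\lambda,1_D) = \lambda I_{\mc{V}}$. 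That the two maps are mutually inverse is then immediate: in one direction $\Theta(1,h) = 1\cdot\Phi^P(h) = \Phi^P(h)$, and in the other $\lambda\,\Theta(1,h) = \Theta(\lambda,1_D)\Theta(1,h) = \Theta(\lambda,h)$ using the factorization noted above.

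Finally I would argue the correspondence preserves irreducibility and equivalence. Since $i(\Z_d)$ acts by scalars under $\Theta$, a subspace $\mc{W}\leq\mc{V}$ is $\Theta$-invariant if and only if it is invariant under $\{\Theta(1,h) : h\in D_{\chi}\} = \{\Phi^P(h) : h\in D_{\chi}\}$, so $\Theta$ is irreducible iff $\Phi^P$ is. Likewise, a unitary $U$ intertwines two $\Theta$'s iff it intertwines the corresponding $\Phi^P$'s, because the scalar-valued $\Z_d$-actions are identical on both sides by hypothesis. The main obstacle — really just a bookkeeping subtlety — is keeping track of the signs and the $\omega^{-1}$ versus $\omega$ appearing in the group law versus the projective-representation identity, and using normalization of $\omega_{\mr{fin}}$ (Proposition \ref{prop:equalized_finitized_omega}) whenever a factor involving the identity needs to drop out.
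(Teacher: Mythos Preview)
Your argument is correct: the forward map, the explicit inverse $\Theta(\lambda,h):=\lambda\,\Phi^P(h)$, and the preservation of irreducibility and equivalence are all verified cleanly, and your bookkeeping with the $\omega_{\mr{fin}}^{-1}$ in the group law versus the $\omega_{\mr{fin}}$ in the projective identity is accurate. The paper itself does not supply a proof of this statement at all---it simply cites \cite[Prop.~7.12]{CST22}---so your direct verification is exactly the standard argument one would expect to find behind that citation.
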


Finally, we characterize the relationship between $\wh{G}_{\chi}^{(\chi)}$ and projective representations.

\begin{thm}\label{thm:long_bijection}
    Fix a character $\chi\in\wh{N}$ with stabilizer $G_{\chi}$ where the finite quotient $D_{\chi}=G_{\chi}\slash N$ is of order $d$. Take $\chi^*\colon G_{\chi}\to\T$ to be the extension of $\chi$. There is a bijection $$\begin{array}{clcrccc}
    \wh{G}^*(\Z_d,D_{\chi},\omega_{\mr{fin}}) & \xrightarrow{\sim} & \wh{D_{\chi}}^{\omega_{\mr{fin}}} & \xrightarrow{\sim} & \wh{D_{\chi}}^{\omega_{\chi}} & \xrightarrow{\sim} & \wh{G}_{\chi}^{(\chi)}\\
    \Theta & \mapsto & \Phi^P & \mapsto & \Phi^P\cdot\mr{eq}^{-1}\cdot\mr{fin} & \mapsto & \chi^*\cdot((\Phi^P\cdot\mr{eq}^{-1}\cdot\mr{fin})\circ q),
\end{array}$$ where $\Phi^P(d):=\Theta(1,d)$. The dimensions of the images of each representation are preserved.
\end{thm}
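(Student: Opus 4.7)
The theorem is essentially the concatenation of three bijections that have each been established individually in the preceding results. My plan is to assemble these into the single chain and then check that the composite map has the claimed formula, preserves irreducibility, and preserves dimension.

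First, I would invoke the bijection $\wh{G}^*(\Z_d,D_{\chi},\omega_{\mr{fin}})\xrightarrow{\sim}\wh{D_{\chi}}^{\omega_{\mr{fin}}}$ from the immediately preceding theorem (citing \cite{CST22}, Prop 7.12), sending $\Theta\mapsto\Phi^P$ with $\Phi^P(d):=\Theta(1,d)$. Then I would compose with the bijection $\wh{D_{\chi}}^{\omega_{\mr{fin}}}\xrightarrow{\sim}\wh{D_{\chi}}^{\omega_{\chi}}$ from Proposition \ref{prop:equalized_finitized_bijection}, sending $\Phi^P\mapsto\Phi^P\cdot\mr{eq}^{-1}\cdot\mr{fin}$. (Here I should briefly note that since $\omega_{\mr{fin}}=\omega_{\chi}\cdot\tau_{\mr{eq}}\cdot\tau_{\mr{fin}}^{-1}$, this is exactly the scaling map of the cited proposition, and it is a well-defined bijection of equivalence classes by that proposition.) Finally, I would compose with the bijection $\wh{D_{\chi}}^{\omega_{\chi}}\xrightarrow{\sim}\wh{G}_{\chi}^{(\chi)}$ from Theorem \ref{thm:projective_to_regular_bijection}, which sends $\pi^P\mapsto\sigma$ with $\sigma(x):=\chi^*(x)\pi^P(q(x))$.

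Substituting into the last map the output of the second map gives exactly $\sigma(x)=\chi^*(x)\cdot\bigl((\Phi^P\cdot\mr{eq}^{-1}\cdot\mr{fin})\circ q\bigr)(x)$, which matches the formula in the statement. Since each of the three maps is a bijection, their composition is a bijection, so the chain is established.

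For the dimension claim, it suffices to observe that each of the three maps acts on a representation without altering the underlying representation space: the first map simply restricts $\Theta$ along $d\mapsto(1,d)$; the second pointwise multiplies $\Phi^P(h)$ by a scalar $\mr{eq}(h)^{-1}\mr{fin}(h)\in\T$; and the third both pointwise multiplies by $\chi^*(x)\in\T$ and precomposes with the surjection $q\colon G_{\chi}\to D_{\chi}$. In every case the target Hilbert space $\mc{V}$ is unchanged, so $\dim\Theta=\dim\Phi^P=\dim\sigma$. Irreducibility preservation was noted in the remark following Theorem \ref{thm:projective_to_regular_bijection} and is already built into the statements of Proposition \ref{prop:equalized_finitized_bijection} and Theorem \ref{thm:projective_to_regular_bijection}, since both are bijections of \emph{irreducible} equivalence classes.

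There is no real obstacle here: the theorem is a bookkeeping assembly of previously proved facts, and the only mildly delicate point is confirming that the explicit formula for the composite agrees with the one in the statement, which is a direct substitution. The proof can therefore be written quite briefly.
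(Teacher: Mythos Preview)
Your proposal is correct and matches the paper's approach: the paper states Theorem \ref{thm:long_bijection} without a separate proof, since it is precisely the composition of the three bijections established in the immediately preceding results (the theorem citing \cite{CST22}, Prop~7.12; Proposition \ref{prop:equalized_finitized_bijection}; and Theorem \ref{thm:projective_to_regular_bijection}). Your write-up simply makes explicit the bookkeeping the paper leaves implicit, including the dimension-preservation check.
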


We observe that our desired application of Theorem \ref{thm:long_bijection} requires a complete list of irreducible unitary representations of the finite group $G(\Z_d,D_{\chi},\omega_{\mr{fin}}$). We use the computational group theory system \cite{GAP4} to produce the set $\wh{G}^*(\Z_d,D_{\chi},\omega_{\mr{fin}})$.


\subsection{Systematically Generating Irreducible Representations}\label{subsec:the_program}

We now justify the mathematical underpinnings of the GAP code found on \cite{ChaWel24}. Expanding this subsection to cover the class of all finitely presented discrete virtually abelian groups is the focus of a future project. Moreover, we are working to improve the code to further systematize the process.

Fix a crystallographic group $G$ with lattice $N$ and point group $D$. Define a factor set $\nu:D\times D\rightarrow N$ based on the fixed normalized section $\gamma:D\rightarrow G$.

\tbf{The Procedure:} 
\begin{enumerate}
\item Select a concrete $\chi\in\wh{N}$ and determine $G_{\chi}$, $D_{\chi}=G_{\chi}\slash N$. Here, concrete means $\chi\in\wh{N}$ is chosen with specific numerical values. See the discussion in Section \ref{subsec:group_90}.
\item Define the unitary 2-cocycle corresponding to $\chi$: $\omega_{\chi}\colon D_{\chi}\times D_{\chi}\rightarrow\tn$ given by $\omega_{\chi}:=\ov{\chi}\circ \nu$.
\item Replace $\omega_{\chi}$ by a cohomologous equalized and finitized unitary 2-cocycle $\omega_{\mr{fin}}$ following the method in Proposition \ref{prop:equalized_finitized_omega}.
\item Create the group extension $G(D_{\chi},\Z_d,\omega_{\mr{fin}})$, a finite group, via Definition \ref{defn:group_extension}.
\item Find the irreducible representations of $G(D_{\chi},\Z_d,\omega_{\mr{fin}})$ using GAP (or similar library of finite group representations).
\item Use Theorem \ref{thm:long_bijection} to produce $\wh{G}_{\chi}^{(\chi)}$.
\item Induce from $G_{\chi}$ to $G$ as in Definition \ref{defn:induced_representation}.
\end{enumerate}

After considering all $[\chi]\in\wh{N}\slash D$, we see this procedure exhausts $\wh{G}$ by the Mackey Machine. 

\subsection{Constructing Convergent Sequences in \str{$\wh{D}_*$}{D-hat-star}}\label{subsec:constructing_convergent_sequences}
Our next step is to lay the groundwork for generating convergent sequences in $\wh{G}$, which will be leveraged in Sections \ref{subsec:main_top_results} and \ref{subsec:unitary_dual_via_sequences}. We continue to see the importance of the structure of projective representations.

\begin{lemma}\label{lem:set_fcts_are_projReps}
    Let $H$ be a finite group and $\lambda\colon H\to\T$ be any normalized set function. Then $\lambda$ is a $\tau_{\lambda}$-representation of $H$ (see Definition \ref{defn:2_coboundary}).
\end{lemma}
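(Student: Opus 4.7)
The plan is to unpack the definitions and perform a one-line substitution. By the definition in the paper, a $\tau_\lambda$-representation of $H$ on a Hilbert space $\mc{V}$ is a function $\pi^P : H \to \usf(\mc{V})$ such that $\pi^P(xy) = \tau_\lambda(x,y)\pi^P(x)\pi^P(y)$ for all $x,y \in H$. Viewing $\T \subset \usf(\mc{H}_1)$ as the group of $1 \times 1$ unitary matrices, the set function $\lambda : H \to \T$ is a candidate for a $1$-dimensional projective representation, so only the projective cocycle identity needs to be checked.

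First I would recall that $\tau_\lambda(x,y) = \lambda(xy)[\lambda(x)\lambda(y)]^{-1}$ by Definition \ref{defn:2_coboundary}. Then for any $x,y \in H$, I would compute
\[
\tau_\lambda(x,y)\,\lambda(x)\,\lambda(y) \;=\; \lambda(xy)[\lambda(x)\lambda(y)]^{-1}\,\lambda(x)\,\lambda(y) \;=\; \lambda(xy),
\]
where the multiplications commute because $\T$ is abelian. This gives exactly the required projective identity.

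Finally, I would observe that the normalization hypothesis $\lambda(e_H) = 1$ matches the convention $\pi^P(e_H) = I_{\mc{V}}$ required for projective $\omega$-representations, and since $\tau_\lambda$ is already a $2$-coboundary (hence an honest $2$-cocycle), this confirms $\lambda$ is a bona fide $\tau_\lambda$-representation. There is no real obstacle here: the lemma is essentially a tautological consequence of the definition of $\tau_\lambda$, and its role is as a bookkeeping device enabling the comparison of projective representations with different cocycles later in the section.
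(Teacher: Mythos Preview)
Your proof is correct and follows essentially the same approach as the paper: both verify the tautological identity $\lambda(xy)=\tau_\lambda(x,y)\lambda(x)\lambda(y)$ directly from the definition of $\tau_\lambda$. The paper phrases it by defining $\eta$ via $\lambda(hk)=\eta(h,k)\lambda(h)\lambda(k)$ and observing $\eta=\tau_\lambda$, which is just your computation read in reverse.
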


\begin{proof}
    Define $\eta\colon H\times H\to\T$ via $\lambda(hk)=\eta(h,k)\lambda(h)\lambda(k)$. Then by definition, $\eta=\tau_{\lambda}$.
\end{proof}





\begin{defn}
    Fix a subgroup $D_*\leq D$. We say two characters $\chi,\chi'\in\wh{N}$ are \emph{cohomologous} (over $D_*$), if $\omega_{\chi},\omega_{\chi'}\in\msc{C}^2(D_*,\T)$ and $\omega_{\chi}\sim\omega_{\chi'}$.
\end{defn}

Let $\chi_k\to\chi$. By reducing to a subsequence, we can assume that each stabilizer $D_*=D_{\chi_k}$, for some fixed $D_*\leq D$. Each of the $\omega_{\chi_k}$ are 2-cocycles over $D_*$. That $\omega_{\chi}$ is {\em also} a 2-cocycle over $D_*$ is justified in Proposition \ref{prop:conv_chars}. 

\begin{prop}\label{prop:eventually_trivial}
    Suppose that $\chi_k\to\mbbm{1}$. Then eventually, the sequence $\chi_k$ is cohomologically trivial over $\msc{C}^2(D_*,\T)$.
\end{prop}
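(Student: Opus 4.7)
The plan is to reduce the statement to a topological claim about the group $\mathscr{C}^2(D_*,\T)$ and its subgroup of coboundaries, and then exploit the finiteness of $\mathscr{H}^2(D_*,\T)$ granted by Proposition \ref{prop:equalized_finitized_omega}.

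First I would topologize $\mathscr{C}^2(D_*,\T)$ as a closed (hence compact) subgroup of the finite-dimensional torus $\T^{D_*\times D_*}$; the cocycle identities are continuous equations, so this is straightforward. Since $\chi_k\to\mathbbm{1}$ in $\wh{N}\cong \T^r$ means $\chi_k(n)\to 1$ for every $n\in N$, and since $\omega_{\chi_k}(h,k)=\ol{\chi_k}(\nu(h,k))$ involves only the finitely many factor-set values $\nu(h,k)\in N$ with $(h,k)\in D_*\times D_*$, I get $\omega_{\chi_k}\to\mathbbm{1}$ in the topology of $\mathscr{C}^2(D_*,\T)$. Thus the task becomes: show that a sequence in $\mathscr{C}^2(D_*,\T)$ converging to $\mathbbm{1}$ eventually lies in $\mathscr{B}^2(D_*,\T)$.

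The key step is to show that $\mathscr{B}^2(D_*,\T)$ is an \emph{open} subgroup of $\mathscr{C}^2(D_*,\T)$. The coboundary map $\rho\mapsto\tau_{\rho}$ is a continuous homomorphism from the compact torus of normalized set functions $\rho:D_*\to\T$ into $\mathscr{C}^2(D_*,\T)$, so $\mathscr{B}^2(D_*,\T)$ is a compact (hence closed) subgroup. By Proposition \ref{prop:equalized_finitized_omega}, the index $[\mathscr{C}^2(D_*,\T):\mathscr{B}^2(D_*,\T)]=|\mathscr{H}^2(D_*,\T)|$ is finite. The standard fact that a closed subgroup of finite index in a compact group is automatically open (the coset space is a finite $T_1$ space, hence discrete) then gives the desired openness.

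Once $\mathscr{B}^2(D_*,\T)$ is known to be open, there is a neighborhood $U$ of $\mathbbm{1}$ in $\mathscr{C}^2(D_*,\T)$ with $U\subseteq\mathscr{B}^2(D_*,\T)$. Since $\omega_{\chi_k}\to\mathbbm{1}$, we have $\omega_{\chi_k}\in U$ for all $k$ sufficiently large, and hence $\omega_{\chi_k}\in\mathscr{B}^2(D_*,\T)$, i.e.\ $\chi_k$ is cohomologically trivial eventually. The main obstacle here is the openness of $\mathscr{B}^2(D_*,\T)$; this is the place where both the topological structure (compactness of the ambient group) and the algebraic structure (finiteness of $\mathscr{H}^2$ from Proposition \ref{prop:equalized_finitized_omega}) are genuinely used, and it is the only step that is not a routine unwinding of definitions.
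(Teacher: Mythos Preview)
Your proof is correct and takes a genuinely different route from the paper's. The paper argues by subsequences: pass to a subsequence on which all $\omega_{\chi_k}$ lie in a single class of the finite set $\mathscr{H}^2(D_*,\T)$, write $\omega_k=\omega_0\cdot\tau_{\lambda_k}$, extract a further subsequence with $\lambda_k\to\lambda$ by compactness of $\T^{|D_*|}$, and observe that $\omega_k\to\mathbbm{1}$ forces $\omega_0=\tau_{\ol{\lambda}}$, so the common class was trivial all along. Your approach instead packages the same two ingredients (finiteness of $\mathscr{H}^2(D_*,\T)$ and compactness) into the single structural statement that $\mathscr{B}^2(D_*,\T)$ is an \emph{open} subgroup of $\mathscr{C}^2(D_*,\T)$, after which ``eventually trivial'' is immediate from $\omega_{\chi_k}\to\mathbbm{1}$. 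Your argument is cleaner and yields the ``eventually'' conclusion directly, without the implicit subsequence-of-a-subsequence reasoning the paper relies on; the paper's argument, on the other hand, produces explicit witnessing functions $\lambda_k$ and their limit $\lambda$, which is exactly the data needed downstream in Section~\ref{subsec:constructing_convergent_sequences} to build the generic sequences of representations.
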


\begin{proof}
    We may reduce to a subsequence so $\chi_k\to\mbbm{1}$ such that each $\omega_k:=\ol{\chi_k}\circ\nu$ belongs to the same cohomology class of the finite set $\msc{H}^2(D_*,\T)$. Hence, we have a sequence of (normalized) functions $\lambda_k\colon D_*\to\T$ such that $\omega_k=\omega_0\cdot\tau_{\lambda_k}$. We may insist that $\lambda_k$ converges pointwise to some normalized function $\lambda\colon D_*\to\T$ by reducing to a further subsequence. Since $\omega_0\cdot\tau_{\lambda_k}=\omega_k\to\mbbm{1}$, it follows that $\tau_{\lambda_k}\to\ol{\omega_0}$. Consequently, $\omega_0=\tau_{\ol{\lambda}}$, and each successive $\omega_k$ must also be cohomologically trivial.
\end{proof}

\begin{cor}\label{cor:cohomologous_sequence}
    Suppose that $\chi_k\to\chi$. Then the sequence $\chi_k$ is eventually cohomologous to $\chi$ over $\msc{C}^2(D_*,\T)$.
\end{cor}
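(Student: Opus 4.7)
The plan is to reduce Corollary \ref{cor:cohomologous_sequence} to the already-proven Proposition \ref{prop:eventually_trivial} via a ``change of basepoint'' trick: instead of comparing $\omega_{\chi_k}$ directly to $\omega_{\chi}$, I would compare the ratio $\chi_k\ol{\chi}$ to the trivial character $\mbbm{1}$. Set $\xi_k := \chi_k\cdot\ol{\chi}\in\wh{N}$; since multiplication in $\wh{N}\cong\T^r$ is jointly continuous and $\chi_k\to\chi$, it follows that $\xi_k\to\mbbm{1}$.

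The next step is to verify that everything stays within $\msc{C}^2(D_*,\T)$. The subgroup $D_*$ acts on $\wh{N}$ by group automorphisms, and by hypothesis (as inherited from Proposition \ref{prop:conv_chars}) it stabilizes $\chi$, while eventually it stabilizes each $\chi_k$ as well; hence it stabilizes the product $\xi_k$, and $\omega_{\xi_k}$ is a well-defined 2-cocycle on $D_*$. Because $\chi$ and $\chi_k$ are homomorphisms $N\to\T$, the definition $\omega_{\psi}=\ol{\psi}\circ\nu$ unwinds to
$$\omega_{\xi_k}(h,l)=\ol{\chi_k\ol{\chi}}(\nu(h,l))=\ol{\chi_k}(\nu(h,l))\cdot\chi(\nu(h,l))=\omega_{\chi_k}(h,l)\cdot\ol{\omega_{\chi}}(h,l),$$
that is, $\omega_{\xi_k}=\omega_{\chi_k}\cdot\ol{\omega_{\chi}}$ in $\msc{C}^2(D_*,\T)$. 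Applying Proposition \ref{prop:eventually_trivial} to $\xi_k\to\mbbm{1}$ then yields that $\omega_{\xi_k}$ is eventually cohomologically trivial, and substituting into the displayed identity gives $\omega_{\chi_k}\sim\omega_{\chi}$ for all sufficiently large $k$, which is precisely the claim.

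I expect essentially no real obstruction: the only delicate point is ensuring that the ``eventually'' conclusion of Proposition \ref{prop:eventually_trivial} applies to the full sequence rather than merely a single subsequence, but this is automatic from the finiteness of $\msc{H}^2(D_*,\T)$, since applying the proposition along an arbitrary subsequence of $\xi_k$ forces every subsequential cohomology class to be trivial. Everything else is just the multiplicativity of the character-pairing $\psi\mapsto\ol{\psi}\circ\nu$ and continuity of multiplication in $\wh{N}$.
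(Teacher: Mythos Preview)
Your proposal is correct and follows exactly the paper's approach: the paper's proof is the one-liner ``Observe that $\chi_k\cdot\ol{\chi}\to\mbbm{1}$ and then apply Proposition \ref{prop:eventually_trivial},'' and you have simply unpacked this, verifying explicitly that $\omega_{\xi_k}=\omega_{\chi_k}\cdot\ol{\omega_{\chi}}$ and that $D_*$ stabilizes $\xi_k$ so that the cocycle is well-defined on $D_*$.
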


\begin{proof}
    Observe that $\chi_k\cdot\ol{\chi}\to\mbbm{1}$ and then apply Proposition \ref{prop:eventually_trivial}.
\end{proof}

Take a convergent sequence $\chi_k\to\chi\in\wh{N}$, subsequentially reduced to have a common stabilizer subgroup $D_*\leq D_{\chi}$, and such that each $\chi_k$ are cohomologous to $\chi$ over $D_*$ (which we may arrange by Proposition \ref{prop:conv_chars}). We can therefore find a sequence of normalized functions $\lambda_k\colon D_*\to\T$ such that $\omega_k:=\ol{\chi_k}\circ\nu=\omega_0\cdot\tau_{\lambda_k}$, for each $k\geq1$. We may also assume that $\lambda_k\to\lambda$ (pointwise on $D_*$) and $\omega=\omega_0\cdot\tau_{\lambda}$. By Remark 7.9(3) in \cite{CST22}, we have bijections given by 
$$\begin{array}{clcrc}
    \wh{D_*}^{\omega_k} & \xleftarrow{\sim} & \wh{D_*}^{\omega_0} & \xrightarrow{\sim} & \wh{D_*}^{\omega}\\
    \Phi^P\cdot\lambda_k & \mapsfrom & \Phi^P & \mapsto & \Phi^P\cdot\lambda.
\end{array}$$

Let $\Phi^P$ be an $\omega_0$-representation of dimension $n$ and fix the standard Hilbert space $\hn$. We can then produce a sequence 
\begin{equation}\label{eq:sequence}
\Phi^P,\ \Phi^P\cdot\lambda_1,\ \Phi^P\cdot\lambda_2,\dots\to\Phi^P\cdot\lambda
\end{equation}
of irreducible projective representations of $D_*$ in $\hn$ corresponding to each $\omega_k$ which converges to the $\omega$-representation $\Phi^P\cdot\lambda$ (this $\omega$-representation is {\em also} irreducible over $D_*$). 

Since $|\wh{D_*}^{\omega_k}|=|\wh{D_*}^{\omega}|<\infty$, this is the only way these convergent sequences of irreducible projective representations can arise. That is, given a convergent sequence $\pi_k^P\to\pi^P$ over $D_*$ in $\hn$ where each $\pi_k^P$ and $\pi^P$ are irreducible projective representations under cohomologous 2-cocycles, we can reduce to a subsequence and set $\pi_0^P=\Phi^P$, $\pi_k^P=\Phi^P\cdot\lambda_k$, and $\pi^P=\Phi^P\cdot\lambda$ so that $\Phi^P,\ \Phi^P\cdot\lambda_1,\ \Phi^P\cdot\lambda_2,\dots\to\Phi^P\cdot\lambda$.  Clearly, if $\chi_k\to\chi$, then $\chi_k^*\to\chi^*$ over $G_*$. By Theorem 4.53 in \cite{KanTay13}, we have the bijection 
\begin{equation}\label{eq:sequence2}
\begin{array}{clc}
    \wh{D_*}^{\omega_k} & \xrightarrow{\sim} & \wh{G_{*}}^{(\chi_k)}\\
   \pi_k^P & \mapsto & \chi_k^*\cdot\pi_k^P,
\end{array}
\end{equation}
where in the image, we are using $\pi_k^P$ to mean $\pi_k^P\circ q$. This gives us the sequence of irreducible ordinary representations in $\wh{G_*}^{(\chi_k)}$, $$\chi_0^*\cdot\Phi^P,\ \chi_1^*\cdot\Phi^P\cdot\lambda_1,\ \chi_2^*\cdot\Phi^P\cdot\lambda_2,\dots\to\chi^*\cdot\Phi^P\cdot\lambda,$$ converging in $\usf(\hn)$ to an irreducible ordinary representation $\chi^*\cdot\Phi^P\cdot\lambda$ of $G_*$ in $\hn$ such that ${(\chi^*\cdot\Phi^P\cdot\lambda)|}_N=\chi^{\oplus n}$.

\begin{defn}\mbox{}
    \begin{enumerate}
        \item $\on{R}^{\omega}(D_\chi)$ (resp. $\wh{D_{\chi}}^{\omega}$) is the set of equivalence classes of (resp. irreducible) projective $\omega$-representations of $D_{\chi}$. 
        \item $\on{R}_{\chi}(G_\chi)$ (resp. $\wh{G}_{\chi}^{(\chi)}$) is the set of equivalence classes of (resp. irreducible) ordinary representations of $G_{\chi}$, which restrict to $N$ as a direct sum of $\chi$.
    \end{enumerate}
\end{defn}

\begin{prop}[\cite{KanTay13}, Thm 4.53, modified]\label{prop:projective_lift}
    The map $\on{R}^{\omega}(D_{\chi})\xrightarrow{\sim}\on{R}_{\chi}(G_{\chi})$ given by $\pi^P\mapsto\chi^*\cdot(\pi^P\circ q)$ is a bijection and preserves direct sums. Furthermore, this map restricts to a bijection $\wh{D_{\chi}}^{\omega}\xrightarrow{\sim}\wh{G}_{\chi}^{(\chi)}$.
\end{prop}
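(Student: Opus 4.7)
The plan is to extend Theorem \ref{thm:projective_to_regular_bijection} (Kaniuth--Taylor 4.53), which already supplies the bijection on the irreducible pieces, to all finite-dimensional representations, and to verify that the map respects direct sums by a one-line scalar argument. The key point is that $\Psi(\pi^P):=\chi^*\cdot(\pi^P\circ q)$ is multiplication of a precomposition by a scalar function, so it manifestly commutes with forming direct sums.

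First, I would check well-definedness of $\Psi$ as a map $\on{R}^\omega(D_\chi)\to\on{R}_\chi(G_\chi)$. Because $\pi^P$ is an $\omega_\chi$-representation and, by Proposition \ref{prop:chi_star}, $\chi^*$ is an $\ol{\omega_\chi}$-representation, their product carries cocycle $\omega_\chi\cdot\ol{\omega_\chi}=\mbbm{1}$, so $\Psi(\pi^P)$ is an honest group homomorphism $G_\chi\to\usf(\mc{V})$. To see $\Psi(\pi^P)|_N=\chi^{\oplus\dim\pi^P}$, note that for $n\in N$ we have $q(n)=1_D$, hence $\pi^P(q(n))=I$, and since $\gamma(1_D)=1_G$ also $\chi^*(n)=\chi(\gamma(1_D)^{-1}n)=\chi(n)$, so $\Psi(\pi^P)(n)=\chi(n)\,I$ acting diagonally on $\mc{V}$.

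Next, the direct-sum property is immediate: for $\pi_1^P,\pi_2^P\in\on{R}^\omega(D_\chi)$,
\[
\Psi(\pi_1^P\oplus\pi_2^P)(x)=\chi^*(x)\bigl(\pi_1^P(q(x))\oplus\pi_2^P(q(x))\bigr)=\Psi(\pi_1^P)(x)\oplus\Psi(\pi_2^P)(x),
\]
since the scalar $\chi^*(x)$ pulls through the block decomposition. For bijectivity, I would then argue by decomposition into irreducibles. On the source, any $\omega_\chi$-representation decomposes (uniquely up to equivalence and ordering) into irreducible $\omega_\chi$-representations; this is the analogue of Maschke--Wedderburn applied to the twisted group algebra, and in the present setup it is encoded in the group extension $G(\Z_d,D_\chi,\omega_{\mr{fin}})$ of Definition \ref{defn:group_extension}, which is an honest finite group. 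On the target, any $\sigma\in\on{R}_\chi(G_\chi)$ decomposes into irreducibles, and each irreducible component still restricts to a multiple of $\chi$ on $N$ because the $\chi$-isotypic subspace under $\sigma|_N$ is all of $\mc{V}$ and is therefore stable under $G_\chi$. Consequently, every decomposition on either side is a direct sum of irreducibles lying in $\wh{D_\chi}^{\omega_\chi}$ or $\wh{G}_\chi^{(\chi)}$ respectively.

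Combining these observations, the additive extension of the irreducible bijection of Theorem \ref{thm:projective_to_regular_bijection} agrees with $\Psi$ by the direct-sum compatibility verified above. Injectivity and surjectivity of $\Psi$ on equivalence classes then follow component-by-component from the irreducible bijection, and the final statement that $\Psi$ restricts to the bijection $\wh{D_\chi}^\omega\xrightarrow{\sim}\wh{G}_\chi^{(\chi)}$ is exactly Theorem \ref{thm:projective_to_regular_bijection}. The only step requiring any genuine care is confirming that the decomposition of an element of $\on{R}_\chi(G_\chi)$ into irreducibles stays within $\wh{G}_\chi^{(\chi)}$, i.e.\ that each summand still restricts to $\chi^{\oplus m}$ on $N$; this is the mild obstacle, and it dissolves as soon as one observes that $\sigma|_N$ is already $\chi$-isotypic, so any $G_\chi$-invariant subspace inherits that property.
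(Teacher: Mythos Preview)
The paper does not supply a proof for this proposition at all; it is stated with the attribution ``[KanTay13, Thm 4.53, modified]'' and left without a proof environment. Your argument is correct and is exactly the natural way to fill in what the paper omits: verify well-definedness via the cocycle cancellation of Proposition~\ref{prop:chi_star}, observe that the scalar factor $\chi^*$ passes through block decompositions so direct sums are preserved, and then reduce bijectivity to the irreducible case of Theorem~\ref{thm:projective_to_regular_bijection} by decomposing on both sides. Your remark that each irreducible summand of an element of $\on{R}_\chi(G_\chi)$ still lies in $\wh{G}_\chi^{(\chi)}$ because the ambient representation is already $\chi$-isotypic on $N$ is the only point needing a moment's thought, and you handle it correctly.
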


\section{Sequences in \str{$\wh{G}$}{G-hat}}\label{sec:convergence_of_sequences}

\subsection{Spectrum of a Liminal \str{$C^*$}{C-star}-algebra}\label{subsec:spectrum_liminal_Cstar}

Our $C^*$-algebras of interest are separable and liminal. We say $A$ is a {\em liminal} $C^*$-algebra if, for all irreducible representations $\pi$ of $A$, $\pi(x)$ is compact for all $x\in A$. When $A$ is liminal, every point in $\wh{A}$ is closed \cite[4.4.1 (p.102)]{Dix77}. 

This leads to an important corollary.

\begin{cor}\label{cor:closed_points_app}
    Let $A$ be a liminal $C^*$-algebra with $\rho,\sigma$ irreducible representations. If $\ker\rho\sse\ker\sigma$, then $\rho\simeq\sigma$.
\end{cor}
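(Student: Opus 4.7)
The plan is to exploit the fact that liminality upgrades the generally only-$T_0$ topology on $\wh{A}$ to a $T_1$ one, and then translate the hypothesis $\ker\rho\sse\ker\sigma$ into an equality of kernels via the Jacobson topology.

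First, I would invoke the result cited just before the corollary: since $A$ is liminal, every point of $\wh{A}$ is closed in the Fell topology, so $\wh{A}$ is $T_1$, and in particular $T_0$. Applying Proposition \ref{prop:canonical_homeomorphism}, the canonical map $\wh{A}\to\Prim(A)$ given by $[\pi]\mapsto\ker\pi$ is then a homeomorphism. Consequently, $\Prim(A)$ is itself $T_1$, meaning singletons are closed in the Jacobson topology.

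Next, I would translate the assumption into topological language. By definition of the closure operation on $\Prim(A)$,
\[\ov{\{\ker\rho\}}=\{J\in\Prim(A)\colon \mc{I}(\{\ker\rho\})\sse J\}=\{J\in\Prim(A)\colon \ker\rho\sse J\}.\]
Thus the hypothesis $\ker\rho\sse\ker\sigma$ is exactly the statement that $\ker\sigma\in\ov{\{\ker\rho\}}$. Combined with the previous step, where $\ov{\{\ker\rho\}}=\{\ker\rho\}$, this forces $\ker\rho=\ker\sigma$.

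Finally, applying Proposition \ref{prop:canonical_homeomorphism}(ii), the equality $\ker\rho=\ker\sigma$ immediately yields $\rho\simeq\sigma$, completing the argument. The only conceptually delicate point is the first step: one has to recall that ``closed points'' in $\wh{A}$ is a genuine strengthening of the $T_0$ axiom, and that this strengthening is precisely what liminality provides. Everything else is a direct unraveling of the Jacobson topology and the canonical map.
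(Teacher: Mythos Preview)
Your argument is correct and is exactly the intended unpacking: the paper states this as an immediate corollary of the fact that points in $\wh{A}$ are closed when $A$ is liminal, without writing out a proof, and your steps (closed points $\Rightarrow$ $T_1$ $\Rightarrow$ $T_0$ $\Rightarrow$ $\wh{A}\approx\Prim(A)$ $\Rightarrow$ singletons in $\Prim(A)$ are closed $\Rightarrow$ $\ker\rho=\ker\sigma$ $\Rightarrow$ $\rho\simeq\sigma$) are precisely the details left implicit.
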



Moreover, liminal $C^*$-algebras are automatically Type I, where we say a $C^*$-algebra $A$ is {\em Type I} if and only if for any two irreducible representations $\pi_1,\pi_2$ of $A$ with the same kernel are unitary equivalent. According to Proposition \ref{prop:canonical_homeomorphism}, we then have a homeomorphism between $\Prim(A)$ and $\wh{A}$ (see \cite[Thm IV.15.7 (p.339)]{Bla10} for details on these claims). We now begin with a few easy results which provide us with basic tools for using sequences in $\Repn(A)$ to investigate $\wh{A}$.

The following lemma is known to experts but we provide a proof for completeness.

\begin{lemma}\label{lemma:kernel_subrepresentations}
    Let $\sigma$ be a representation of $A$ in $\mc{H}$. If $\sigma\simeq\bigoplus_{j\in I}\sigma_j$ where each $\sigma_j$ is a representation of $A$ in $\mc{H}_j$, then $\ker\sigma=\bigcap_{j\in I}\ker\sigma_j$.
\end{lemma}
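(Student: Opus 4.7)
The plan is to reduce to equality of operators summand-by-summand, using only the explicit definition of the kernel and of a direct sum of representations.

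First I would observe that the kernel is invariant under unitary equivalence: if $U\sigma(a) = \sigma'(a)U$ for every $a \in A$ with $U$ a Hilbert space isomorphism, then $\sigma(a) = 0$ if and only if $\sigma'(a) = 0$, so $\ker\sigma = \ker\sigma'$. Hence, without loss of generality, I may replace $\sigma$ by the concrete direct sum $\bigoplus_{j \in I} \sigma_j$ acting on $\mathcal{H} = \bigoplus_{j \in I} \mathcal{H}_j$, as defined in Section~\ref{subsec:spectrum_defn}.

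Next, using the defining formula for the direct sum, for any $a \in A$ and any $(\xi_j)_{j \in I} \in \mathcal{H}$ we have
\[
\sigma(a)(\xi_j)_{j\in I} = (\sigma_j(a)\xi_j)_{j\in I}.
\]
So $a \in \ker\sigma$ means $(\sigma_j(a)\xi_j)_{j\in I} = 0$ for every $(\xi_j)_{j\in I}$, which in turn is equivalent (by taking vectors supported on a single coordinate) to $\sigma_j(a)\xi_j = 0$ for every $j \in I$ and every $\xi_j \in \mathcal{H}_j$; that is, $a \in \ker\sigma_j$ for each $j$. This gives the inclusion $\ker\sigma \subseteq \bigcap_j \ker\sigma_j$.

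Conversely, if $a \in \bigcap_j \ker\sigma_j$, then each coordinate of $\sigma(a)(\xi_j)_{j\in I}$ is zero, so the whole vector is the zero vector of $\mathcal{H}$, giving $a \in \ker\sigma$. The two inclusions yield the stated equality. There is no real obstacle here — the only small point to be careful about is that the argument works for arbitrary index sets $I$, but this is automatic since the direct-sum formula is defined coordinatewise and vectors supported on a single coordinate lie in $\mathcal{H}$.
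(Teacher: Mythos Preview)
Your proof is correct and follows essentially the same approach as the paper's: both argue the two inclusions by reducing to the coordinatewise action of the direct sum. The only cosmetic difference is that you dispose of the intertwining unitary at the outset by noting that kernels are preserved under equivalence, whereas the paper carries the unitary $U$ explicitly through both directions of the argument.
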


\begin{proof}
    Let $U:\mc{H}\rightarrow\bigoplus_{j\in I}\mc{H}_j$ be the linear isomorphism which intertwines $\sigma$ and $\bigoplus_{j\in I}\sigma_j$, i.e., for all $a\in A$, $U\sigma(a)=\l[\bigoplus_{j\in I}\sigma_j(a)\r]U$.

    $(\sse)$ Let $a\in \ker\sigma$. Let $\xi_j\in \mc{H}_j$ and define $\xi=U^{-1}\l[\bigoplus_{j\in I}\xi_j\r]\in\mc{H}$. Then, by assumption, $\sigma(a)\xi=0$. Thus, since $U\sigma(a)=\l[\bigoplus_{j\in I}\sigma_j(a)\r]U$, we see that 
    \[0=U\sigma(a)\xi=\l(\bigoplus_{j\in I}\sigma_j(a)\r)U\,U^{-1}\l[\bigoplus_{j\in I}\xi_j\r]=  \l(\bigoplus_{j\in I}\sigma_j(a)\xi_j\r).\]
    Thus, we must have that $\sigma_j(a)\xi_j=0$ for all $j\in I$. Because each $\xi_j\in \mc{H}_j$ was arbitrary, $a\in \ker\sigma_j$ for all $j\in I$.

    $(\supseteq)$ Suppose $a\in \bigcap_{j\in I}\ker\sigma_j$. Let $\xi\in\mc{H}$. Define $P_j:\bigoplus_{j\in I}\mc{H}_j\rightarrow\mc{H}_j$ to be the canonical projection for each $j\in I$ and let $\xi_j=P_jU\xi$. Since $U$ is an isomorphism and the $P_j$ are mutually orthogonal projections with $\te{Id}=\sum_{j\in I}P_j$ (the identity on $\bigoplus_{j\in I}\mc{H}_j$), we see that \[U\xi=\te{Id}\,U\xi=\sum_{j\in I}P_jU\xi=\bigoplus_{j\in I}\xi_j.\]

    Thus,
    \[0=\l[\bigoplus_{j\in I}\sigma_j(a)\r]\bigoplus_{j\in I}\xi_j=\l[\bigoplus_{j\in I}\sigma_j(a)\r]U\xi=U\sigma(a)\xi.\]
    Since $U$ is a linear isomorphism, we must have $\sigma(a)\xi=0$. Because $\xi\in\mc{H}$ was arbitrary, we conclude that $a\in\ker\sigma$.

\end{proof}



\begin{cor}\label{cor:kernel_containment}
    If $\pi=\bigoplus_{j\in I}\pi_j$, then $\ker\pi=\bigcap_{j\in I}\ker\pi_j$.
\end{cor}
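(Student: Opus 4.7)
The plan is to observe that this is essentially a direct specialization of Lemma \ref{lemma:kernel_subrepresentations}. That lemma handles the general case where $\sigma \simeq \bigoplus_{j \in I} \sigma_j$, i.e., where there exists a Hilbert space isomorphism intertwining the two representations. The corollary is the strict-equality version, where $\pi$ is literally given as the direct sum rather than merely equivalent to one.

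Concretely, I would invoke Lemma \ref{lemma:kernel_subrepresentations} with $\sigma := \pi$ and $\sigma_j := \pi_j$, using the identity map on $\bigoplus_{j \in I} \mc{H}_j$ as the intertwining unitary $U$. Since $\pi$ and $\bigoplus_{j \in I} \pi_j$ act on the same Hilbert space and agree pointwise on $A$ by hypothesis, the identity trivially satisfies $U \pi(a) = \bigl[\bigoplus_{j \in I} \pi_j(a)\bigr] U$ for every $a \in A$. The conclusion $\ker \pi = \bigcap_{j \in I} \ker \pi_j$ then follows immediately.

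There is no real obstacle here; the corollary is a notational convenience that removes the unitary $U$ from the statement when the direct sum decomposition is on the nose. If desired, one could alternatively give a one-line direct argument: $a \in \ker \pi$ iff $\pi(a)\xi = 0$ for every $\xi \in \bigoplus_{j \in I} \mc{H}_j$, iff $\pi_j(a)\xi_j = 0$ for every $j$ and every $\xi_j \in \mc{H}_j$ (by orthogonality of the summands), iff $a \in \ker \pi_j$ for all $j$. I would likely write the proof as a single sentence citing Lemma \ref{lemma:kernel_subrepresentations}.
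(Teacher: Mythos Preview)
Your proposal is correct and matches the paper's approach: the paper states this corollary immediately after Lemma~\ref{lemma:kernel_subrepresentations} with no separate proof, treating it as an immediate specialization. Your observation that one takes $U$ to be the identity on $\bigoplus_{j\in I}\mc{H}_j$ is exactly the intended reading.
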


If $\pi$ be a representation of $A$ and $\mc{S},\mc{T}$ sets of representations of $A$, we say that $\pi$ is {\em weakly contained} in $\mc{S}$ if $\cap_{\rho\in \mc{S}}\ker \rho\sse \ker \pi$ and we say that $\mc{T}$ is {\em weakly contained} in $\mc{S}$ if each element of $\mc{T}$ is weakly contained in $\mc{S}$.


For the next result, we introduce some notation.
    Suppose $\{\pi_{\lambda}\}_{\lambda\in\Lambda}$ is a net of nondegenerate representations of $A$ on $\hn$. For each $\lambda$, we have $\pi_{\lambda}\simeq\bigoplus_{j_{\lambda}\in I_{\lambda}} \pi_{\lambda}^{j_{\lambda}}$ where each $\pi_{\lambda}^{j_{\lambda}}$ is irreducible.
    Define
    \[\mc{T}(\{\pi_{\lambda}\}):=\{\pi_{\lambda}^{j_{\lambda}}:\lambda\in\Lambda,j_{\lambda}\in I_{\lambda}\}\sse\wh{A}\]
    the collection of all the irreducible subrepresentations of the net $\{\pi_{\lambda}\}$. We will abuse notation and think of $\mc{T}(\pi)$ to be the collection of these irreducible representations as arising from the constant net $\{\pi\}$.

\begin{prop}\label{prop:T_weak_containment}
    Suppose a net $\pi_{\lambda}\rightarrow\pi\simeq\bigoplus_{j\in I}\pi^j$ in $\Repnp(A)$ for some cardinal $n$ where each $\pi^j$ is irreducible. Then $\mc{T}(\pi)$ is weakly contained in $\mc{T}(\{\pi_{\lambda}\})$.
\end{prop}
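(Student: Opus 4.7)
The plan is to reduce the weak containment assertion to a simple limit argument about kernels, using the convergence $\pi_\lambda \to \pi$ in $\Repn'(A)$ combined with Corollary \ref{cor:kernel_containment}.

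First I would unpack the notation and compute the relevant intersection of kernels. For each $\lambda$, write $\pi_\lambda \simeq \bigoplus_{j_\lambda \in I_\lambda} \pi_\lambda^{j_\lambda}$ with each $\pi_\lambda^{j_\lambda}$ irreducible. By Corollary \ref{cor:kernel_containment},
\[
\ker \pi_\lambda \;=\; \bigcap_{j_\lambda \in I_\lambda} \ker \pi_\lambda^{j_\lambda},
\]
so taking the intersection over all $\lambda$ yields
\[
\bigcap_{\rho \,\in\, \mc{T}(\{\pi_\lambda\})} \ker \rho \;=\; \bigcap_{\lambda} \ker \pi_\lambda.
\]
The same corollary applied to $\pi \simeq \bigoplus_{j \in I} \pi^j$ gives $\ker \pi = \bigcap_{j \in I} \ker \pi^j$, so it suffices to establish the single inclusion $\bigcap_{\lambda} \ker \pi_\lambda \subseteq \ker \pi$; then each $\pi^j$ will be weakly contained in $\mc{T}(\{\pi_\lambda\})$, which is exactly the claim that $\mc{T}(\pi)$ is weakly contained in $\mc{T}(\{\pi_\lambda\})$.

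For the key inclusion, take $a \in \bigcap_\lambda \ker \pi_\lambda$, so $\pi_\lambda(a) = 0$ for every $\lambda$. Since $\pi_\lambda \to \pi$ in $\Repn'(A)$, this convergence is (equivalently) weak pointwise on $A$, so for every $\xi, \eta \in \hn$,
\[
\langle \pi(a)\xi, \eta \rangle_{\hn} \;=\; \lim_\lambda \langle \pi_\lambda(a)\xi, \eta \rangle_{\hn} \;=\; 0.
\]
Hence $\pi(a) = 0$, i.e., $a \in \ker \pi$. Combining with the chain of equalities above, $\bigcap_{\rho \in \mc{T}(\{\pi_\lambda\})} \ker \rho \subseteq \ker \pi \subseteq \ker \pi^j$ for each $j \in I$, which is the definition of weak containment of $\mc{T}(\pi)$ in $\mc{T}(\{\pi_\lambda\})$.

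There is essentially no obstacle: the entire content of the proposition is that the kernel of a pointwise-limit representation contains the common kernel of the approaching net, and the rest is a bookkeeping step using Corollary \ref{cor:kernel_containment} to pass between the direct-sum decompositions and their isotypic pieces. The only thing one must be mildly careful about is to ensure the convergence of $\pi_\lambda(a)$ to $\pi(a)$ is legitimate for an arbitrary $a \in A$ (not just group elements), which is immediate from the definition of the topology on $\Repn(A)$ as weak pointwise convergence over $A$ stated in Section \ref{subsec:concrete_versus_abstract}.
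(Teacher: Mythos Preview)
Your proof is correct and follows essentially the same route as the paper: both reduce the weak-containment claim to the single inclusion $\bigcap_\lambda \ker\pi_\lambda \subseteq \ker\pi$ via the kernel/direct-sum identity, then verify that inclusion by a pointwise-limit argument (the paper uses strong convergence where you use weak, but these are equivalent here by \cite[3.5.2]{Dix77} as noted in Section~\ref{subsec:concrete_versus_abstract}).
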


\begin{proof}
    We need to show that $\pi^j$ is weakly contained in $\mc{T}(\{\pi_{\lambda}\})$ for all $j\in I$, that is,
    \[\bigcap_{\lambda\in\Lambda}\bigcap_{j_{\lambda}\in I_{\lambda}}\ker\pi_{\lambda}^{j_{\lambda}}\sse\ker\pi^j\te{ for each }j\in I.\]
    Thus, we want
    \begin{equation}\label{weakcon}
    \bigcap_{\lambda\in\Lambda}\bigcap_{j_{\lambda}\in I_{\lambda}}\ker\pi_{\lambda}^{j_{\lambda}}\sse\bigcap_{j\in I}\ker\pi^j.
\end{equation}

    By Lemma \ref{lemma:kernel_subrepresentations}, we know that $\ker\pi_{\lambda}=\bigcap_{j_{\lambda}\in I_{\lambda}}\ker\pi_{\lambda}^{j_{\lambda}}$ and $\ker\pi=\bigcap_{j\in I}\ker\pi^j$. Rewriting equation (\ref{weakcon}), we must show
    \begin{equation}\label{weakcon2}
    \bigcap_{\lambda\in \Lambda}\ker\pi_{\lambda}\sse\ker\pi.
    \end{equation}

    Now, let $a\in \bigcap_{\lambda\in \Lambda}\ker\pi_{\lambda}$. Because we are assuming that $\pi_{\lambda}\rightarrow\pi$ in $\Repnp(A)$, for all $\xi\in \hn$,
    $$\|\pi_{\lambda}(a)\xi-\pi(a)\xi\|_{\hn}\rightarrow 0.$$
    Of course, $\pi_{\lambda}(a)\xi=0$ by choice and so this becomes
    $$\|\pi(a)\xi\|_{\hn}\rightarrow 0.$$
    Since $\pi(a)\xi$ is a constant sequence, we conclude that $\pi(a)\xi=0$. As $\xi\in\hn$ was arbitrary, we must have $a\in\ker\pi$ as desired.

\end{proof}

\begin{prop}\label{prop:sequence_to_subrepresentations}
    If $\pi_k\rightarrow\pi$ in $\Repn(A)$ where $\{\pi_k\}\sse\Irrn(A)$ and where $\pi\simeq\bigoplus_{j\in I}\pi^j$ for $\{\pi^j\}\sse {}_n\wh{A}$, then $\pi_k\rightarrow \pi^j$ in $\wh{A}$ for all $j\in I$.
\end{prop}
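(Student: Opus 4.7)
The plan is to apply Proposition \ref{prop:T_weak_containment} not only to the full sequence $\{\pi_k\}$, but to every subsequence, and then translate the resulting family of weak containment statements into convergence in the Fell topology.

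First, I would unwind the topology. Because the Fell topology on $\wh{A}$ is the pullback of the Jacobson topology on $\Prim(A)$ under $[\rho]\mapsto\ker\rho$, and the closure of a family $\mathfrak{P}\sse\Prim(A)$ consists precisely of those primitive ideals containing $\mc{I}(\mathfrak{P})$, the condition $[\pi_k]\to[\pi^j]$ in $\wh{A}$ unwinds into
\[\bigcap_{\ell}\ker\pi_{k_\ell}\,\sse\,\ker\pi^j\qquad\text{for every subsequence }\{\pi_{k_\ell}\}\text{ of }\{\pi_k\}.\]
That is, $\pi^j$ must be weakly contained in every subsequence of $\{\pi_k\}$, not merely in the full sequence | the stronger ``subsequence'' formulation is what captures convergence (as opposed to mere cluster-point behavior) in the non-Hausdorff Jacobson topology.

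Fortunately, the hypothesis $\pi_k\to\pi$ in $\Repn(A)$ passes to any subsequence, so $\pi_{k_\ell}\to\pi$ in $\Repn(A)$ as well. Each $\pi_k$ is irreducible, hence nondegenerate, and each $\pi^j$ is irreducible, so $\pi\simeq\bigoplus_j\pi^j$ is nondegenerate too; the entire convergence takes place inside $\Repnp(A)$. I may therefore invoke Proposition \ref{prop:T_weak_containment} on each subsequence $\{\pi_{k_\ell}\}$ converging to $\pi$. Because every $\pi_{k_\ell}$ is itself already irreducible, $\mc{T}(\{\pi_{k_\ell}\})=\{\pi_{k_\ell}:\ell\geq 1\}$, while $\mc{T}(\pi)=\{\pi^j:j\in I\}$ is the given decomposition into irreducibles. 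The weak containment of $\mc{T}(\pi)$ in $\mc{T}(\{\pi_{k_\ell}\})$ is then exactly the displayed kernel containment, for each $j\in I$.

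The main obstacle is topological bookkeeping: the target is convergence in the non-Hausdorff Fell topology on $\wh{A}$, where a single sequence may legitimately converge to several distinct points (as Remark \ref{rmk:hausdorff_justification} foreshadows). Proposition \ref{prop:T_weak_containment} hands over weak containment of $\pi^j$ in the whole sequence for free; the real work is in recognizing that feeding it every subsequence is precisely what upgrades this to Fell convergence $[\pi_k]\to[\pi^j]$.
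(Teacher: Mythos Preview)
Your argument is correct, but it follows a different route from the paper's. The paper argues directly at the level of the Jacobson topology: fixing $j\in I$ and an arbitrary open neighborhood $\mc{O}_j$ of $\ker\pi^j$ in $\Prim(A)$, it writes the complementary closed set as $\{J:M\sse J\}$ for some $M\sse A$, extracts an element $a\in M\setminus\ker\pi^j\sse M\setminus\ker\pi$, and then uses the strong convergence $\pi_k\to\pi$ on $\hn$ to show $\|\pi_k(a)\xi_a\|$ is eventually bounded away from $0$, so $a\notin\ker\pi_k$ and hence $\ker\pi_k\in\mc{O}_j$ eventually.

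You instead recycle Proposition~\ref{prop:T_weak_containment}. Your key observation is the general topological fact that a sequence $x_k$ converges to $x$ if and only if $x$ lies in the closure of the range of every subsequence; in $\Prim(A)$ this reads exactly as your displayed kernel containment. Since $\pi_k\to\pi$ in $\Repnp(A)$ passes to subsequences, feeding each subsequence into Proposition~\ref{prop:T_weak_containment} delivers precisely those containments. This is more modular and makes the role of weak containment explicit; the price is that the subsequence criterion for convergence, while elementary (if $x_k\not\to x$, the infinitely many terms avoiding some open $U\ni x$ form a subsequence whose closure misses $x$), is asserted rather than proved in your write-up. The paper's approach is more self-contained and never needs to name that criterion, at the cost of essentially re-deriving the analytic content of Proposition~\ref{prop:T_weak_containment} inside the proof.
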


\begin{proof}
    Fix $j\in I$ and let $\mc{O}_j\sse\Prim(A)$ be an open set containing $\ker\pi^j$. Then, of course, $\mc{F}_j=\Prim(A)\setminus\mc{O}_j$ is closed in $\Prim(A)$ and does not contain $\ker\pi^j$. By Proposition 3.1.2 in \cite{Dix77}, there exists a subset $M\sse A$ such that $\mc{F}_j$ is the set of all primitive ideals of $A$ containing $M$. That is, let
    $$\mc{F}_j=\mc{I}(M):=\{\ker\rho\,:\,\rho\in\wh{A}\te{ and } M\sse \ker\rho\}.$$

    Because $\ker\pi^j\not\in\mc{F}_j=\mc{I}(M)$, we see that $M$ is not contained in $\ker\pi^j$. This means there exists $a\in M$ such that $a\not\in \ker\pi^j$. By Corollary \ref{cor:kernel_containment}, $\ker\pi=\bigcap_{j\in I}\ker\pi^j$ and so $a\not\in \ker\pi$. Thus, there exists $\xi_a\in \hn$ such that $c=\normspace{\hn}{\pi(a)\xi_a}>0$.

    By assumption, $\pi_k\rightarrow\pi$ in $\Repn(A)$. Fix $\ve=\dfrac{c}{2}>0$. Then there exists $K\in\zn_{>0}$ such that $k\geq K$ implies
    \begin{align*}
    &\normspace{\hn}{\pi_k(a)\xi_a-\pi(a)\xi_a}<\ve=\dfrac{c}{2}\\
    \Rightarrow\hspace{.15in}&\bigg|\normspace{\hn}{\pi_k(a)\xi_a}-\normspace{\hn}{\pi(a)\xi_a}\bigg|<\dfrac{c}{2}\\
    \iff\hspace{.15in}\dfrac{c}{2}&<\normspace{\hn}{\pi_k(a)\xi_a}
    \end{align*}
    Thus, we see that $\normspace{\hn}{\pi_k(a)\xi_a}>\dfrac{c}{2}>0$ for all $k\geq K$. In particular, this means that $a\not\in\ker\pi_k$ for all $k\geq K$ and consequently $M$ is not contained in any of these $\ker\pi_k$. Hence, $\ker\pi_k\not\in\mc{I}(M)=\mc{F}_j$ for all $k\geq K$. Of course, this means these $\ker\pi_k$ are contained in $\mc{O}_j=\Prim(A)\setminus\mc{F}_j$. 

    We conclude by the homeomorphism between $\wh{A}$ and $\Prim(A)$ that $[\pi_k]\rightarrow[\pi^j]$ in $\wh{A}$.

\end{proof}

Before we prove our next theorem, we include a useful result from Fell (though this particular phrasing is courtesy \cite{Dav96}).

\begin{lemma}[\cite{Fel60} Lemma 2.2 (p.380)]\label{lemma:net_convergence}
    A net $\sigma_{\alpha}$ in $\wh{A}$ has $\rho$ as a limit point if and only 
    $$\liminf_{\alpha}\|\sigma_{\alpha}(a)\|_{\te{op}}\geq \|\rho(a)\|_{\te{op}}\quad\te{ for all }a\in A.$$
\end{lemma}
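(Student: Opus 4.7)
The plan is to translate the statement into a concrete topological characterization using two facts that are standard in $C^*$-algebra theory: first, that the Fell topology on $\wh{A}$ is by construction the pullback of the Jacobson topology, whose closed sets are exactly those of the form $F_I = \{[\pi]\in\wh{A}\colon I\sse\ker\pi\}$ for a two-sided ideal $I\sse A$; and second, that for each fixed $a\in A$ the function $[\pi]\mapsto \|\pi(a)\|_{\te{OP}}$ is lower semicontinuous on $\wh{A}$ (this is Dixmier 3.3.2, and the key analytic input we need). Given these two ingredients, the equivalence becomes a straightforward unwinding of what convergence and ideal membership say about $\|\sigma_\alpha(a)\|_{\te{OP}}$.

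For the forward direction, assume $\sigma_\alpha\to\rho$ in $\wh{A}$. Fix $a\in A$ and any real number $c<\|\rho(a)\|_{\te{OP}}$. By lower semicontinuity, the set
\[U_c:=\{[\pi]\in\wh{A}\colon \|\pi(a)\|_{\te{OP}}>c\}\]
is open, and it contains $[\rho]$ by the choice of $c$. Convergence then yields an index $\alpha_0$ such that $\sigma_\alpha\in U_c$, hence $\|\sigma_\alpha(a)\|_{\te{OP}}>c$, for every $\alpha\geq \alpha_0$. Thus $\liminf_\alpha \|\sigma_\alpha(a)\|_{\te{OP}}\geq c$. Letting $c\nearrow \|\rho(a)\|_{\te{OP}}$ produces the desired inequality, and $a$ was arbitrary.

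For the converse, assume $\liminf_\alpha \|\sigma_\alpha(a)\|_{\te{OP}}\geq\|\rho(a)\|_{\te{OP}}$ for every $a\in A$. To show $\sigma_\alpha\to[\rho]$ it suffices to show that for every closed set $F\sse\wh{A}$ with $[\rho]\notin F$ the net is eventually outside $F$. Writing $F=F_I$ for some ideal $I\sse A$ as above, the condition $[\rho]\notin F$ means $I\not\sse\ker\rho$, so we may choose $a\in I$ with $\|\rho(a)\|_{\te{OP}}>0$. The hypothesis $\liminf_\alpha \|\sigma_\alpha(a)\|_{\te{OP}}\geq\|\rho(a)\|_{\te{OP}}>0$ forces $\|\sigma_\alpha(a)\|_{\te{OP}}>0$ eventually; equivalently, $a\notin\ker\sigma_\alpha$ eventually, which gives $I\not\sse\ker\sigma_\alpha$ and hence $\sigma_\alpha\notin F$ eventually. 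This proves convergence.

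The main conceptual obstacle, and the only step that is not elementary topology, is the lower semicontinuity of $[\pi]\mapsto\|\pi(a)\|_{\te{OP}}$; but this is precisely the result we are permitted to import from Dixmier, so the rest of the argument reduces to carefully matching open sets of $\wh{A}$ to norm inequalities and closed sets of $\wh{A}$ to ideal containments. Once lower semicontinuity is in hand, both implications are essentially the statement that a net eventually enters every open neighborhood of $[\rho]$ if and only if the norms $\|\sigma_\alpha(a)\|_{\te{OP}}$ are asymptotically bounded below by $\|\rho(a)\|_{\te{OP}}$.
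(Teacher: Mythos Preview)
The paper does not prove this lemma; it simply cites Fell's original result and invokes it as a black box in the proof of Theorem~\ref{theorem:sequence_to_subrepresentations}. Your proof is correct and is essentially the standard argument: the forward direction is an immediate consequence of lower semicontinuity of $[\pi]\mapsto\|\pi(a)\|_{\te{op}}$ (Dixmier 3.3.2), and the converse unwinds the hull--kernel description of closed sets in $\wh{A}$ to produce, for each open neighborhood of $[\rho]$, an element $a$ witnessing that the net eventually enters it. One small point of phrasing worth flagging: the lemma says ``has $\rho$ as a limit point,'' which in non-Hausdorff settings is sometimes read as ``cluster point'' rather than ``limit''; your argument (and the paper's use of the lemma) treat it as meaning the net converges to $[\rho]$, which is consistent with the $\liminf$ in the statement and with Fell's original formulation, so there is no issue.
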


\begin{thm}\label{theorem:sequence_to_subrepresentations}
    Let $A$ be a liminal $C^*$-algebra and fix $n\in\zn_{>0}$. Suppose $\pi_k\rightarrow\pi$ in $\Repn(A)$ such that $\{\pi_k\}\sse\Irrn(A)$ and where $\pi\simeq\bigoplus_{j\in I}\pi^j$ for $\{[\pi^j]\}\sse {}_n\wh{A}$. If $[\pi_k]\rightarrow [\pi']\in \wh{A}$, then there exists $j\in I$ such that $\pi'\simeq\pi^j$.
\end{thm}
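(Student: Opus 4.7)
The plan is to convert the topological convergence $[\pi_k]\to[\pi']$ in $\wh{A}$ into a concrete operator-norm inequality via Fell's criterion (Lemma~\ref{lemma:net_convergence}), then combine it with the entrywise convergence in $\Repn(A)$ to produce an inclusion of kernels, and finally exploit the liminality of $A$ through Corollary~\ref{cor:closed_points_app} to extract the desired equivalence.

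First, since $[\pi']$ is a limit point of $\{[\pi_k]\}$, Lemma~\ref{lemma:net_convergence} yields
\[
\liminf_{k}\opnorm{\pi_k(a)}\ge \opnorm{\pi'(a)}\quad\text{for every }a\in A.
\]
Because $\mc{H}_n$ is finite dimensional, all norms on $\bsf(\mc{H}_n)$ are equivalent, so the strong pointwise convergence $\pi_k\to\pi$ in $\Repn(A)$ is in fact operator-norm convergence pointwise; hence $\opnorm{\pi_k(a)}\to\opnorm{\pi(a)}$ for each $a\in A$. Combining these two observations, $\opnorm{\pi(a)}\ge\opnorm{\pi'(a)}$ for all $a\in A$, which immediately gives $\ker\pi\sse\ker\pi'$.

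Next, Corollary~\ref{cor:kernel_containment} expresses $\ker\pi=\bigcap_{j\in I}\ker\pi^j$. Since $\pi$ acts on the finite-dimensional Hilbert space $\mc{H}_n$, the indexing set $I$ must be finite (of cardinality at most $n$). I would then invoke the standard fact that every primitive ideal in a $C^*$-algebra is prime: as $\ker\pi'$ is primitive and the finite intersection $\bigcap_{j\in I}\ker\pi^j\sse\ker\pi'$, there must exist a single $j_0\in I$ with $\ker\pi^{j_0}\sse\ker\pi'$. Corollary~\ref{cor:closed_points_app}, whose proof relies on the liminality of $A$, then promotes this inclusion of kernels to the equivalence $\pi^{j_0}\simeq\pi'$, completing the argument.

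The main obstacle I expect is the step from $\ker\pi\sse\ker\pi'$ to $\ker\pi^{j_0}\sse\ker\pi'$ for a specific $j_0\in I$; this is exactly where primeness of primitive ideals together with the finiteness of $I$ (guaranteed by $\dim\mc{H}_n<\infty$) are both essential, and without either one, the argument would collapse. The preliminary conversion of strong convergence to operator-norm convergence is harmless precisely because $n\in\zn_{>0}$, and the concluding step is a direct appeal to a corollary already recorded in the excerpt.
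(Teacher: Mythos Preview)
Your proposal is correct and relies on the same core ingredients as the paper's proof---Fell's operator-norm criterion (Lemma~\ref{lemma:net_convergence}), primeness of primitive ideals, finiteness of $I$, and Corollary~\ref{cor:closed_points_app}---but organizes them differently. The paper argues by contradiction: assuming no $\pi^j\simeq\pi'$, it chooses $a_j\in\ker\pi^j\setminus\ker\pi'$ for each $j$, forms the finite product $a=\prod_{j\in I}a_j$, uses primeness to guarantee $a\notin\ker\pi'$, and then shows directly from strong convergence that $\liminf_k\opnorm{\pi_k(a)}=0<\opnorm{\pi'(a)}$, contradicting Fell's lemma. You instead proceed forward: exploiting finite-dimensionality of $\mc{H}_n$ to upgrade strong pointwise convergence to operator-norm convergence, you obtain $\opnorm{\pi(a)}\ge\opnorm{\pi'(a)}$ for every $a$, hence $\ker\pi\sse\ker\pi'$ at once, and only then invoke primeness to isolate a single $j_0$. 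Your route is a bit more streamlined since it sidesteps the explicit product construction, while the paper's version has the minor advantage of never needing the norm-equivalence observation; both rest on identical facts.
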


\begin{proof}
    Firstly, that $\frsub{n}\wh{A}$ is closed in $\wh{A}$. Because $\{[\pi_k]\}\sse\wh{A}_n$ converges to $[\pi']$, we must have that $\dim\pi'\leq n$ by closedness.

    Secondly, suppose that $\pi'\not\simeq \pi^j$ for any $j\in I$. This means that $\ker\pi'\neq\ker\pi^j$ for all $j\in I$. We observe that, by the proof of Proposition \ref{prop:sequence_to_subrepresentations}, 
    $$\bigcap_k\ker\pi_k\sse\ker\pi^j\cap \ker \pi'\te{ for all }j\in I.$$

    If $\ker\pi^j\sse\ker \pi'$ for some $j\in I$, then Corollary \ref{cor:closed_points_app} implies $\pi^j\simeq \pi'$. So instead, we assume for each $j\in I$ that there exists $a_j\in \ker\pi^j$ such that $a_j\not\in \ker \pi'$. Define $a=\prod_{j\in I} a_j$ (where we note that $I$ is a necessarily finite set). Then $a\in \ker\pi^j$ for all $j\in\ I$ and $a\not\in \ker\pi'$ because primitive ideals are prime (\cite[II.6.5.15 (p.115)]{Bla10}). In particular, we note that $\pi^j(a)\xi=0\te{ for all }\xi\in \mc{H}_n$ and there exists $\xi_0\in\mc{H}_n$ such that $\pi'(a)\xi_0\neq 0$.

    Since $\pi_k\rightarrow \pi\in \Repn(A)$, for all $\xi\in \mc{H}_n$, 
    \begin{align*}
    \|\pi_k(a)\xi-\pi(a)\xi\|_{\hn}&=\|\pi_k(a)\xi\|_{\hn}\rightarrow 0.
    \end{align*}
    Hence, $\liminf_k\|\pi_k(a)\|_{\te{op}}=0$. However, $\|\pi'(a)\xi_0\|_{\hn}>0$ and so $\|\pi'(a)\|_{\te{op}}>0$. Lemma \ref{lemma:net_convergence} implies that $[\pi_k]$ does not converge to $[\pi']$ in $\wh{A}$, a contradiction.

    Therefore, we must have $\ker\pi'=\ker\pi^j$ for some $j\in I$, i.e., $\pi'\simeq \pi^j$.

\end{proof}

\subsection{How Convergence in \str{$\wh{N}$}{N-hat} Influences Convergence in \str{$\wh{G}$}{G-hat}}\label{subsec:convergence_N_hat}

We want to specialize the investigation to the topology of unitary duals arising from crystallographic groups (though the arguments should also follow for discrete, finitely generated, virtually abelian groups). We begin by establishing that the associated group $C^*$-algebras are separable and liminal. Note that Thoma's famed result about virtually abelian groups (see \cite{Tho68}) ensures that $C^*(G)$ is Type I (which also follows whenever $C^*(G)$ is liminal).

Because each crystallographic group $G$ is finitely presented, they are all countable in addition to being discrete and are, thus, separable. This means that all associated $C^*(G)$ are separable (\cite[13.9.2 (p.303)]{Dix77}). 

We say a $C^*$-algebra $A$ is {\em subhomogeneous} if there exists $M\in\zn_{>0}$ such that $\wh{A}={}_M\wh{A}$. In particular, we note that subhomogeneous $C^*$-algebras are liminal.

\begin{thm}[\cite{Moo72}, Thm 1 (p.402)]\label{thm:bounded_representations}
    If $G$ is a locally compact group, then there is an integer $M$ such that $\dim \pi\leq M<\infty$ for all $\pi\in\wh{G}$ if and only if there is an open abelian subgroup of finite index in $G$.
\end{thm}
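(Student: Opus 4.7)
The plan is to establish the biconditional by treating each direction separately; the reverse implication ($\Leftarrow$) follows from Mackey-style analysis of restriction to a normal abelian subgroup, while the forward direction ($\Rightarrow$) is substantially deeper and is really the content of Moore's theorem.

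For the easy direction, first replace $A$ by its normal core $A_0=\bigcap_{g\in G}gAg^{-1}$, a finite intersection of open abelian subgroups that is again open, abelian, normal, and of finite index in $G$. Given $\pi\in\wh{G}$, Mackey's analysis applied to $A_0\tleq G$ shows that the spectrum of $\pi|_{A_0}$ is a single $G$-orbit of some $\chi\in\wh{A_0}$, which has size $k\leq[G:A_0]$ since $[G:A_0]<\infty$. The stabilizer $G_\chi$ then acts projectively on the $\chi$-isotypic subspace of $\mc{H}_\pi$, producing an irreducible projective representation of the finite group $G_\chi/A_0$ of some dimension $m\leq[G_\chi:A_0]$. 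Hence $\dim\pi=k\cdot m\leq[G:G_\chi]\cdot[G_\chi:A_0]=[G:A_0]<\infty$, and we may take $M=[G:A_0]$.

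For the hard direction, assume $\dim\pi\leq M$ for every $\pi\in\wh{G}$. The plan has four stages: (i) deduce that $C^*(G)$ is subhomogeneous, hence liminal and Type~I; (ii) observe that each $\pi$ descends to a continuous homomorphism $G\to\te{PU}(d_\pi)$ with $d_\pi\leq M$, landing in the compact Lie group $\te{PU}(d_\pi)$; (iii) intersect the kernels of this family to obtain a candidate subgroup $K\leq G$, and argue, via rigidity of $\te{PU}(M)$ combined with local compactness of $G$, that $K$ is open in $G$ and essentially central; (iv) apply a structure theorem for locally compact groups with bounded-dimensional projective representations to upgrade $K$ (or a refinement) to an open abelian subgroup of finite index.

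The main obstacle is stages (iii)--(iv). The hypothesis $\dim\pi\leq M$ is a pointwise statement on $\wh{G}$, whereas the conclusion is a uniform structural statement about $G$ itself, and bridging that gap requires the delicate interaction between projective rigidity inside the finite-dimensional $\te{PU}(M)$ and the topology of $G$ that forms the technical heart of \cite{Moo72}. Since we invoke this theorem only as a black box (in order to conclude subhomogeneity of $C^*(G)$ for crystallographic $G$), we would defer to \cite{Moo72} rather than reproduce the proof here.
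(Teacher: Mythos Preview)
The paper does not prove this theorem at all; it is quoted verbatim from \cite{Moo72} and used purely as a black box to deduce that $C^*(G)$ is subhomogeneous for crystallographic $G$. So there is no ``paper's own proof'' to compare against.

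Your proposal therefore goes further than the paper does. Your argument for the easy direction ($\Leftarrow$) is correct and is more than the paper provides. For the hard direction ($\Rightarrow$) you correctly identify that this is the genuine content of Moore's theorem and that reproducing it is neither feasible nor necessary here; your explicit deferral to \cite{Moo72} is exactly the stance the paper takes. One small comment: your stage~(i) is circular as phrased, since ``$C^*(G)$ is subhomogeneous'' is essentially a restatement of the hypothesis rather than a consequence to be deduced, but since you ultimately defer the whole direction this does not matter.
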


Putting this together yields the following corollary.

\begin{cor}\label{cor:crystallography_group_liminal}
    If $G$ is a crystallographic group, then $C^*(G)$ is separable and subhomogeneous. In particular, it is liminal and Type I.
\end{cor}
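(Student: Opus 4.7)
The plan is to verify the two named properties (separable and subhomogeneous) separately, and then invoke the implications already recorded in the text to obtain liminality and Type I.

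For separability, I would argue as follows. A crystallographic group $G$ fits into the short exact sequence $1\to N\to G\to D\to 1$ with $N\cong\zn^r$ and $D$ finite, so $G$ is a countable discrete group; in particular, it is finitely presented (as noted immediately above the statement). The standing reference \cite[13.9.2 (p.303)]{Dix77} then gives that $C^*(G)$ is separable for every countable discrete $G$, so this half is essentially a quotation.

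For subhomogeneity, the key input is Theorem \ref{thm:bounded_representations}. Because $G$ is discrete, every subgroup is open; in particular, the lattice $N\cong\zn^r$ is an open abelian subgroup, and by Definition \ref{defn:crystallography_group} it has finite index $|D|$ in $G$. Moore's theorem therefore produces an integer $M$ (one can take $M=|D|$ but the existence is what matters) such that $\dim\pi\leq M$ for every $\pi\in\wh{G}$. Transporting this through the dimension-preserving bijection $\wh{C^*(G)}\approx\wh{G}$ recorded at the end of Section \ref{subsec:spectrum_defn}, we get $\wh{C^*(G)}={}_M\wh{C^*(G)}$, which is precisely the definition of subhomogeneity given just before Theorem \ref{thm:bounded_representations}.

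Finally, the ``in particular'' clause is a direct appeal to facts already stated in the paper: subhomogeneous $C^*$-algebras are liminal by the remark preceding Theorem \ref{thm:bounded_representations}, and liminal $C^*$-algebras are Type I as noted at the start of Section \ref{subsec:spectrum_liminal_Cstar}. No step poses a genuine obstacle; the only thing to be careful about is simply checking that $N$ is \emph{open} in $G$ so that Moore's theorem applies, which is automatic from discreteness of $G$.
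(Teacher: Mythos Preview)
Your proposal is correct and matches exactly the approach the paper intends: the paper does not write out a proof at all but simply says ``Putting this together yields the following corollary,'' referring to the separability discussion via \cite[13.9.2]{Dix77} and to Moore's Theorem \ref{thm:bounded_representations} applied to the open abelian finite-index subgroup $N$, together with the already-noted implications subhomogeneous $\Rightarrow$ liminal $\Rightarrow$ Type I. Your write-up is a faithful unpacking of precisely that.
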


\begin{rmk}
We believe many of the following arguments can be generalized to finitely presented discrete virtually abelian groups. This extension is the focus of a future project.
\end{rmk}

We also take a moment to note that, because our $C^*$-algebras are subhomogeneous, we are guaranteed to not miss vital spectral information by only focusing on finite dimensional representations. 

Now, we want to investigate how the topology of $\wh{N}$ impacts the topology of $\wh{G}$. First, we rephrase Proposition \ref{prop:sequence_to_subrepresentations} in terms of $\Repn(G)$ and $\wh{G}$. Afterwards, we focus on sequences in $\wh{N}$.

\begin{prop}
    Let $\{\pi_k\}\sse\Irrn(G)$ and $\pi\in\Repn(G)$ where $\pi\simeq \bigoplus_{j\in I}\pi^j$ ($\{[\pi^j]\}\sse{}_n\wh{G}$). If, for all $\xi\in\hn$ and $s\in G$, we have
    $$\normspace{\hn}{\pi_k(s)\xi-\pi(s)\xi}\rightarrow 0,$$
    then $[\pi_k]\rightarrow [\pi^j]$ in $\wh{G}$ for all $j\in I$.
\end{prop}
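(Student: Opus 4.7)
The plan is to reduce this statement to the $C^*$-algebraic version, namely Proposition \ref{prop:sequence_to_subrepresentations}, by transporting everything from the group setting to the setting of the full group $C^*$-algebra $C^*(G)$. All the machinery is already in place; the proof should essentially be a translation along canonical correspondences.

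First I would pass to the $C^*$-algebraic side. For each $\sigma \in \Repn(G)$, let $\tilde{\sigma}$ denote the corresponding nondegenerate representation of $C^*(G)$ on $\hn$. By the hypothesis, $\|\pi_k(s)\xi - \pi(s)\xi\|_{\hn} \to 0$ for all $s \in G$ and $\xi \in \hn$, which is condition (4) of Proposition \ref{prop:facts_list}. Applying the equivalence of (4) and (2) in that proposition yields $\tilde{\pi}_k \to \tilde{\pi}$ in $\Repn(C^*(G))$.

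Next I would verify that the hypotheses of Proposition \ref{prop:sequence_to_subrepresentations} are met on the $C^*$-algebra side. Since the correspondence $\sigma \mapsto \tilde\sigma$ preserves dimension and carries irreducibility to irreducibility (and preserves direct sums, as it is given by integration against $\sigma$), we have $\{\tilde{\pi}_k\} \subseteq \Irrn(C^*(G))$ and $\tilde{\pi} \simeq \bigoplus_{j\in I} \widetilde{\pi^j}$ with each $[\widetilde{\pi^j}] \in {}_n\wh{C^*(G)}$. Then Proposition \ref{prop:sequence_to_subrepresentations} (applied with $A = C^*(G)$) gives $[\tilde{\pi}_k] \to [\widetilde{\pi^j}]$ in $\wh{C^*(G)}$ for every $j \in I$.

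Finally I would transport back to $\wh{G}$. As recorded at the end of Section \ref{subsec:spectrum_defn}, the dimension-preserving bijection between irreducible unitary representations of $G$ and irreducible representations of $C^*(G)$ is a homeomorphism $\wh{C^*(G)} \approx \wh{G}$. Under this homeomorphism $[\tilde{\pi}_k] \mapsto [\pi_k]$ and $[\widetilde{\pi^j}] \mapsto [\pi^j]$, so the convergence $[\tilde{\pi}_k] \to [\widetilde{\pi^j}]$ transfers to $[\pi_k] \to [\pi^j]$ in $\wh{G}$ for all $j \in I$, as required.

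There is essentially no obstacle here; the only points to be careful about are that each $\tilde\pi_k$ is genuinely nondegenerate (automatic, since unitary representations of a discrete group correspond to nondegenerate representations of $C^*(G)$) and that the correspondence $\sigma \mapsto \tilde\sigma$ commutes with taking direct sums, which is clear from the construction of $\tilde\sigma$ as the integrated form of $\sigma$. Hence the proposition follows immediately from the $C^*$-algebraic analogue.
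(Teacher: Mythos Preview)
Your proposal is correct and follows exactly the paper's approach: the paper's proof is the one-liner ``Combine Propositions \ref{prop:facts_list} and \ref{prop:sequence_to_subrepresentations} for $A=\cstarg$,'' and you have simply spelled out the translation in detail.
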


\begin{proof}
    Combine Propositions \ref{prop:facts_list} and \ref{prop:sequence_to_subrepresentations} for $A=\cstarg$.
\end{proof}

\begin{prop}\label{prop:conv_chars}
    Suppose $\chi_k\rightarrow\chi\in\wh{N}\cong\tn^{r}$, then $G_{\chi_k}\leq G_{\chi}$ eventually.
\end{prop}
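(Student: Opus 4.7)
The plan is to exploit the finiteness of the point group $D=G/N$: since $N\leq G_{\chi_k}$ and $N\leq G_\chi$ always, we only need to track which of the finitely many cosets of $N$ in $G$ lie in the stabilizer. Concretely, fix coset representatives $g_1,\ldots,g_m$ (with $m=|D|$) of $N$ in $G$, so every stabilizer is determined by which of these $g_i$ it contains. It then suffices to prove, for each fixed $i$, that $g_i\in G_{\chi_k}$ for infinitely many $k$ implies $g_i\in G_\chi$; then taking the maximum of finitely many thresholds gives the uniform ``eventually''.

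To do this, I would first record that the action $G\curvearrowright\wh{N}$ is continuous in the second variable. This is immediate from the definition $g\cdot\chi(n)=\chi(g^{-1}ng)$ and the fact that the topology on $\wh{N}\cong\tn^r$ is pointwise convergence: if $\chi_k\to\chi$ then for every $n\in N$, $(g\cdot\chi_k)(n)=\chi_k(g^{-1}ng)\to\chi(g^{-1}ng)=(g\cdot\chi)(n)$, hence $g\cdot\chi_k\to g\cdot\chi$ in $\wh{N}$.

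With continuity in hand, suppose $g_i\notin G_\chi$, i.e.\ $g_i\cdot\chi\neq\chi$. Since $\wh{N}\cong\tn^r$ is Hausdorff, we can pick disjoint open neighborhoods separating $g_i\cdot\chi$ from $\chi$. By continuity of the action, $g_i\cdot\chi_k\to g_i\cdot\chi$, and by hypothesis $\chi_k\to\chi$, so there exists $K_i$ such that for all $k\geq K_i$ we have $g_i\cdot\chi_k\neq\chi_k$, i.e.\ $g_i\notin G_{\chi_k}$.

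Finally, set $K:=\max\{K_i\colon g_i\notin G_\chi\}$, which is well-defined because the index set is finite (bounded by $|D|$). For every $k\geq K$ and every coset representative $g_i\notin G_\chi$ we have $g_i\notin G_{\chi_k}$; combined with $N\leq G_{\chi_k}$, this forces $G_{\chi_k}\leq G_\chi$. There is no real obstacle here beyond noting that Hausdorffness of $\tn^r$ and finiteness of $D$ let us promote a pointwise ``eventually'' to a uniform one.
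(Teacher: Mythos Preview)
Your proof is correct and uses essentially the same idea as the paper: continuity of the $G$-action on $\wh{N}$ together with finiteness of $D=G/N$. The only difference is organizational---you argue the contrapositive over the finitely many coset representatives and take a maximum threshold, whereas the paper passes to a subsequence with constant stabilizer $G_*$ and shows directly via the triangle inequality that each $g\in G_*$ fixes $\chi$; both amount to the same continuity argument.
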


\begin{proof}\mbox{}
    Because $G$ contains at most a finite number of subgroups containing $N$, we have a finite number of choices for stabilizers of characters of $N$. Thus, we may pass to a subsequence such that $G_{\chi_k}$ is fixed for all $k$. Fix $g\in G_{\chi_k}$ and $a\in N$. Write
        \begin{align*}
        |g\cdot\chi(a)-\chi(a)|&=|g\cdot\chi(a)-g\cdot\chi_k(a)+g\cdot\chi_k(a)-\chi(a)|\\
        &\leq|g\cdot\chi(a)-g\cdot\chi_k(a)|+|g\cdot\chi_k(a)-\chi(a)|\\
        &=|\chi(g^{-1}\cdot a)-\chi_k(g^{-1}\cdot a)|+|\chi_k(a)-\chi(a)|\longrightarrow 0
        \end{align*}
    Thus, $g\cdot\chi(a)=\chi(a)$. 
    We observe that $N$ is finitely generated, so we may conclude that $g$ fixes $\chi$ on all generators of $N$  {\em simultaneously} and, therefore, on all of $N$. Hence, $g\in G_{\chi}$.

\end{proof}

Let $\chi\in\wh{N}$ and $|\chi|:=|\mc{O}_{\chi}|$. A quick application of Proposition \ref{prop:conv_chars} is the following.

\begin{cor}
If $\chi_k\rightarrow\chi\in\wh{N}$, then $|\chi|$ divides $|\chi_k|$ eventually.
\end{cor}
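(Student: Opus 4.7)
The plan is to deduce this corollary directly from Proposition \ref{prop:conv_chars} together with the orbit-stabilizer relation and the multiplicativity of subgroup indices. Recall that $|\chi| = |\mc{O}_\chi| = [G:G_\chi]$ and similarly $|\chi_k| = [G:G_{\chi_k}]$, since $G$ acts on $\wh{N}$ with $\chi$ having stabilizer $G_\chi$.

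First I would invoke Proposition \ref{prop:conv_chars} to pass to a subsequence (or equivalently, to a tail of the sequence) on which $G_{\chi_k} \leq G_\chi$ holds for every $k$. This is exactly the content of that proposition once one notices that the property "$|\chi|$ divides $|\chi_k|$" is a property about tails of the sequence, so "eventually" is preserved. Next, since $G_{\chi_k} \leq G_\chi \leq G$ is a tower of subgroups of finite index (the indices are at most $|D|$), I apply the standard index multiplicativity
\[
[G:G_{\chi_k}] \;=\; [G:G_\chi] \cdot [G_\chi : G_{\chi_k}],
\]
which immediately gives $[G:G_\chi] \mid [G:G_{\chi_k}]$, i.e., $|\chi|$ divides $|\chi_k|$.

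Since no step requires more than routine index arithmetic and the key input is Proposition \ref{prop:conv_chars}, there is no real obstacle here; the only care needed is to remember that the conclusion is an eventual statement, matching the "eventually" in Proposition \ref{prop:conv_chars}. One could also observe that the total passage to a subsequence is harmless for concluding "eventually" in the original sequence, because Proposition \ref{prop:conv_chars} is already phrased for the original sequence (it uses finiteness of the collection of subgroups of $G$ containing $N$, so one can argue directly on the full sequence without reindexing).
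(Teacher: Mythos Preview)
Your proof is correct and follows exactly the approach the paper intends: the paper simply labels this corollary ``a quick application of Proposition \ref{prop:conv_chars}'' without spelling out the details, and your argument via orbit--stabilizer and index multiplicativity is precisely that quick application.
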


Suppose we have $\pi\in\Irrn(G)$. Let $P_1:\hn\rightarrow\hn$ be the projection given by $P_1\xi=(\xi_1,0,...,0)$ where $\xi=(\xi_1,\xi_2,...,\xi_n)\in\hn$ and let $e_1=(1,0,...,0)\in\hn$. Define the {\em $\chi$ associated to the concrete representation $\pi$} by the formula
\[\chi(h):=P_1\pi(h)e_1\te{ for all }h\in N.\]

We note that $\chi$ is indeed a character by Remark \ref{rmk:restriction_induced_reps}.

\begin{prop}\label{prop:recovering_chi}
Suppose $\pi_k\rightarrow \pi$ in $\Repn(G)$ where $\{\pi_k\}\sse\Irrn(G)$. Let $\chi_k$, $\chi$ be the characters associated to the concrete representations $\pi_k$, $\pi$ respectively. Then $\chi_k\rightarrow\chi$ in $\wh{N}$.
\end{prop}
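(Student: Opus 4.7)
The plan is to unpack the definitions of both the convergence $\pi_k\to\pi$ in $\Repn(G)$ and the target topology on $\wh{N}$, and observe that they line up so tightly that the claim is almost immediate.

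First I would recall that by Proposition \ref{prop:facts_list}, convergence $\pi_k\to\pi$ in $\Repn(G)\sse\usf(\hn)$ is equivalent to the strong pointwise statement
\[
\normspace{\hn}{\pi_k(s)\xi-\pi(s)\xi}\longrightarrow 0\quad\text{for every }s\in G,\xi\in\hn.
\]
Specializing to $s=h\in N$ and $\xi=e_1$, and then applying the continuous linear map $P_1$, I obtain
\[
\|P_1\pi_k(h)e_1-P_1\pi(h)e_1\|_{\hn}\longrightarrow 0,
\]
which by the very definition of the associated characters is $\chi_k(h)\to\chi(h)$ for every $h\in N$.

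Next I would convert this pointwise convergence on $N$ into convergence in the Pontryagin dual $\wh{N}\cong\tn^r$. Since $N\cong\zn^r$ is finitely generated, fix generators $n_1,\dots,n_r$; the canonical identification $\wh{N}\cong\tn^r$ sends a character $\psi$ to $(\psi(n_1),\dots,\psi(n_r))$, and the topology on $\wh{N}$ is precisely the one making this map a homeomorphism. Applying the previous step to $h=n_i$ for $1\leq i\leq r$ gives coordinatewise convergence in $\tn^r$, hence $\chi_k\to\chi$ in $\wh{N}$.

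The only point that requires any care at all is justifying that $\chi_k$ and $\chi$ really are characters, so that the conclusion lives in $\wh{N}$ in the first place; this is already flagged by the author as a consequence of Remark \ref{rmk:restriction_induced_reps}, which forces $\pi|_N$ to decompose as a direct sum of characters from a single $G$-orbit, so the formula $h\mapsto P_1\pi(h)e_1$ (extracting the $(1,1)$-entry in the diagonalizing basis produced by the GAP procedure) indeed defines a homomorphism $N\to\tn$. There is no genuine obstacle: once this is noted and the equivalence from Proposition \ref{prop:facts_list} is invoked, the proof is a two-line unwinding of definitions.
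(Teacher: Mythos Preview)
Your argument is correct and matches the paper's almost verbatim: both invoke Proposition~\ref{prop:facts_list} to get strong pointwise convergence, specialize to $h\in N$ and $\xi=e_1$, apply $P_1$, and read off $\chi_k(h)\to\chi(h)$. The only minor divergence is in the side point about why $\chi$ is a character: the paper handles this by applying Lemma~\ref{lemma:limit_representations_is_representation} to the $1$-dimensional sequence $\chi_k\to\chi$, whereas you cite Remark~\ref{rmk:restriction_induced_reps}---but that remark, as the paper uses it, applies to the irreducible $\pi_k$, not to the possibly reducible limit $\pi$ (in particular your claim that $\pi|_N$ sits over a \emph{single} $G$-orbit need not hold). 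This does not affect the argument, since the conclusion that the $(1,1)$-entry of $\pi|_N$ is a character still follows.
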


\begin{proof}
    Let $\{\pi_k\}\sse\Irrn(G)$ converge to some possibly reducible $\pi\in\Repn(G)$. Lemma \ref{lemma:limit_representations_is_representation} applied to $\chi_k\rightarrow \chi$ shows that $\chi\in\wh{N}$.
    
    Let $h\in N$. Proposition \ref{prop:facts_list} implies that
    \begin{align*}
    |\chi_k(h)-\chi(h)|&=\|P_1\pi_k(h)e_1-P_1\pi(h)e_1\|_{\hn}\\
    &=\|P_1(\pi_k(h)e_1-\pi(h)e_1)\|_{\hn}\\
    &\leq \|P_1\|_{\hn}\|\pi_k(h)e_1-\pi(h)e_1\|_{\hn}\\
    &\leq \|\pi_k(h)e_1-\pi(h)e_1\|_{\hn} \longrightarrow 0.
    \end{align*}

\end{proof}

\begin{defn}
Let $B(\chi,\ve)$ be an $\ve$-neighborhood of $\chi$ in $\wh{N}\cong\tn^r$. Let
\[\wh{N}_k:=\{\chi\in\wh{N}:|\chi|=k\}\]
denote the set of {\em $k$-orbits}. We say that $\chi\in\wh{N}$ is a {\em $k$-boundary point} if, for all $\ve>0$, there exists $\chi'\in\wh{N}_k$ such that $\chi'\in B(\chi,\ve)$. $\chi$ is a {\em $k$-interior point} if there exists $\ve>0$ such that $B(\chi,\ve)\sse\wh{N}_k$. We say $\chi$ is a {\em $k$-orbit drop point} if $\chi$ is a $k$-boundary point and $|\chi|<k$.
\end{defn}

We observe that Proposition \ref{prop:conv_chars} implies the set ${}_k\wh{N}=\bigcup_{j=1}^k\wh{N}_j$ is closed in $\wh{N}$; equivalently, $\wh{N}\setminus {}_k\wh{N}$ is open in $\wh{N}$.

\begin{cor}\label{cor:orbit_drop_points}
If $\{\pi_k\}\sse\Irrn(G)$ and $\pi_k\rightarrow\pi$ in $\Repn(G)$ where $\pi$ is reducible, then $\chi$ is an orbit drop point.
\end{cor}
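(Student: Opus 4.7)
The approach is by contrapositive: assume $\chi$ is not an orbit drop point and derive that $\pi$ must be irreducible. By Proposition \ref{prop:recovering_chi}, $\chi_k\to\chi$ in $\wh{N}$. Because $G$ contains only finitely many subgroups over $N$, I pass to a subsequence so that the stabilizer $G_{\chi_k}=G_*$ is constant; writing $m=[G:G_*]=|\chi_k|$, this alone makes $\chi$ an $m$-boundary point. Proposition \ref{prop:conv_chars} gives $G_*\leq G_\chi$, so $|\chi|\leq m$. If $\chi$ is not an $m$-orbit drop point, then $|\chi|=m$ and $G_\chi=G_*$; I will contradict reducibility of $\pi$ in this case.

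Invoke the Mackey machine to write $\pi_k=\on{ind}_{G_*}^G\sigma_k$ with $\sigma_k\in\wh{G_*}^{(\chi_k)}$ of dimension $s=n/m$. The concrete matrix of an induced representation has entries built from entries of $\sigma_k$ by a fixed continuous procedure on a chosen system of coset representatives; therefore the convergence $\pi_k\to\pi$ in $\on{Rep}_n(G)$ descends to $\sigma_k\to\sigma$ in $\on{Rep}_s(G_*)$ with $\sigma|_N=\chi^{\oplus s}$ and $\pi=\on{ind}_{G_*}^G\sigma$. Since $G_\chi=G_*$, the Mackey machine produces an irreducible $\pi$ as soon as $\sigma$ is irreducible, so the task reduces to proving $\sigma$ is irreducible.

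Transfer to projective representations. By Proposition \ref{prop:projective_lift}, $\sigma_k=\chi_k^*\cdot(\pi_k^P\circ q)$ for a unique irreducible $\pi_k^P\in\wh{D_*}^{\omega_{\chi_k}}$, and likewise $\sigma=\chi^*\cdot(\pi^P\circ q)$ for some $\pi^P\in\on{R}^{\omega_\chi}(D_*)$. Since $\chi_k^*\to\chi^*$ and $\sigma_k\to\sigma$, dividing yields $\pi_k^P\to\pi^P$ pointwise on $D_*$. Corollary \ref{cor:cohomologous_sequence} supplies, after a subsequence, a common cohomology class for all the $\omega_{\chi_k}$ and $\omega_\chi$; the construction of Section \ref{subsec:constructing_convergent_sequences} then, after further subsequential reductions using the finiteness of $\wh{D_*}^{\omega_0}$ and the compactness of $\T^{|D_*|}$, writes $\pi_k^P=\Phi^P\cdot\lambda_k$ for a fixed irreducible $\Phi^P\in\wh{D_*}^{\omega_0}$ and normalized functions $\lambda_k\to\lambda$. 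Consequently $\pi^P=\Phi^P\cdot\lambda$, and because pointwise scalar multiplication preserves invariant subspaces, $\pi^P$ is irreducible. Applying Proposition \ref{prop:projective_lift} in the reverse direction, $\sigma$ is irreducible, contradicting the reducibility of $\pi$.

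The main technical obstacle is bookkeeping: chaining the subsequential reductions (constant $G_{\chi_k}$, common cohomology class, common $\Phi^P$, pointwise convergence of $\lambda_k$) while verifying that the projective limit $\Phi^P\cdot\lambda$ truly coincides, under the Mackey--projective correspondence, with the $\pi^P$ obtained by dividing the entrywise limit $\sigma$ by $\chi^*$. Each individual reduction is routine, but the full chain must respect continuity at every layer of the correspondence.
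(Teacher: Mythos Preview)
Your proof is correct and follows essentially the same contrapositive route as the paper: recover $\chi_k\to\chi$ via Proposition~\ref{prop:recovering_chi}, use Proposition~\ref{prop:conv_chars} to force $G_{\chi_k}=G_\chi$ eventually when $\chi$ is not an orbit drop point, and then appeal to the projective-representation machinery of Section~\ref{subsec:constructing_convergent_sequences} (what the paper cites as ``equation~(\ref{eq:sequence2})'') to conclude the limit is irreducible. You have simply unpacked what the paper compresses into a single sentence; the one step you should tighten is the descent ``$\pi_k\to\pi$ on $G$ implies $\sigma_k\to\sigma$ on $G_*$,'' since the given $\pi_k$ are only \emph{equivalent} to induced representations, not literally in block form---this is cleanly fixed by taking $\sigma_k$ to be $\pi_k|_{G_*}$ on the $\chi_k$-isotypic subspace rather than reading it off a block decomposition.
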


\begin{proof}
    Suppose $\chi$ is not an orbit drop point. Proposition \ref{prop:recovering_chi} implies $\chi_k\rightarrow \chi$ and so $G_{\chi_k}\leq G_{\chi}$ eventually by Proposition \ref{prop:conv_chars}. Since $\chi$ is not an orbit drop point, we must have $G_{\chi_k}=G_{\chi}$ eventually. The sequence described in equation (\ref{eq:sequence2}) combined with Proposition \ref{prop:induction_facts} (\ref{prop:induction_direct_sums}) yields that $\pi$ must be irreducible.
\end{proof}

\begin{rmk}\label{rmk:orbit_drop_meaning}
Let $O_k$ be the set of $k$-orbit drop points (which we observe may be empty). Then it is exactly over set $\bigcup_{k=1}^{|D|}O_k$ where we would expect the unitary dual to exhibit non-Hausdorff behavior per Remark \ref{rmk:hausdorff_justification}. In particular, we observe that any sequence of irreducible representations built over $k$-interior points must {\em always} converge to an irreducible representation.
\end{rmk}

\begin{defn}\label{defn:k_path_connected}
We say $\chi\in\wh{N}_k$ is {\em $k$-path connected} to $\chi'\in\wh{N}_k$ if there exists a continuous function $h:[0,1]\rightarrow\wh{N}_k$ such that $h(0)=\chi$ and $h(1)=\chi'$ and we write $\chi\approx \chi'$.
\end{defn}

\begin{prop}\label{prop:path_stabilizer_subgroup}
If $h:[0,1]\rightarrow \wh{N}_k$ is a continuous function, then $G_{h(t)}=G_{h(s)}$ for all $s,t\in [0,1]$. In particular, if $\chi\approx \chi'$, then $G_{\chi}=G_{\chi'}$.
\end{prop}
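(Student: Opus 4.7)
The plan is to show that the assignment $\chi \mapsto G_\chi$ is locally constant on $\wh{N}_k$, after which connectedness of $[0,1]$ forces $G_{h(t)}$ to be independent of $t$. First I would observe that for every $\chi \in \wh{N}_k$ we have $[G:G_\chi] = |\mc{O}_\chi| = k$, so the stabilizer $G_\chi$ necessarily belongs to the finite collection
\[\mc{F} := \{H \leq G : N \leq H,\ [G:H] = k\}.\]
This gives a finite disjoint partition $\wh{N}_k = \bigsqcup_{H \in \mc{F}} V_H$, where $V_H := \{\chi \in \wh{N}_k : G_\chi = H\}$.

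The key step is to prove each $V_H$ is open in $\wh{N}_k$. Since $\wh{N} \cong \tn^r$ is metrizable, so is the subspace $\wh{N}_k$, and it suffices to verify sequential openness. Fix $\chi \in V_H$ and suppose $\chi_n \to \chi$ with $\chi_n \in \wh{N}_k$. Proposition \ref{prop:conv_chars} yields $G_{\chi_n} \leq G_\chi = H$ eventually. But $\chi_n \in \wh{N}_k$ forces $[G:G_{\chi_n}] = k = [G:H]$, so the inclusion of finite-index subgroups is in fact an equality: $G_{\chi_n} = H$ for large $n$, that is, $\chi_n \in V_H$ eventually. Because the partition $\{V_H\}_{H \in \mc{F}}$ is finite, each $V_H$ is also closed, hence clopen in $\wh{N}_k$.

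With $\wh{N}_k$ decomposed into finitely many clopen pieces, continuity of $h \colon [0,1] \to \wh{N}_k$ and connectedness of $[0,1]$ imply that $h([0,1])$ is contained in a single $V_H$; therefore $G_{h(t)} = H$ for every $t$, proving the main assertion. The second statement follows immediately by applying the first to the path realizing $\chi \approx \chi'$. The only substantive point is the openness of $V_H$, and this is where the restriction to $\wh{N}_k$ is essential: Proposition \ref{prop:conv_chars} on its own only gives the inclusion $G_{\chi_n} \leq G_\chi$ (and permits the stabilizer to grow strictly at the limit), so without the index constraint the sets $V_H$ would fail to be open—precisely the phenomenon encoded by orbit drop points in Corollary \ref{cor:orbit_drop_points}.
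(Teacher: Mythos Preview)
Your proof is correct and shares the paper's essential ingredients: the finiteness of possible stabilizers (subgroups of $D$), the containment $G_{\chi_n}\leq G_\chi$ from Proposition~\ref{prop:conv_chars}, and the observation that within $\wh{N}_k$ the equal-index constraint upgrades this containment to equality. Where you diverge is in the packaging. The paper works on the compact connected image $h([0,1])$, argues that each point has a neighborhood in the path with the same stabilizer class, extracts a finite cover by compactness, and then bootstraps through overlapping balls to conclude all classes coincide. You instead establish directly that the sets $V_H=\{\chi\in\wh{N}_k:G_\chi=H\}$ form a finite clopen partition of the whole space $\wh{N}_k$, and then let connectedness of $[0,1]$ do the work in one stroke. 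Your route is cleaner and slightly more general (it immediately yields that the stabilizer is constant on every connected subset of $\wh{N}_k$, not just path images), and it sidesteps the compactness argument entirely; the paper's version is more hands-on but arrives at the same place via the same key inequality-plus-index trick.
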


\begin{proof}
    Let $h:[0,1]\rightarrow\wh{N}_k$ be a continuous function. Because there are only a finite number of $k$-index subgroups of $G$, we can partition the path $H=h([0,1])\sse\wh{N}_k$ into a finite number of equivalence classes $[[\rho]]$ where $\rho'\in[[\rho]]$ if and only if $G_{\rho}=G_{\rho'}$. Enumerate these equivalence classes $\{[[\rho_1]],[[\rho_2]],...,[[\rho_{m}]]\}$.
    
    Observe that Proposition \ref{prop:conv_chars} and the continuity of $h$ implies that if $\sigma\in H$ and if, for all $\ve>0$, there exists $\sigma_{\ve}\in B(\sigma,\ve)\cap H$ such that $\sigma_{\ve}\in[[\rho_j]]$ for some $1\leq j\leq m$, then $\sigma\in [[\rho_j]]$ as well. Because there are only a finite number of equivalence classes, we can always arrange for any infinite sequence of $\{\sigma_{\ve}\}$ to be arising from exactly one equivalence class by passing to a subsequence.
    
    Let $\sigma\in H$. Then, of course, $\sigma\in[[\rho_j]]$ for some $1\leq j\leq m$. The previous comment shows the set $\{\ve>0:B(\sigma,\ve)\cap H\sse[[\sigma]]=[[\rho_j]]\}$ is non-empty. We observe that $H$ is compact so there exists a finite collection $\{B(\sigma_{\ell},\ve_{\sigma_{\ell}})\}_{\ell=1}^{L}$ such that $H\sse \bigcup_{\ell=1}^LB(\sigma_{\ell},\ve_{\sigma_{\ell}})$. Because $H$ is connected, every $\rho\in H$ must be covered by at least two neighborhoods $B(\sigma_{\ell},\ve_{\sigma_{\ell}})$ and $B(\sigma_{j},\ve_{\sigma_{j}})$ for $\ell\neq j$. The discussion above would then imply that $[[\sigma_{\ell}]]=[[\sigma_j]]$. A bootstrapping argument then gives that $\sigma_{\ell}\in[[\sigma_1]]$ for all $1\leq \ell \leq L$.

    If $\chi\approx \chi'$, then there exists a continuous function $h:[0,1]\rightarrow\wh{N}_k$ such that $h(0)=\chi$ and $h(1)=\chi'$. Hence, $G_{\chi}=G_{\chi'}$ by the above discussion.
\end{proof}

\begin{rmk}
The converse of this proposition is false in general. See, for example, the 1-orbits described in Section \ref{example:one_orbits}.   
\end{rmk}

\begin{cor}\label{cor:k_path_connected_implies_cohomologous}
If $\chi\approx \chi'$, then $\chi\sim \chi'$ over their shared stabilizer modulo $N$.
\end{cor}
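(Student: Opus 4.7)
The plan is to reduce the statement to the fact that a continuous function from a connected space to a finite discrete space is constant. Let $h\colon[0,1]\to\wh{N}_k$ be a $k$-path witnessing $\chi\approx\chi'$, and let $D_*\leq D$ be the common stabilizer modulo $N$ of every character on the path, as provided by Proposition \ref{prop:path_stabilizer_subgroup}. Then for every $t\in[0,1]$ the cocycle $\omega_{h(t)}=\ol{h(t)}\circ\nu$ lives in $\msc{C}^2(D_*,\T)$, so we obtain a well-defined map
\[
\phi\colon[0,1]\longrightarrow\msc{H}^2(D_*,\T),\qquad \phi(t)=[\omega_{h(t)}].
\]
By Proposition \ref{prop:equalized_finitized_omega}, the codomain $\msc{H}^2(D_*,\T)$ is finite, and we endow it with the discrete topology.

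The main step is to show that $\phi$ is continuous. Since $[0,1]$ is first countable, it suffices to prove sequential continuity. Fix $t\in[0,1]$ and a sequence $t_n\to t$. By continuity of $h$ we have $h(t_n)\to h(t)$ in $\wh{N}_k$, and all of these characters lie in $D_*$ stabilizer. Corollary \ref{cor:cohomologous_sequence} then applies directly: $h(t_n)$ is eventually cohomologous to $h(t)$ over $\msc{C}^2(D_*,\T)$, i.e., $\phi(t_n)=\phi(t)$ for all sufficiently large $n$. This gives sequential continuity, hence continuity, of $\phi$.

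Since $[0,1]$ is connected and $\msc{H}^2(D_*,\T)$ is a finite discrete (hence Hausdorff) space, $\phi$ must be constant. In particular $\phi(0)=\phi(1)$, which is precisely the statement $[\omega_\chi]=[\omega_{\chi'}]$ in $\msc{H}^2(D_*,\T)$, i.e., $\chi\sim\chi'$ over their shared stabilizer modulo $N$.

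The only potentially delicate point is confirming that Corollary \ref{cor:cohomologous_sequence} applies to an arbitrary convergent sequence with fixed stabilizer (rather than only to subsequences), but this is guaranteed here because $D_*$ is \emph{already} the stabilizer of every $h(t_n)$ along the path, so no further subsequential reduction on the stabilizer data is needed before invoking the corollary.
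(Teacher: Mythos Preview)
Your proof is correct and follows essentially the same approach as the paper: both arguments combine the finiteness of $\msc{H}^2(D_*,\T)$ with Corollary~\ref{cor:cohomologous_sequence} and the connectedness of the path. Your packaging via ``a continuous map from a connected space to a finite discrete space is constant'' is a clean reformulation of the paper's instruction to rerun the open-cover/bootstrapping argument of Proposition~\ref{prop:path_stabilizer_subgroup} with cohomology classes in place of stabilizer subgroups.
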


\begin{proof}
    In the proof of Proposition \ref{prop:path_stabilizer_subgroup}, replace Proposition \ref{prop:conv_chars} by Corollary \ref{cor:cohomologous_sequence} and replace the finite number of $k$-index subgroups of $G$ with the finite number of elements in $\msc{H}^2(D_{\chi},\tn)$.

\end{proof}

\subsection{Main Topological Results}\label{subsec:main_top_results}
We begin with a useful lemma already known to experts. The proof is based on comments made on p.8 of \cite{CST22}.

\begin{lemma}\label{lemma:inducing_preserves_convergence}
Suppose $\sigma_k\rightarrow \sigma\in \Repn(H)$. Let $H\leq K$ and $\mc{H}_{H}^K$ be the induced Hilbert space. Then $\textnormal{ind}_{H}^K\sigma_k\rightarrow \textnormal{ind}_{H}^K\sigma$ in $\usf(\mc{H}_{H}^K)$.
\end{lemma}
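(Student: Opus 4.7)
The plan is to reduce the convergence statement on $\mc{H}_H^K$ to the componentwise convergence on copies of $\hn$ via the canonical isometric isomorphism $\mc{H}_H^K \cong \bigoplus_{xH \in K/H} \mc{H}_H$ noted just after Definition \ref{defn:induced_representation}. Concretely, I will fix a system of coset representatives $\{y_i\}_{i \in I}$ for $K/H$ (with $y_1 = e$, say) and identify each $\xi \in \mc{H}_H^K$ with the tuple $(\xi(y_i))_{i \in I} \in \bigoplus_i \hn$; the equivariance condition $\xi(xh) = \sigma(h^{-1})\xi(x)$ makes this assignment well-defined and isometric.

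Next I would unravel the action of the induced representation in this picture. For $x \in K$ and each coset representative $y_i$, there is a unique decomposition $x^{-1} y_i = y_{\pi_x(i)} h_i(x)$ with $\pi_x$ a permutation of the index set $I$ and $h_i(x) \in H$; crucially, both $\pi_x$ and $h_i(x)$ depend only on $x$ and the fixed coset representatives, \emph{not} on the representation $\sigma$. Then
\[
(\mathrm{ind}_H^K \sigma(x)\,\xi)(y_i) \;=\; \xi(x^{-1} y_i) \;=\; \xi(y_{\pi_x(i)} h_i(x)) \;=\; \sigma(h_i(x)^{-1})\,\xi(y_{\pi_x(i)}).
\]
This is the only place where $\sigma$ enters, and the same formula with $\sigma_k$ in place of $\sigma$ describes $\mathrm{ind}_H^K \sigma_k(x)$.

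Putting these together, for fixed $x \in K$ and $\xi \in \mc{H}_H^K$,
\[
\bigl\|\mathrm{ind}_H^K \sigma_k(x)\xi - \mathrm{ind}_H^K \sigma(x)\xi\bigr\|_{\mc{H}_H^K}^2 \;=\; \sum_{i \in I} \bigl\|\sigma_k(h_i(x)^{-1})\,\xi(y_{\pi_x(i)}) - \sigma(h_i(x)^{-1})\,\xi(y_{\pi_x(i)})\bigr\|_{\hn}^2.
\]
By hypothesis $\sigma_k \to \sigma$ in $\Repn(H)$, so by Proposition \ref{prop:facts_list} each summand tends to $0$ as $k \to \infty$. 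Since $\sigma_k$ and $\sigma$ are unitary on $\hn$, each summand is majorized by $4\|\xi(y_{\pi_x(i)})\|_{\hn}^2$, and $\sum_{i} \|\xi(y_{\pi_x(i)})\|_{\hn}^2 = \|\xi\|_{\mc{H}_H^K}^2 < \infty$. Dominated convergence on the counting measure of $I$ then gives that the full sum tends to $0$, which is exactly strong pointwise convergence $\mathrm{ind}_H^K \sigma_k \to \mathrm{ind}_H^K \sigma$ in $\usf(\mc{H}_H^K)$.

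The only slightly delicate point is step two, the $\sigma$-independent identification of the permutation $\pi_x$ and the elements $h_i(x)$; once that bookkeeping is done cleanly, the rest is a direct application of Proposition \ref{prop:facts_list} together with a dominated-convergence argument (which is needed only in case $[K:H] = \infty$; in the crystallographic setting this index is finite and the sum is simply finite, making the limit immediate).
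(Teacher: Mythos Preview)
Your argument is correct and follows essentially the same route as the paper: both identify $\mc{H}_H^K$ with $\bigoplus_{xH}\mc{H}_H$ via a fixed transversal, observe that the induced operator acts by a $\sigma$-independent permutation of the blocks composed with $\sigma$ (or $\sigma_k$) applied to elements of $H$, and then read off convergence from $\sigma_k\to\sigma$. The only difference is cosmetic---the paper writes out the block matrices explicitly with a finite transversal $\{t_1,\dots,t_d\}$, whereas you phrase it via the action on vectors and add a dominated-convergence step to cover the (unneeded here) case $[K:H]=\infty$.
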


\begin{proof}
     
    We view each $\sigma_k,\sigma$ as matrices on the shared Hilbert space $\mc{H}_H$ of dimension $n$ such that $\sigma_k\rightarrow\sigma$ in $\usf(\mc{H}_H)$. Let $\mc{H}_H^K$ be the induced Hilbert space and consider $\pi_k=\textnormal{ind}_H^K\sigma_k$ and $\pi=\textnormal{ind}_H^K\sigma$. Note that $\mc{H}_H^K$ is isometrically isomorphic to $\bigoplus_{x\in K\slash H}\mc{H}_H$. Let $\mc{T}=\{t_1,t_2,..,t_d\}$ be a transversal of $K\slash H$. Then, when we represent $\pi_k$, $\pi$ on $\bigoplus_{x\in K\slash H}\mc{H}_H$, we produce the following block matrices: for $g\in K$, 
    \[\pi_k(g)\simeq\l[\begin{array}{cccc}
    \sigma_k(t_1^{-1}gt_1) & \sigma_k(t_1^{-1}gt_2) & \cdots & \sigma_k(t_1^{-1}gt_d) \\
    \sigma_k(t_2^{-1}gt_1)  & \sigma_k(t_2^{-1}gt_2)  & \cdots & \sigma_k(t_2^{-1}gt_d) \\
    \vdots & \vdots & \ddots & \vdots\\
    \sigma_k(t_d^{-1}gt_1) & \sigma_k(t_d^{-1}gt_2) & \cdots & \sigma_k(t_d^{-1}gt_d) 
    \end{array}\r]\]
    and
    \[\pi(g)\simeq\l[\begin{array}{cccc}
    \sigma(t_1^{-1}gt_1) & \sigma(t_1^{-1}gt_2) & \cdots & \sigma(t_1^{-1}gt_d) \\
    \sigma(t_2^{-1}gt_1)  & \sigma(t_2^{-1}gt_2)  & \cdots & \sigma(t_2^{-1}gt_d) \\
    \vdots & \vdots & \ddots & \vdots\\
    \sigma(t_d^{-1}gt_1) & \sigma(t_d^{-1}gt_2) & \cdots & \sigma(t_d^{-1}gt_d) 
    \end{array}\r]\]
    where we set $\sigma_k(t_j^{-1}gt_{\ell})$, $\sigma(t_j^{-1}gt_{\ell}):=0$ when $t_j^{-1}gt_{\ell}\not\in H$.
    
    Since $\sigma_k\rightarrow\sigma$ in $\usf(\mc{H}_H)$, the isometric isomorphism between $\mc{H}_H^K$ and $\bigoplus_{x\in K\slash H}\mc{H}_H$ implies we must have $\pi_k\rightarrow \pi$ in $\usf(\mc{H}_H^K)$.

\end{proof}

\begin{rmk}
The following main theorems pay homage to results of Baggett and Raeburn (\cite{Bag68},\cite{Rae82}) but we formulate them specifically for crystallographic groups and concrete sequences of irreducibles.
\end{rmk}

Let $\chi_k\rightarrow\chi\in\wh{N}$ where we assume the $\{\chi_k\}$ have a shared stabilizer, denoted $G_*$. By the discussion surrounding equation (\ref{eq:sequence2}) in Section \ref{subsec:constructing_convergent_sequences}, we can build a convergent sequence of irreducible representations of $\wh{G}_*^{(\chi_k)}$ in $\hn$, say $\sigma_k$, which converges to some irreducible representation in $\wh{G}_*^{(\chi)}$, say $\sigma$. This, of course, does not mean that $\sigma\in\wh{G}_{\chi}^{(\chi)}$. Note that this discussion also shows that if we have a sequence of irreducible representations with this behavior, it must arise from this construction. We call such a sequence the {\em generic sequence of irreducible representations over $\chi_k\rightarrow\chi$}.

This leads us to one of our main results.

\begin{thm}\label{thm:main_result}
    Let $G$ be a crystallographic group with lattice $N$ and point group $D$. 

    Let $\sigma_k\rightarrow \sigma$ be a generic sequence of irreducible representations over $\chi_k\rightarrow\chi$. Then the sequence $\{[\textnormal{ind}_{G_*}^G\,\sigma_k]\}\sse \wh{G}$ converges to every element of the set $\{[\textnormal{ind}_{G_{\chi}}^G\,\rho_j]\}\sse\wh{G}$ for some (nonempty) subset $\{[\rho_j]\}\sse \wh{G}_{\chi}^{(\chi)}$. Moreover, if $[\textnormal{ind}_{G_*}^G\,\sigma_k]\rightarrow[\pi]\in \wh{G}$, then $\pi\simeq \on{ind}_{G_{\chi}}^G\rho$ for some $[\rho]\in\wh{G}_{\chi}^{(\chi)}$.
\end{thm}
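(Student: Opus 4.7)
The plan is to reduce both claims to the already-proven $C^*$-algebraic results (Proposition \ref{prop:sequence_to_subrepresentations} and Theorem \ref{theorem:sequence_to_subrepresentations}) by producing the correct decomposition of the limit of the induced sequence. First, I would lift the convergence $\sigma_k\to\sigma$ from $G_*$ to $G$ using Lemma \ref{lemma:inducing_preserves_convergence}, concluding that $\textnormal{ind}_{G_*}^G\sigma_k\to\textnormal{ind}_{G_*}^G\sigma$ in $\usf(\mc{H}_{G_*}^G)$. Note that each $\textnormal{ind}_{G_*}^G\sigma_k=\textnormal{ind}_{G_{\chi_k}}^G\sigma_k$ is irreducible by the Mackey Machine (Theorem \ref{thm:Mackey_machine}), so all members of the sequence live in $\Irr_n(G)$, where $n=[G:G_*]\cdot\dim\sigma$.

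Next, I would decompose the target $\textnormal{ind}_{G_*}^G\sigma$ into irreducibles living in the Mackey picture attached to $G_\chi$. Since $G_*\leq G_\chi\leq G$, induction in stages (Proposition \ref{prop:induction_facts}(\ref{prop:induction_in_stages})) gives
\[
\textnormal{ind}_{G_*}^G\sigma\;\simeq\;\textnormal{ind}_{G_\chi}^G\bigl(\textnormal{ind}_{G_*}^{G_\chi}\sigma\bigr).
\]
Because $\sigma\in\wh{G}_*^{(\chi)}$ means $\sigma|_N=\chi^{\oplus\dim\sigma}$ and $G_*\leq G_\chi$, Remark \ref{rmk:restriction_induced_reps}(2) ensures $(\textnormal{ind}_{G_*}^{G_\chi}\sigma)|_N$ is again a direct sum of copies of $\chi$. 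Decomposing $\textnormal{ind}_{G_*}^{G_\chi}\sigma\simeq\bigoplus_j\rho_j$ into $G_\chi$-irreducibles, each summand therefore satisfies $\rho_j|_N=\chi^{\oplus m_j}$, so $[\rho_j]\in\wh{G}_\chi^{(\chi)}$. Applying Proposition \ref{prop:induction_facts}(\ref{prop:induction_direct_sums}) and the Mackey Machine again yields
\[
\textnormal{ind}_{G_*}^G\sigma\;\simeq\;\bigoplus_j\textnormal{ind}_{G_\chi}^G\rho_j,
\]
with each $\textnormal{ind}_{G_\chi}^G\rho_j$ irreducible. A quick dimension check ($\sum_j[G:G_\chi]\dim\rho_j=[G:G_*]\dim\sigma=n$) verifies that the decomposition puts each $[\textnormal{ind}_{G_\chi}^G\rho_j]$ into ${}_n\wh{G}$, as required to feed into the abstract propositions.

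With this decomposition in hand, the first assertion is immediate from Proposition \ref{prop:sequence_to_subrepresentations} applied to $A=C^*(G)$: the concrete convergence $\textnormal{ind}_{G_*}^G\sigma_k\to\textnormal{ind}_{G_*}^G\sigma$ in $\Rep_n(G)$ translates (via Proposition \ref{prop:facts_list}) into convergence in $\Rep_n(C^*(G))$, and this forces $[\textnormal{ind}_{G_*}^G\sigma_k]\to[\textnormal{ind}_{G_\chi}^G\rho_j]$ in $\wh{G}$ for every $j$. For the converse ``moreover'' statement, I would invoke Corollary \ref{cor:crystallography_group_liminal} to know that $C^*(G)$ is liminal, then apply Theorem \ref{theorem:sequence_to_subrepresentations} to the same sequence and decomposition; this tells us any limit point $[\pi]\in\wh{G}$ of $[\textnormal{ind}_{G_*}^G\sigma_k]$ must be unitarily equivalent to one of the $\textnormal{ind}_{G_\chi}^G\rho_j$, giving the desired $\rho\in\wh{G}_\chi^{(\chi)}$.

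The main technical obstacle is the bookkeeping in the middle paragraph: confirming that the irreducible constituents of $\textnormal{ind}_{G_*}^{G_\chi}\sigma$ truly lie in $\wh{G}_\chi^{(\chi)}$ (so that their induced images are $G$-irreducible and so the Mackey Machine parameterization is respected), and that the resulting dimensions align so that the $C^*$-algebraic limit theorems apply at the right fixed dimension $n$. Once this alignment is secured, both claims reduce cleanly to the two $C^*$-algebraic propositions already established in Section \ref{subsec:spectrum_liminal_Cstar}.
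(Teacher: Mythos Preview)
Your proposal is correct and follows essentially the same route as the paper: induce the convergent sequence from $G_*$ up to $G$ via Lemma \ref{lemma:inducing_preserves_convergence}, use induction in stages together with Remark \ref{rmk:restriction_induced_reps} to decompose $\textnormal{ind}_{G_*}^G\sigma$ as a direct sum of $\textnormal{ind}_{G_\chi}^G\rho_j$ with $[\rho_j]\in\wh{G}_\chi^{(\chi)}$, and then invoke Proposition \ref{prop:sequence_to_subrepresentations} and Theorem \ref{theorem:sequence_to_subrepresentations} (the latter requiring liminality from Corollary \ref{cor:crystallography_group_liminal}). The only cosmetic difference is that the paper first applies Lemma \ref{lemma:inducing_preserves_convergence} at the intermediate stage $G_*\leq G_\chi$ before passing to $G$, whereas you induce directly to $G$ and decompose afterward; both orders yield the same argument.
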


\begin{proof}
    Represent each $\sigma_k,\sigma$ on a shared Hilbert space $\mc{H}_{G_*}$. On the induced Hilbert space, $\mc{H}^{G_{\chi}}_{G_*}$, concretely consider each $\textnormal{ind}_{G_*}^{G_{\chi}}\,\sigma_k$. Lemma \ref{lemma:inducing_preserves_convergence} implies this sequence will converge to $\textnormal{ind}_{G_*}^{G_{\chi}}\,\sigma$ in $\usf(\mc{H}^{G_{\chi}}_{G_*})$. Remark \ref{rmk:restriction_induced_reps} indicates that $(\textnormal{ind}_{G_*}^{G_{\chi}}\sigma)|_N=\chi^{\oplus m}$ where $m=[G_{\chi}:G_*]\cdot \dim\sigma$ and so $\textnormal{ind}_{G_*}^{G_{\chi}}\,\sigma$ is a (possibly reducible) representation of $G_{\chi}$ which restricts to $\chi$. Thus, it is the direct sum of elements of $\wh{G}_{\chi}^{(\chi)}$. Let $\textnormal{ind}_{G_*}^{G_{\chi}}\,\sigma\simeq \bigoplus_{j=1}^{\ell}\rho_j^{\oplus m_j}$ where $[\rho_j]\in\wh{G}_{\chi}^{(\chi)}$ (chosen to be pairwise inequivalent) and $m_j\in\zn_{>0}$. Then, by Proposition \ref{prop:induction_facts} (\ref{prop:induction_direct_sums}, \ref{prop:induction_in_stages}),
     \[      \textnormal{ind}_{G_{\chi}}^G\,\textnormal{ind}_{G_*}^{G_{\chi}}\,\sigma\simeq\textnormal{ind}_{G_{\chi}}^G\l(\bigoplus_{j=1}^{\ell}\rho_j^{\oplus m_j}\r)    \simeq\bigoplus_{j=1}^{\ell}\l(\textnormal{ind}_{G_{\chi}}^G\,\rho_j\r)^{\oplus m_j}.
    \]
    Because $[\rho_j]\in \wh{G}_{\chi}^{(\chi)}$, $[\textnormal{ind}_{G_{\chi}}^G\,\rho_j]\in \wh{G}$ by the Mackey Machine (Theorem \ref{thm:Mackey_machine}). 

    By Proposition \ref{prop:induction_facts} (\ref{prop:induction_in_stages}), $\textnormal{ind}_{G_{\chi}}^G\textnormal{ind}_{G_*}^{G_{\chi}}\,\sigma\simeq\textnormal{ind}_{G_{*}}^G\,\sigma.$ Lemma \ref{lemma:inducing_preserves_convergence} implies $\textnormal{ind}_{G_*}^G\,\sigma_k\rightarrow \textnormal{ind}_{G_*}^G\,\sigma$, where the latter is unitarily equivalent to $\bigoplus_{j=1}^{\ell}\l(\textnormal{ind}_{G_{\chi}}^G\,\rho_j\r)^{\oplus m_j}$. The result follows by Proposition \ref{prop:sequence_to_subrepresentations} and Theorem \ref{theorem:sequence_to_subrepresentations}.

\end{proof}

Theorem \ref{thm:main_result} indicates {\em where} we should start looking for the irreducible subrepresentations of the limit of a sequence of irreducible representations. However, not every limit is simply the direct sum of {\em all} the irreducible representations living over the associated character (see, for instance, Example \ref{ex:4_orbit_to_1_orbit}). Thus, there remains the question of how to determine which irreducibles are actually subrepresentations of the limit.

To answer this question, we appeal to the character theory of projective representations. Amazingly, the character theory of projective representations directly mimics character theory of finite groups \cite[Cor~7.15 (p.199)]{CST22}. If $\sigma^P,\rho^P$ are $\eta$-representations of a finite group $H$, we define
\[\langle \sigma^P,\rho^P\rangle_H=\dfrac{1}{|H|}\sum_{h\in H}\Tr(\sigma^P(h))\,\ov{\Tr(\rho^P(h))}.\]
In particular, if $\sigma^P\simeq\bigoplus_{j=1}^k(\rho_j^P)^{\oplus m_j}$, then \label{prop:character_theory}
\[\langle \sigma^P,\rho^P_j\rangle_H = m_j.\]
We caution that not only must $\sigma^P,\rho^P$ be projective representations of the same finite group, but they are both arising from the same 2-cocycle $\eta$.

Our goal is to extend this result to ordinary representations of crystallographic groups. 

Recall from Theorem \ref{thm:projective_to_regular_bijection} that there is a dimension preserving bijection from $D_{\chi}^{\omega_{\chi}}$ to $\wh{G}_{\chi}^{(\chi)}$ given by $\pi^P\mapsto\sigma$ where
\[\sigma(x)=\chi^*(x)\cdot\pi^P(q(x)).\]
Hence, we see that for all $x\in G_{\chi}$, 
\[\Tr(\sigma(x))=\Tr(\chi^*(x)\cdot \pi^P(x'))=\chi^*(x)\cdot\Tr(\pi^P(x'))\te{ where }x'=q(x).\]
We note that $\chi^*:G_{\chi}\rightarrow\tn$ and so $\chi^*\cdot\ov{\chi^*}=1$. We also note that Proposition \ref{prop:projective_lift} ensures that the isotypic components of $\sigma$ are preserved under this bijection. 
 Thus, if $[\rho_j]\in\wh{G}_{\chi}^{(\chi)}$ and $\sigma\simeq\bigoplus_{j=1}^k\rho_j^{\oplus m_j}$, then we may define
\[\langle \sigma,\rho_j\rangle_{G_{\chi}}:=\dfrac{1}{|D_{\chi}|}\sum_{h\in \mc{T}_{\chi}}\Tr(\sigma(h))\,\ov{\Tr(\rho_j(h))}=m_j\]
where $\mc{T}_{\chi}$ is a transversal of $G_{\chi}\slash N$. 

This discussion combined with the proof of Theorem \ref{thm:main_result} gives the following result. 

\begin{thm}\label{thm:character_theory}
Let $G$ be a crystallographic group with lattice $N$ and point group $D$. 

Suppose $\sigma_k\rightarrow\sigma$ is a generic sequence of irreducible representations over $\chi_k\rightarrow\chi$. Let $\{[\rho_j]\}_{j=1}^{\ell}$ be inequivalent irreducible concrete representations of the elements of $\wh{G}_{\chi}^{(\chi)}$ (where $|\wh{G}_{\chi}^{(\chi)}|=\ell$), each with associated character $\chi$. Then, if $\langle \sigma,\rho_j\rangle_{G_{\chi}}=m_j$, 
\[\textnormal{ind}_{G_{\chi}}^{G}\sigma\simeq\bigoplus_{j=1}^{\ell}(\textnormal{ind}_{G_{\chi}}^G\rho_j)^{\oplus m_j}.\]

\end{thm}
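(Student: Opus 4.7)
The plan is to reduce the entire computation to the finite-dimensional projective character theory of $D_\chi$ that the proof of Theorem \ref{thm:main_result} has already put in place. Since $\sigma\in\wh{G_*}^{(\chi)}$, the argument used in Theorem \ref{thm:main_result} shows that $\textnormal{ind}_{G_*}^{G_\chi}\sigma$ is a representation of $G_\chi$ whose restriction to $N$ is $\chi^{\oplus m}$ for $m=[G_\chi:G_*]\cdot\dim\sigma$. By Proposition \ref{prop:projective_lift}, this induced representation lies in $\on{R}_\chi(G_\chi)$ and is therefore a direct sum of elements of $\wh{G}_\chi^{(\chi)}$; under the interpretation that ``$\sigma$'' in the statement means $\textnormal{ind}_{G_*}^{G_\chi}\sigma$ (which is consistent with its appearance both in the inner product and as an induction from $G_\chi$), we may write
\[\textnormal{ind}_{G_*}^{G_\chi}\sigma \simeq \bigoplus_{j=1}^{\ell}\rho_j^{\oplus n_j}\]
for some a priori unknown multiplicities $n_j\in\zn_{\geq 0}$, and the task becomes to identify $n_j$ with $\langle\sigma,\rho_j\rangle_{G_\chi}$.

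I would next translate this decomposition to the projective side via Proposition \ref{prop:projective_lift}. Because the bijection $\wh{D_\chi}^{\omega_\chi}\xrightarrow{\sim}\wh{G}_\chi^{(\chi)}$ preserves direct sums, if $\pi^P$ and $\rho_j^P$ are the $\omega_\chi$-representations of $D_\chi$ corresponding to $\textnormal{ind}_{G_*}^{G_\chi}\sigma$ and $\rho_j$, then $\pi^P\simeq\bigoplus_{j=1}^{\ell}(\rho_j^P)^{\oplus n_j}$. Invoking character theory for projective representations \cite[Cor~7.15]{CST22},
\[n_j \;=\; \langle\pi^P,\rho_j^P\rangle_{D_\chi} \;=\; \frac{1}{|D_\chi|}\sum_{d\in D_\chi}\Tr(\pi^P(d))\,\overline{\Tr(\rho_j^P(d))}.\]

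The main step, and the place where one must be careful, is the translation of this projective pairing on $D_\chi$ into the ordinary pairing on $G_\chi$ displayed in the statement. Fix a transversal $\mc{T}_\chi$ of $G_\chi/N$, so that $q$ bijects $\mc{T}_\chi$ with $D_\chi$. By Theorem \ref{thm:projective_to_regular_bijection}, each $\rho_j$ satisfies $\rho_j(x)=\chi^*(x)\rho_j^P(q(x))$ and similarly for $\textnormal{ind}_{G_*}^{G_\chi}\sigma$; since $\chi^*:G_\chi\to\tn$ has modulus one, the scalars $\chi^*(h)$ and $\overline{\chi^*(h)}$ cancel termwise inside the trace product. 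Consequently,
\[\langle\sigma,\rho_j\rangle_{G_\chi} \;=\; \frac{1}{|D_\chi|}\sum_{h\in\mc{T}_\chi}\Tr(\pi^P(q(h)))\,\overline{\Tr(\rho_j^P(q(h)))} \;=\; n_j,\]
which identifies the $n_j$ with the $m_j$ of the hypothesis.

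Finally, I would close by applying Proposition \ref{prop:induction_facts} (\ref{prop:induction_direct_sums}) and (\ref{prop:induction_in_stages}) to induce from $G_\chi$ up to $G$:
\[\textnormal{ind}_{G_*}^{G}\sigma \;\simeq\; \textnormal{ind}_{G_\chi}^{G}\l(\bigoplus_{j=1}^{\ell}\rho_j^{\oplus m_j}\r) \;\simeq\; \bigoplus_{j=1}^{\ell}\l(\textnormal{ind}_{G_\chi}^{G}\rho_j\r)^{\oplus m_j},\]
yielding the desired decomposition. The hard part is really just the bookkeeping in the character translation: one must verify that the twist $\chi^*$ does not obstruct the passage between the two inner products, and that the projective representations $\pi^P$ and $\rho_j^P$ genuinely share the cocycle $\omega_\chi$ so that \cite[Cor~7.15]{CST22} applies. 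Both of these are guaranteed by the setup because $\textnormal{ind}_{G_*}^{G_\chi}\sigma$ and every $\rho_j$ lie in $\on{R}_\chi(G_\chi)$ and thus correspond to $\omega_\chi$-representations under Proposition \ref{prop:projective_lift}.
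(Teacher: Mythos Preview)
Your proposal is correct and follows essentially the same approach as the paper: the paper's ``proof'' is just the sentence ``This discussion combined with the proof of Theorem~\ref{thm:main_result} gives the following result,'' where the preceding discussion establishes exactly the trace translation $\Tr(\sigma(x))=\chi^*(x)\Tr(\pi^P(q(x)))$, the cancellation $\chi^*\cdot\overline{\chi^*}=1$, and the appeal to projective character theory via Proposition~\ref{prop:projective_lift} and \cite[Cor~7.15]{CST22}. You have in fact made explicit a notational wrinkle the paper glosses over---that the ``$\sigma$'' appearing in $\langle\sigma,\rho_j\rangle_{G_\chi}$ and in $\textnormal{ind}_{G_\chi}^G\sigma$ must be read as $\textnormal{ind}_{G_*}^{G_\chi}\sigma$ (as confirmed by Step~9 of the Strategy in Section~\ref{subsec:unitary_dual_via_sequences})---and your handling of it is correct.
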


\subsection{Investigating the Topology via Sequences}\label{subsec:unitary_dual_via_sequences}

Based on Sections \ref{subsec:the_program}, \ref{subsec:constructing_convergent_sequences} and Theorems \ref{thm:main_result}, \ref{thm:character_theory}, we propose the following strategy for investigating the topology of the unitary dual of a crystallographic group.

\tbf{The Strategy:}
\begin{enumerate}
    \item Choose an $k$-orbit drop $\chi\in\wh{N}$ and $\{\chi_n\}\sse\wh{N}_k$ a sequence with shared stabilizer $G_{*}$ such that $\{\chi_n\}\rightarrow\chi\in\wh{N}$ (see Remark \ref{rmk:orbit_drop_meaning} for rationale). Proposition \ref{prop:conv_chars} implies $G_{*}\leq G_{\chi}$ and Corollary \ref{cor:orbit_drop_points} gives $G_{*}\neq G_{\chi}$.
   
    \item Produce cocycles $\omega_n\colon D_*\times D_*\to\T$ and $\omega\colon D_{\chi}\times D_{\chi}\to\T$, given by $\omega_n:=\ol{\chi_n}\circ\nu$ and $\omega:=\ol{\chi}\circ\nu$. Observe that $\omega_n\to\omega$ over $D_*$. Corollaries \ref{cor:cohomologous_sequence} and \ref{cor:k_path_connected_implies_cohomologous} gives that $\{\omega_n\}$, $\omega$ share a cohomology class over $D_*$.
  
    \item For each $n\geq1$, define $\lambda_n\colon D_*\to\T$ such that $\lambda_n(1)=1$ and $\omega_n=\omega_0\cdot\tau_{\lambda_n}$.
    
    \item Reduce to a subsequence $\chi_n\to\chi$, where $\lambda_n\to\lambda$ (which we can do because $\tn$ is compact and $D_*$ is finite). Over $D_*$, we have $\omega=\omega_0\cdot\tau_{\lambda}$ since $\omega_n\to\omega$.
    
    \item Fix a Hilbert space $\mc{H}$ such that we have a sequence of representations $\sigma_n^P\in\wh{D_*}^{\omega_n}$ on $\mc{H}$ converging to some $\omega$-representation $\sigma^P$ on $\mc{H}$.
    
    \item Reduce to a subsequence $\chi_n\to\chi$ where there exists $\Phi^P\in\wh{D_*}^{\omega_0}$ such that $\sigma_0^P=\Phi^P$ and $\sigma_n^P=\Phi^P\cdot\lambda_n$, for $n>0$. It must be that $\sigma^P=\Phi^P\cdot\lambda$, since $\sigma_n^P\to\sigma^P$ on $\mc{H}$. This implies that $\sigma^P\in\wh{D_*}^{\omega}$, emphasis on irreducible.
    
    \item Lift the sequence of projective representations up to domain $G_*$ so that we have $\chi_0^*\cdot\Phi^P,\ \chi_1^*\cdot\Phi^P\cdot\lambda_1,\ \chi_2^*\cdot\Phi^P\cdot\lambda_2,\dots\to\chi^*\cdot\Phi^P\cdot\lambda$, suppressing $\Phi^P$ and $\lambda_n$ to mean $\Phi^P\circ q$ and $\lambda_n\circ q$, respectively. They are all ordinary representations over $G_*$, and each of the $\chi_n^*\cdot\Phi^P\cdot\lambda_n$ and $\chi^*\cdot\Phi^P\cdot\lambda$ are irreducible; hence, they are elements of $\wh{G_*}^{(\chi_n)}$ and $\wh{G_*}^{(\chi)}$, respectively.
    
    \item Induce from $G_*$ to $G_{\chi}$ to produce the convergent sequence: $$\on{ind}_{G_*}^{G_{\chi}}(\chi_0^*\cdot\Phi^P),\ \on{ind}_{G_*}^{G_{\chi}}(\chi_1^*\cdot\Phi^P\cdot\lambda_1),\ \on{ind}_{G_*}^{G_{\chi}}(\chi_2^*\cdot\Phi^P\cdot\lambda_2),\dots\to\on{ind}_{G_*}^{G_{\chi}}(\chi^*\cdot\Phi^P\cdot\lambda)\te{ in }\usf(\mc{H}_{G_*}^{G_{\chi}}).$$ The ordinary representation $\pi:=\on{ind}_{G_*}^{G_{\chi}}(\chi^*\cdot\Phi^P\cdot\lambda)$ is reducible because $G_*\lneq G_{\chi}$. 
    
    \item Use Theorem \ref{thm:character_theory} to write
    \[\pi=\on{ind}_{G_*}^{G_{\chi}}(\chi^*\cdot\Phi^P\cdot\lambda)\simeq\bigoplus_{j=1}^{\ell}\rho_j^{\oplus m_j}\]
    for $[\rho_j]\in\wh{G}_{\chi}^{(\chi)}$ where $m_j=\langle \pi,\rho_j\rangle_{\chi}$.
    
    \item Let $\pi_n:=\on{ind}_{G_{\chi}}^G\on{ind}_{G_*}^{G_{\chi}}(\chi_n^*\cdot\Phi^P\cdot \lambda_n)$ where we note all convergent sequences of irreducible representations arise in this way according to the discussion surrounding equation (\ref{eq:sequence2}) in Section \ref{subsec:constructing_convergent_sequences}. Apply Theorem \ref{thm:main_result} to conclude $\pi_n\rightarrow \on{ind}_{G_{\chi}}^G\rho_j^{\oplus m_j}$ whenever $m_j\neq 0$ and everything to which $\{\pi_n\}$ converges is of this form.
\end{enumerate}


\section{Investigating the Spectrum of Dimension 3 Crystallographic Group 90}\label{sec:group_90}

\subsection{Dimension 3 Crystallographic Group 90}\label{subsec:group_90}
In this section, we include proof of concept examples demonstrating the application of the code and the conclusions of Theorems \ref{thm:main_result} and \ref{thm:character_theory}. We investigate the unitary dual of the dimension 3 crystallographic group 90. Note that the numbering for dimension 3 crystallographic groups (frequently referred to as ``space groups") is standardized and is used in fields such as chemistry, physics, and crystallography. We begin our examination with following data as provided by GAP's (\cite{GAP4}) group library and the GAP package CrystCat (\cite{CrystCat}):
\[G=\langle a,b,c\,|\, (b^{-1}a)^2=a^4=b^{-1}cbc=c^{-1}a^{-1}ca=(a^{-2}b^{-2})^2\rangle\]
\[\zn^3\cong N=\langle ab^2a^{-1},b^{-2},c\rangle\]
\[D=\{e,a,a^2,a^3,b,ab,a^2b,a^3b\}\]
\[1\rightarrow\zn^3\rightarrow G\rightarrow D\rightarrow 1\]
We list and label the lattice of subgroups of $D$.
\[\begin{tikzcd}[column sep=small]
	&& {D} \\
	& {D_1=\{e,a^2,b,a^2b\}} & {D_2=\{e,a,a^2,a^3\}} & {D_3=\{e,a^2,ab,a^3b\}} \\
	{D_4=\{e,a^2b\}} & {D_5=\{e,b\}} & {D_6=\{e,a^2\}} & {D_7=\{e,ab\}} & {D_8=\{e,a^3b\}} \\
	&& {\{e\}}
	\arrow[no head, from=1-3, to=2-2]
	\arrow[no head, from=1-3, to=2-3]
	\arrow[no head, from=1-3, to=2-4]
	\arrow[no head, from=3-1, to=2-2]
	\arrow[no head, from=3-2, to=2-2]
	\arrow[no head, from=3-3, to=2-2]
	\arrow[no head, from=3-3, to=2-3]
	\arrow[no head, from=3-3, to=2-4]
	\arrow[no head, from=3-4, to=2-4]
	\arrow[no head, from=3-5, to=2-4]
	\arrow[no head, from=4-3, to=3-1]
	\arrow[no head, from=4-3, to=3-2]
	\arrow[no head, from=4-3, to=3-3]
	\arrow[no head, from=4-3, to=3-4]
	\arrow[no head, from=4-3, to=3-5]
\end{tikzcd}\]

Since $N\cong\zn^3$, $\wh{N}\cong \tn^3$. So, for any $\chi\in\wh{N}$, we may write $\chi=(u_1,u_2,u_3)$ for $u_1,u_2,u_3\in\tn$. For each $\chi\in\wh{N}$, let
\[\chi(ab^2a^{-1})=u_1,\quad\chi(b^{-2})=u_2,\quad\chi(c)=u_3.\]
Then $D\curvearrowright \wh{N}$ according to the following chart.
\[\begin{array}{c|c|c|c}
d & d\cdot u_1 & d\cdot u_2 & d\cdot u_3\\\hline
e & u_1 & u_2 & u_3\\
a & \ov{u_2} & u_1 & u_3\\
a^2 & \ov{u_1} & \ov{u_2} & u_3\\
a^3 & u_2 & \ov{u_1} & u_3\\
b & \ov{u_1} & u_2 & \ov{u_3}\\
ab & \ov{u_2} & \ov{u_1} & \ov{u_3}\\
a^2b & u_1 & \ov{u_2} & \ov{u_3}\\
a^3b & u_2 & u_1 & \ov{u_3}
\end{array}\]

Below we list the orbits and, within each orbit size, we have subdivided the characters into what we are terming {\em orbit types}. An orbit type simply means there is a single formula easily rendering all characters within that family. This is done for computational convenience.

\tbf{1-Orbits}: \label{example:one_orbits}
\[\begin{array}{c||c|c}
&\te{Type 1} & \te{Type 2} \\\hline
\te{Formula}&(1,1,\pm 1)&(-1,-1,\pm1)\\
\te{Stabilizer mod } N & D & D
\end{array}\]

\tbf{2-Orbits}: 
\[\begin{array}{c||c|c|c}
&\te{Type 1} & \te{Type 2} & \te{Type 3} \\\hline
\te{Formula}&(1,-1,\pm 1)&
(-1,1,\pm1)&(\ve,\ve,u_3)\\
\te{Stabilizer mod } N & D_1 & D_1 & D_2
\end{array}\]
where $\ve\in\{-1,1\}$ and $u_3\not\in\{-1,1\}$.

\tbf{4-Orbits}: 
\[\begin{array}{c||c|c|c|c|c}
&\te{Type 1} & \te{Type 2} & \te{Type 3} & \te{Type 4}&\te{Type 5}\\\hline
\te{Formula} &(u_1,u_1,\pm 1)&(u_1, \ov{u_1},\pm 1)&(u_1,\pm 1, \pm 1)&(\pm 1, u_2,\pm 1) &(\ve,-\ve,u_3)\\
\te{Stabilizer mod }N & D_8 & D_7 & D_4& D_5 & D_6
\end{array}\]
where $\ve\in\{-1,1\}$ and $u_1,u_2,u_3\not\in \{-1,1\}$.

\tbf{8-Orbits}: Any $\chi=(u_1,u_2,u_3)$ not meeting criteria of a previous category.

 We used the code to generate irreducible representations living over characters $\chi=(u_1,u_2,u_3)$ where $u_1,u_2,u_3\in\tn$ are specific, concrete values living in a particular orbit types. This means that we chose characters of the form $\chi=(i,i,-1)$ as opposed to $(v,v,-1)$ for $v$ any non-real element in $\tn$ as, currently, the code cannot manage indeterminate values for $u_j$. To determine the irreducible representations living over a particular orbit type, we manually determined formulas based on our numerical $\chi$. We then checked that the resulting irreducible representation was, in fact, a representation of $G$ and exhibited the appropriate behavior over $N$. Removing human intervention from the process is the focus of a future project. 
 
 The tables of irreducible representations arising from each orbit type are listed in Appendix \ref{appendix:start} where the notation $\pi^{(u_1,u_2,u_3)}_j$ represents the $j^{\te{th}}$ irreducible representation associated to $\chi=(u_1,u_2,u_3)$. Be aware that different presentations of $G$ and $N$ will yield different presentations of these irreducible representations (though they will be unitarily equivalent). GAP's method for determining presentations depends on a variety of methods and therefore results will vary from program to program. We also note that Corollary \ref{cor:k_path_connected_implies_cohomologous} means that our choice of orbit type ensure all characters in that orbit are cohomologous over their shared stabilizers. This is of particular importance as the dimension of $\pi_j^{(u_1,u_2,u_3)}$ is at least partially dependent on cohomology class.

\renewcommand\arraystretch{1}

We will use the notation $(u_1,u_2,u_3)\rightarrow (u_1',u_2',u_3')$ to represent a sequence of the same orbit type converging to a different orbit type. Additionally, let 
\[\stackrel[(u_1',u_2',u_3')]{}{L}(\pi_j^{(u_1,u_2,u_3)}):=\dlim{(u_1,u_2,u_3)}{(u_1',u_2',u_3')}\pi_j^{(u_1,u_2,u_3)}\]
be the matrix representation of $\pi_j^{(u_1,u_2,u_3)}$ with entrywise convergence as $(u_1,u_2,u_3)\rightarrow (u_1',u_2',u_3')$. We use $\stackrel[(u_1',u_2',u_3')]{}{BDL}(\pi_j^{(u_1,u_2,u_3)})$ to represent the block diagonalization of $\stackrel[(u_1',u_2',u_3')]{}{L}(\pi_j^{(u_1,u_2,u_3)})$ into irreducibles by some unitary $U_j$; that is,
\[U_j\l(\stackrel[(u_1',u_2',u_3')]{}{L}(\pi_j^{(u_1,u_2,u_3)})\r)U_j^{-1}=\stackrel[(u_1',u_2',u_3')]{}{BDL}(\pi_j^{(u_1,u_2,u_3)}).\]

To determine $\stackrel[(u_1',u_2',u_3')]{}{BDL}(\pi_j^{(u_1,u_2,u_3)})$, we used the character theory developed in Theorem \ref{thm:character_theory}. In the following examples, we show the resulting matrix of the irreducible representations over $(u_1,u_2,u_3)$ converging entrywise to a reducible representation and we include the direct sum to which this resulting matrix is equivalent and, finally, the unitary which effects this equivalence.

\begin{example}[8-orbit to 4-orbit (Type 4)] $(u_1,u_2,u_3)\rightarrow (-1,u_2,1)$ 
\[\begin{array}{c|c|c}
\stackrel[(-1,u_2,1)]{}{L}(\pi^{(u_1,u_2,u_3)}(a)) & \stackrel[(-1,u_2,1)]{}{L}(\pi^{(u_1,u_2,u_3)}(b)) & \stackrel[(-1,u_2,1)]{}{L}(\pi^{(u_1,u_2,u_3)}(c)) \\\hline\hline
&&\\
\l[\begin{array}{ccccccccc}
0 & 0 & 1 & 0 & 0 & 0 & 0 & 0\\
1 & 0 & 0 & 0 & 0 & 0 & 0 & 0\\
0 & 0 & 0 & 0 & 1 & 0 & 0 & 0\\
0 & 0 & 0 & 0 & 0 & 0 & 1 & 0\\
0 & 1 & 0 & 0 & 0 & 0 & 0 & 0\\
0 & 0 & 0 & 1 & 0 & 0 & 0 & 0\\
0 & 0 & 0 & 0 & 0 & 0 & 0 & 1\\
0 & 0 & 0 & 0 & 0 & 1 & 0 & 0
\end{array}\r]& \l[\begin{array}{cccccccc}
0 & 0 & 0 & \ov{u_2} & 0 & 0 & 0 &0\\
0 & 0 & 0 & 0 & 0 & 0 & 1 & 0\\
0 & 0 & 0 & 0 & 0 & -\ov{u_2}& 0 & 0\\
1 & 0 & 0 & 0 & 0 & 0 & 0 & 0 \\
0 & 0 & 0 & 0 & 0 & 0 & 0 & -1\\
0 & 0 & u_2 & 0 & 0 & 0 & 0 & 0\\
0 & -1 & 0 & 0 & 0 & 0 & 0 &0\\
0 & 0 & 0 & 0 & -u_2 & 0 & 0 & 0
\end{array}\r] & \l[\begin{array}{cccccccc}
1 & \\
 & 1 & \\
 & & 1 & \\
 & & & 1 & \\
 & & & & 1 & \\
 & & & & & 1\\
 & & & & & & 1\\
 & & & & & & & 1
\end{array}\r]
\end{array}\]
\[\stackrel[(-1,u_2,1)]{}{BDL}(\pi^{(u_1,u_2,u_3)})=\pi_1^{(-1,u_2,1)}\oplus \pi_2^{(-1,u_2,1)},\quad U=\dfrac{1}{\sqrt{2}}\l[\begin{array}{cccccccc}
1 & 0 & 0 & -\ov{u_2}^{1/2} & 0 & 0 & 0 & 0\\
0 & 0 & 1 & 0 & 0 & 0 & -\ov{u_2}^{1/2} & 0\\
0 & 1 & 0 & 0 & 0 & -\ov{u_2}^{1/2} & 0 & 0\\
0 & 0 & 0 & 0 & 1 & 0 & 0 & -\ov{u_2}^{1/2}\\
1 & 0 & 0 & -\ov{u_2}^{1/2} & 0 & 0 & 0 & 0\\
0 & 0 & 1 & 0 & 0 & 0 & \ov{u_2}^{1/2} & 0\\
0 & 1 & 0 & 0 & 0 & \ov{u_2}^{1/2} & 0 & 0\\
0 & 0 & 0 & 0 & 1 & 0 & 0 & \ov{u_2}^{1/2}
\end{array}\r]\]
\mbox{}
\end{example}

\begin{example}[8-orbit to 2-orbit (Type 3)] $(u_1,u_2,u_3)\rightarrow (1,1,u_3)$
\[\begin{array}{c|c|c}
\stackrel[(1,1,u_3)]{}{L}(\pi^{(u_1,u_2,u_3)}(a)) & \stackrel[(1,1,u_3)]{}{L}(\pi^{(u_1,u_2,u_3)}(b)) & \stackrel[(1,1,u_3)]{}{L}(\pi^{(u_1,u_2,u_3)}(c)) \\\hline\hline
&&\\
\l[\begin{array}{ccccccccc}
0 & 0 & 1 & 0 & 0 & 0 & 0 & 0\\
1 & 0 & 0 & 0 & 0 & 0 & 0 & 0\\
0 & 0 & 0 & 0 & 1 & 0 & 0 & 0\\
0 & 0 & 0 & 0 & 0 & 0 & 1 & 0\\
0 & 1 & 0 & 0 & 0 & 0 & 0 & 0\\
0 & 0 & 0 & 1 & 0 & 0 & 0 & 0\\
0 & 0 & 0 & 0 & 0 & 0 & 0 & 1\\
0 & 0 & 0 & 0 & 0 & 1 & 0 & 0
\end{array}\r]& \l[\begin{array}{cccccccc}
0 & 0 & 0 & 1 & 0 & 0 & 0 &0\\
0 & 0 & 0 & 0 & 0 & 0 & 1 & 0\\
0 & 0 & 0 & 0 & 0 & 1& 0 & 0\\
1 & 0 & 0 & 0 & 0 & 0 & 0 & 0 \\
0 & 0 & 0 & 0 & 0 & 0 & 0 & 1\\
0 & 0 & 1 & 0 & 0 & 0 & 0 & 0\\
0 & 1 & 0 & 0 & 0 & 0 & 0 &0\\
0 & 0 & 0 & 0 & 1 & 0 & 0 & 0
\end{array}\r] & \l[\begin{array}{cccccccc}
u_3 & \\
 & u_3 & \\
 & & u_3 & \\
 & & & \ov{u_3} & \\
 & & & & u_3 & \\
 & & & & & \ov{u_3}\\
 & & & & & & \ov{u_3}\\
 & & & & & & & \ov{u_3}
\end{array}\r]
\end{array}\]

\[\stackrel[(1,1,u_3)]{}{BDL}(\pi^{(u_1,u_2,u_3)})=\pi_1^{(1,1,u_3)}\oplus\pi_2^{(1,1,u_3)}\oplus \pi_3^{(1,1,u_3)}\oplus \pi_4^{(1,1,u_3)},\quad U=\dfrac{1}{2}\l[\begin{array}{cccccccc}
-1 & i & -i & 0 & 1 & 0 & 0 & 0\\
0 & 0 & 0 & -1 & 0 & -i & i & 1\\
-1 & -i & i & 0 & 1 & 0 & 0 & 0\\
0 & 0 & 0 & -1 & 0 & i & -i & 1 \\
1 & -1 & -1 & 0 & 1 & 0 & 0 & 0\\
0 & 0 & 0 & 1 & 0 & -1 & -1 & 1\\
1 & 1 & 1 & 0 & 1 & 0 & 0 & 0 \\
0 & 0 & 0 & 1 & 0 & 1 & 1 & 1
\end{array}\r]\]
\mbox{}

\end{example}

\begin{example}[8-orbit to 1-orbit (Type 1)]$(u_1,u_2,u_3)\rightarrow (1,1,1)$
\[\begin{array}{c|c|c}
\stackrel[(1,1,1)]{}{L}(\pi^{(u_1,u_2,u_3)}(a)) & \stackrel[(1,1,1)]{}{L}(\pi^{(u_1,u_2,u_3)}(b)) & \stackrel[(1,1,1)]{}{L}(\pi^{(u_1,u_2,u_3)}(c)) \\\hline\hline
&&\\
\l[\begin{array}{ccccccccc}
0 & 0 & 1 & 0 & 0 & 0 & 0 & 0\\
1 & 0 & 0 & 0 & 0 & 0 & 0 & 0\\
0 & 0 & 0 & 0 & 1 & 0 & 0 & 0\\
0 & 0 & 0 & 0 & 0 & 0 & 1 & 0\\
0 & 1 & 0 & 0 & 0 & 0 & 0 & 0\\
0 & 0 & 0 & 1 & 0 & 0 & 0 & 0\\
0 & 0 & 0 & 0 & 0 & 0 & 0 & 1\\
0 & 0 & 0 & 0 & 0 & 1 & 0 & 0
\end{array}\r]& \l[\begin{array}{cccccccc}
0 & 0 & 0 & 1 & 0 & 0 & 0 &0\\
0 & 0 & 0 & 0 & 0 & 0 & 1 & 0\\
0 & 0 & 0 & 0 & 0 & 1& 0 & 0\\
1 & 0 & 0 & 0 & 0 & 0 & 0 & 0 \\
0 & 0 & 0 & 0 & 0 & 0 & 0 & 1\\
0 & 0 & 1 & 0 & 0 & 0 & 0 & 0\\
0 & 1 & 0 & 0 & 0 & 0 & 0 &0\\
0 & 0 & 0 & 0 & 1 & 0 & 0 & 0
\end{array}\r] & \l[\begin{array}{cccccccc}
1 & \\
 & 1 & \\
 & & 1 & \\
 & & & 1 & \\
 & & & & 1 & \\
 & & & & & 1\\
 & & & & & & 1\\
 & & & & & & & 1
\end{array}\r]
\end{array}\]
\[\stackrel[(1,1,1)]{}{BDL}(\pi^{(u_1,u_2,u_3)})=\pi_1^{(1,1,1)}\oplus \pi_2^{(1,1,1)}\oplus \pi_3^{(1,1,1)}\oplus \pi_4^{(1,1,1)}\oplus \pi_5^{(1,1,1)}\oplus \pi_5^{(1,1,1)}\]
\[U=\dfrac{1}{2\sqrt{2}}\l[\begin{array}{rrrrrrrr}
1 & -1 & -1 & -1 & 1 & 1 & 1 & -1\\
-1 & 1 & 1 & -1 & -1 & 1 & 1 & -1 \\
1 & 1 & 1 & -1 & 1 & -1 & -1 & -1 \\
1 & 1 & 1 & 1 & 1 & 1 & 1 & 1\\
1 & 1 & -1 & 1 & -1 & -1 & 1 & -1\\
-1 & 1 & -1 & 1 & 1 & 1 & -1 & -1\\
1 & -1 & 1 & 1 & -1 & 1 & -1 & -1\\
1 & 1 & -1 & -1 & -1  & 1 & -1 & 1
\end{array}\r]\]
\end{example}

\renewcommand\arraystretch{2}

\begin{example}[4-orbit (Type 3) to 2-orbit (Type 1)] $(u_1,-1,1)\rightarrow(1,-1,1)$
\[\begin{array}{c|c|c|c}
& a & b & c\\\hline\hline
&&&\\[-.2in]
\stackrel[(1,-1,1)]{}{L}(\pi_1^{(u_1,-1,1)}) & {\renewcommand\arraystretch{1}  \l[\begin{array}{cccc}
0 & 1 & 0 & 0\\
0 & 0 & 0 & -1\\
1 & 0 & 0 & 0\\
0 & 0 & -1& 0
\end{array}\r]} & {\renewcommand\arraystretch{1} \l[\begin{array}{cccc}
0 & 0 & 0 & 1\\
0 & -1 & 0 & 0\\
0 & 0 & 1 & 0\\
-1 & 0 & 0 & 0
\end{array}\r]}&{\renewcommand\arraystretch{1} \l[\begin{array}{cccc}
1 & 0 & 0 & 0\\
0 & 1 & 0 & 0\\
0 & 0 & 1 & 0\\
0 & 0 & 0 & 1
\end{array}\r]}\\
&&&\\[-.25in]
\stackrel[(1,-1,1)]{}{L}(\pi_2^{(u_1,-1,1)})&{\renewcommand\arraystretch{1} \l[\begin{array}{cccc}
0 & 1 & 0 & 0\\
0 & 0 & 0 & 1\\
1 & 0 & 0 & 0\\
0 & 0 & 1& 0
\end{array}\r]} & {\renewcommand\arraystretch{1} \l[\begin{array}{cccc}
0 & 0 & 0 & 1\\
0 & 1 & 0 & 0\\
0 & 0 & -1 & 0\\
-1 & 0 & 0 & 0
\end{array}\r] }&{\renewcommand\arraystretch{1} \l[\begin{array}{cccc}
1 & 0 & 0 & 0\\
0 & 1 & 0 & 0\\
0 & 0 & 1 & 0\\
0 & 0 & 0 & 1
\end{array}\r]}
\end{array}\]
\[\stackrel[(1,-1,1)]{}{BDL}(\pi_1^{(u_1,-1,1)})=\pi^{(1,-1,1)},\quad U_1=\dfrac{1}{\sqrt{2}}{\renewcommand\arraystretch{1}\l[\begin{array}{cccc}
1 & 0 & 0 & 1\\
-i & 0 & 0 & i\\
0 & -1 & 1 & 0\\
0 & -i & -i & 0
\end{array}\r]}\]
\[\stackrel[(1,-1,1)]{}{BDL}(\pi_2^{(u_1,-1,1)})=\pi^{(1,-1,1)},\quad U_2=\dfrac{1}{\sqrt{2}}{\renewcommand\arraystretch{1}\l[\begin{array}{cccc}
1 & 0 & 0 & -1\\
i & 0 & 0 & i\\
0 & -1 & 1 & 0\\
0 & i & i & 0
\end{array}\r]}\]

\end{example}

\begin{example}[4-orbit (Type 3) to 1-orbit (Type 2)]\label{ex:4_orbit_to_1_orbit} $(u_1,-1,1)\rightarrow(-1,-1,1)$
\[\begin{array}{c|c|c|c}
& a & b & c\\\hline\hline
&&&\\[-.2in]
\stackrel[(-1,-1,1)]{}{L}(\pi_1^{(u_1,-1,1)}) & {\renewcommand\arraystretch{1}  \l[\begin{array}{cccc}
0 & 1 & 0 & 0\\
0 & 0 & 0 & -i\\
1 & 0 & 0 & 0\\
0 & 0 &i& 0
\end{array}\r]} & {\renewcommand\arraystretch{1} \l[\begin{array}{cccc}
0 & 0 & 0 & 1\\
0 & -i & 0 & 0\\
0 & 0 & -i & 0\\
-1 & 0 & 0 & 0
\end{array}\r]}&{\renewcommand\arraystretch{1} \l[\begin{array}{cccc}
1 & 0 & 0 & 0\\
0 & 1 & 0 & 0\\
0 & 0 & 1 & 0\\
0 & 0 & 0 & 1
\end{array}\r]}\\
&&&\\[-.25in]
\stackrel[(-1,-1,1)]{}{L}(\pi_2^{(u_1,-1,1)})&{\renewcommand\arraystretch{1} \l[\begin{array}{cccc}
0 & 1 & 0 & 0\\
0 & 0 & 0 & i\\
1 & 0 & 0 & 0\\
0 & 0 & -i& 0
\end{array}\r]} & {\renewcommand\arraystretch{1} \l[\begin{array}{cccc}
0 & 0 & 0 & 1\\
0 & i & 0 & 0\\
0 & 0 & i & 0\\
-1 & 0 & 0 & 0
\end{array}\r] }&{\renewcommand\arraystretch{1} \l[\begin{array}{cccc}
1 & 0 & 0 & 0\\
0 & 1 & 0 & 0\\
0 & 0 & 1 & 0\\
0 & 0 & 0 & 1
\end{array}\r]}
\end{array}\]
\[\stackrel[(-1,-1,1)]{}{BDL}(\pi_1^{(u_1,-1,1)})= \pi_3^{(-1,-1,1)}\oplus \pi_4^{(-1,-1,1)}\oplus \pi_5^{(-1,-1,1)},\quad U_1={\renewcommand\arraystretch{1}\l[\begin{array}{cccc}
1/2 & i/2 & -i/2 & i/2\\
1/2 & -i/2 & i/2 & i/2\\
0 & 1/\sqrt{2} & 1/\sqrt{2} & 0\\
1/\sqrt{2} & 0 & 0 & -i/\sqrt{2}
\end{array}\r]}\]
\[\stackrel[(-1,-1,1)]{}{BDL}(\pi_2^{(u_1,-1,1)}) = \pi_1^{(-1,-1,1)}\oplus \pi_2^{(-1,-1,1)}\oplus \pi_5^{(-1,-1,1)},\quad U_2={\renewcommand\arraystretch{1} \l[\begin{array}{cccc}
1/2 & -i/2 & i/2 & -i/2\\
i/2 & i/2 & -i/2 & i/2\\
1/\sqrt{2} & 0 & 0 & i/\sqrt{2}\\
0 & 1/\sqrt{2} & 1/\sqrt{2} & 0
\end{array}\r]}
\]
\end{example}

\renewcommand\arraystretch{2}
\begin{example}[2-orbit (Type 3) to 1-orbit (Type 2)] $(-1,-1,u_3)\rightarrow(-1,-1,1)$
\[\begin{array}{c|c|c|c}
& a & b & c\\\hline\hline
&&&\\[-.2in]
\stackrel[(-1,-1,1)]{}{L}(\pi_1^{(-1,-1,u_3)}) & {\renewcommand\arraystretch{1} \l[\begin{array}{cc}
-i & 0 \\
0 & -i
\end{array}\r]} & {\renewcommand\arraystretch{1}\l[\begin{array}{cc}
0 & -1 \\
1 & 0
\end{array}\r]}&{\renewcommand\arraystretch{1}\l[\begin{array}{cc}
1 & 0\\
0 & 1
\end{array}\r]}\\[\bigskipamount]
\stackrel[(-1,-1,1)]{}{L}(\pi_2^{(-1,-1,u_3)})&{\renewcommand\arraystretch{1} \l[\begin{array}{cc}
i & 0 \\
0 & i
\end{array}\r]} & {\renewcommand\arraystretch{1}\l[\begin{array}{cc}
0 & -1 \\
1 & 0
\end{array}\r]}&{\renewcommand\arraystretch{1}\l[\begin{array}{cc}
1 & 0\\
0 & 1
\end{array}\r]}\\[\bigskipamount]
\stackrel[(-1,-1,1)]{}{L}(\pi_3^{(-1,-1,u_3)})&{\renewcommand\arraystretch{1} \l[\begin{array}{cc}
-1 & 0 \\
0 & 1
\end{array}\r]} & {\renewcommand\arraystretch{1}\l[\begin{array}{cc}
0 & -1 \\
1 & 0
\end{array}\r]}&{\renewcommand\arraystretch{1}\l[\begin{array}{cc}
1 & 0\\
0 & 1
\end{array}\r]}\\[\bigskipamount]
\stackrel[(-1,-1,1)]{}{L}(\pi_4^{(-1,-1,u_3)})&{\renewcommand\arraystretch{1} \l[\begin{array}{cc}
1 & 0 \\
0 & -1
\end{array}\r]} & {\renewcommand\arraystretch{1}\l[\begin{array}{cc}
0 & -1 \\
1 & 0
\end{array}\r]}&{\renewcommand\arraystretch{1}\l[\begin{array}{cc}
1 & 0\\
0 & 1
\end{array}\r]}
\end{array}\]
\[\stackrel[(-1,-1,1)]{}{BDL}(\pi_1^{(-1,-1,u_3)})= \pi_2^{(-1,-1,1)}\oplus \pi_3^{(-1,-1,1)},\quad U_1={\renewcommand\arraystretch{1}\dfrac{1}{\sqrt{2}}\l[\begin{array}{cc}
1 & 1 \\
-i & i
\end{array}\r]}\]
\[\stackrel[(-1,-1,1)]{}{BDL}(\pi_2^{(-1,-1,u_3)})= \pi_1^{(-1,-1,1)}\oplus \pi_4^{(-1,-1,1)},\quad U_2= {\renewcommand\arraystretch{1}\dfrac{1}{\sqrt{2}}\l[\begin{array}{cc}
1 & 1 \\
-i & i
\end{array}\r]}\]
\[\stackrel[(-1,-1,1)]{}{BDL}(\pi_3^{(-1,-1,u_3)})= \pi_5^{(-1,-1,1)},\quad U_3= {\renewcommand\arraystretch{1}\dfrac{1}{\sqrt{2}}\l[\begin{array}{cc}
-1 & i\\
1 & i
\end{array}\r]}\]
\[\stackrel[(-1,-1,1)]{}{BDL}(\pi_4^{(-1,-1,u_3)})= \pi_5^{(-1,-1,1)},\quad U_4= {\renewcommand\arraystretch{1}\dfrac{1}{\sqrt{2}}\l[\begin{array}{cc}
1 & -i\\
1 & i
\end{array}\r]}\]
\end{example}


\newpage

\begin{appendices}

\section{Irreducible Representations of Dimension 3 Crystallographic Group 90}\label{appendix:start}

For each $\chi\in\wh{N}$, we let $\pi_j^{\chi}$ represent the $j^{\te{th}}$ element of $\wh{G}_{\chi}^{(\chi)}$. In particular, the number of inequivalent $\pi_j^{\chi}$ is exactly $|\wh{G}_{\chi}^{(\chi)}|$.

\subsection{Irreducible Representations Arising from 1-Orbits}

\renewcommand\arraystretch{1.85}

\[\begin{array}{c||c|c|c}
& a & b & c\\\hline\hline
\pi_1^{(1,1,1)} & [-1] & [-1] & [1] \\
\pi_2^{(1,1,1)} & [-1] & [1] & [1] \\
\pi_3^{(1,1,1)} & [1] & [-1] & [1]\\
\pi_4^{(1,1,1)} & [1] & [1] & [1]\\
\pi_5^{(1,1,1)} & {\renewcommand\arraystretch{1}\l[\begin{array}{cc}
0 & -1 \\
1 & 0
\end{array}\r]} & {\renewcommand\arraystretch{1} \l[\begin{array}{cc}
1 & 0 \\
0 & -1 
\end{array}\r]} & {\renewcommand\arraystretch{1} \l[\begin{array}{cc}
1 & 0 \\
0 & 1 
\end{array}\r]}\\[\bigskipamount]\hline
\pi_1^{(1,1,-1)} & [-1] & [-1] & [-1] \\
\pi_2^{(1,1,-1)} & [-1] & [1] & [-1] \\
\pi_3^{(1,1,-1)} & [1] & [-1] & [-1] \\
\pi_4^{(1,1,-1)} & [1] & [1] & [-1]\\
\pi_5^{(1,1,-1)} & {\renewcommand\arraystretch{1}\l[\begin{array}{cc}
0 & -1 \\
1 & 0
\end{array}\r]} & {\renewcommand\arraystretch{1}\l[\begin{array}{cc}
1 & 0 \\
0 & -1 
\end{array}\r]} & {\renewcommand\arraystretch{1}\l[\begin{array}{cc}
-1 & 0 \\
0 & -1 
\end{array}\r]}\\[\bigskipamount]\hline
\pi_1^{(-1,-1,1)} & [i] & [i] & [1] \\
\pi_2^{(-1,-1,1)} &[-i] & [i] & [1]\\
\pi_3^{(-1,-1,1)} & [-i] & [-i] & [1] \\
\pi_4^{(-1,-1,1)} & [i] & [-i] & [1] \\
\pi_5^{(-1,-1,1)} &{\renewcommand\arraystretch{1}\l[\begin{array}{cc}
0 & 1 \\
1 & 0
\end{array}\r]} & {\renewcommand\arraystretch{1}\l[\begin{array}{cc}
-i & 0 \\
0 & i
\end{array}\r]} & {\renewcommand\arraystretch{1}\l[\begin{array}{cc}
1 & 0 \\
0 & 1
\end{array}\r]}\\[\bigskipamount]\hline
\pi_1^{(-1,-1,-1)}&[i] & [i] & [-1] \\
\pi_2^{(-1,-1,-1)} &[-i] & [i] & [-1] \\
\pi_3^{(-1,-1,-1)}&[-i]&[-i]&[-1]\\
\pi_4^{(-1,-1,-1)}&[-i]&[-i]&[-1]\\
\pi_5^{(-1,-1,-1)} & {\renewcommand\arraystretch{1}\l[\begin{array}{cc}
0 & 1 \\
1 & 0
\end{array}\r]} & {\renewcommand\arraystretch{1}\l[\begin{array}{cc}
-i & 0 \\
0 & i 
\end{array}\r]} &{\renewcommand\arraystretch{1}\l[\begin{array}{cc}
-1 & 0 \\
0 & -1 
\end{array}\r]}
\end{array}\]

\newpage

\subsection{Irreducible Representations Arising from 2-Orbits}

\[\begin{array}{c||c|c|c}
& a & b & c\\\hline\hline
&&&\\[-.2in]
\pi^{(1,-1,1)} &{\renewcommand\arraystretch{1}\l[\begin{array}{cccc}
0 & 0 & -1 & 0 \\
0 & 0 & 0 & 1\\
1 & 0 & 0 & 0\\
0 & 1 & 0 & 0
\end{array}\r]} & {\renewcommand\arraystretch{1}\l[\begin{array}{cccc}
0 & -i & 0 & 0\\
-i & 0 & 0 & 0\\
0 & 0 & 0 & i\\
0 & 0 & -i & 0
\end{array}\r]} & {\renewcommand\arraystretch{1}\l[\begin{array}{cccc}
1 & 0 & 0 & 0\\
0 & 1 & 0 & 0\\
0 & 0 & 1 & 0\\
0 & 0 & 0 & 1
\end{array}\r]}\\[.3in]\hline
& & & \\[-.2in]
\pi^{(1,-1,-1)} &{\renewcommand\arraystretch{1}\l[\begin{array}{cccc}
0 & 0 & -1 & 0\\
0 & 0 & 0 & 1\\
1 & 0 & 0 & 0\\
0 & 1 & 0 & 0
\end{array}\r]} & {\renewcommand\arraystretch{1}\l[\begin{array}{cccc}
0 & -i & 0 & 0\\
-i & 0 & 0 & 0\\
0 & 0 & 0 & i\\
0 & 0 & -i & 0
\end{array}\r]} & {\renewcommand\arraystretch{1}\l[\begin{array}{cccc}
-1 & 0 & 0 & 0\\
0 & -1 & 0 & 0\\
0 & 0 & -1 & 0\\
0 & 0 & 0 & -1
\end{array}\r]}\\[.3in]\hline
& & & \\[-.2in]
\pi^{(-1,1,1)} &{\renewcommand\arraystretch{1}\l[\begin{array}{cccc}
0 & 0 & -1 & 0 \\
0 & 0 & 0 & 1\\
1 & 0 & 0 & 0\\
0 & 1 & 0 & 0
\end{array}\r]}&{\renewcommand\arraystretch{1}\l[\begin{array}{cccc}
0 & 1 & 0 & 0\\
1 & 0 & 0 & 0\\
0 & 0 & 0 & 1\\
0 & 0 & -1 & 0
\end{array}\r]} &{\renewcommand\arraystretch{1}\l[\begin{array}{cccc}
1 & 0 & 0 & 0\\
0 & 1 & 0 & 0\\
0 & 0 & 1 & 0\\
0 & 0 & 0 & 1
\end{array}\r]}\\[.3in]\hline
& & & \\[-.2in]
\pi^{(-1,1,-1)} &{\renewcommand\arraystretch{1}\l[\begin{array}{cccc}
0 & 0 & -1 & 0\\
0 & 0 & 0 & 1\\
1 & 0 & 0 & 0\\
0 & 1 & 0 & 0
\end{array}\r]}&{\renewcommand\arraystretch{1}\l[\begin{array}{cccc}
0 & 1 & 0 & 0\\
1 & 0 & 0 & 0\\
0 & 0 & 0 & 1\\
0 & 0 & -1 & 0
\end{array}\r]} &{\renewcommand\arraystretch{1}\l[\begin{array}{cccc}
-1 & 0 & 0 & 0\\
0 & -1 & 0 & 0\\
0 & 0 & -1 & 0\\
0 & 0 & 0 & -1
\end{array}\r]}\\[.3in]\hline
& & & \\[-.2in]
\pi_1^{(1,1,u_3)} & {\renewcommand\arraystretch{1} \l[\begin{array}{cc}
-i & 0 \\
0 & i
\end{array}\r]} & {\renewcommand\arraystretch{1}\l[\begin{array}{cc}
0 & 1 \\
1 & 0
\end{array}\r]}&{\renewcommand\arraystretch{1}\l[\begin{array}{cc}
u_3 & 0\\
0 & \ov{u_3}
\end{array}\r]}\\[\bigskipamount]
\pi_2^{(1,1,u_3)}&{\renewcommand\arraystretch{1} \l[\begin{array}{cc}
i & 0 \\
0 & -i
\end{array}\r]} & {\renewcommand\arraystretch{1}\l[\begin{array}{cc}
0 & 1 \\
1 & 0
\end{array}\r]}&{\renewcommand\arraystretch{1}\l[\begin{array}{cc}
u_3 & 0\\
0 & \ov{u_3}
\end{array}\r]}\\[\bigskipamount]
\pi_3^{(1,1,u_3)}&{\renewcommand\arraystretch{1} \l[\begin{array}{cc}
-1 & 0 \\
0 & -1
\end{array}\r]} & {\renewcommand\arraystretch{1}\l[\begin{array}{cc}
0 & 1 \\
1 & 0
\end{array}\r]}&{\renewcommand\arraystretch{1}\l[\begin{array}{cc}
u_3 & 0\\
0 & \ov{u_3}
\end{array}\r]}\\[\bigskipamount]
\makecell{\pi_4^{(1,1,u_3)}\\u_3\not\in\{-1,1\}}&{\renewcommand\arraystretch{1} \l[\begin{array}{cc}
1 & 0 \\
0 & 1
\end{array}\r]} & {\renewcommand\arraystretch{1}\l[\begin{array}{cc}
0 & 1 \\
1 & 0
\end{array}\r]}&{\renewcommand\arraystretch{1}\l[\begin{array}{cc}
u_3 & 0\\
0 & \ov{u_3}
\end{array}\r]}\\[.2in]\hline
& & & \\[-.2in]
\pi_1^{(-1,-1,u_3)} & {\renewcommand\arraystretch{1} \l[\begin{array}{cc}
-i & 0 \\
0 & -i
\end{array}\r]} & {\renewcommand\arraystretch{1}\l[\begin{array}{cc}
0 & -1 \\
1 & 0
\end{array}\r]}&{\renewcommand\arraystretch{1}\l[\begin{array}{cc}
u_3 & 0\\
0 & \ov{u_3}
\end{array}\r]}\\[\bigskipamount]
\pi_2^{(-1,-1,u_3)}&{\renewcommand\arraystretch{1} \l[\begin{array}{cc}
i & 0 \\
0 & i
\end{array}\r]} & {\renewcommand\arraystretch{1}\l[\begin{array}{cc}
0 & -1 \\
1 & 0
\end{array}\r]}&{\renewcommand\arraystretch{1}\l[\begin{array}{cc}
u_3 & 0\\
0 & \ov{u_3}
\end{array}\r]}\\[\bigskipamount]
\pi_3^{(-1,-1,u_3)}&{\renewcommand\arraystretch{1} \l[\begin{array}{cc}
-1 & 0 \\
0 & 1
\end{array}\r]} & {\renewcommand\arraystretch{1}\l[\begin{array}{cc}
0 & -1 \\
1 & 0
\end{array}\r]}&{\renewcommand\arraystretch{1}\l[\begin{array}{cc}
u_3 & 0\\
0 & \ov{u_3}
\end{array}\r]}\\[\bigskipamount]
\makecell{\pi_4^{(-1,-1,u_3)}\\u_3\not\in\{-1,1\}}&{\renewcommand\arraystretch{1} \l[\begin{array}{cc}
1 & 0 \\
0 & -1
\end{array}\r]} & {\renewcommand\arraystretch{1}\l[\begin{array}{cc}
0 & -1 \\
1 & 0
\end{array}\r]}&{\renewcommand\arraystretch{1}\l[\begin{array}{cc}
u_3 & 0\\
0 & \ov{u_3}
\end{array}\r]}
\end{array}\]

\newpage

\subsection{Irreducible Representations Arising from 4-Orbits}
\[\begin{array}{c||c|c|c}
& a & b & c\\\hline\hline
&&&\\[-.2in]
\pi_1^{(u_1,u_1,1)}&{\renewcommand\arraystretch{1}\l[\begin{array}{cccc}
0 & 1 & 0 & 0\\
0 & 0 & 0 & 1\\
1 & 0 & 0 & 0\\
0 & 0 & 1 & 0
\end{array}\r]} & {\renewcommand\arraystretch{1}\l[\begin{array}{cccc}
0 & 0 & -\ov{u_1} & 0\\
0 & 0 & 0 & -1\\
-1 & 0 & 0 & 0\\
0 & -u_1 & 0 & 0
\end{array}\r]} &{\renewcommand\arraystretch{1}\l[\begin{array}{cccc}
1 & 0 & 0 & 0\\
0 & 1 & 0 & 0\\
0 & 0 & 1 & 0\\
0 & 0 & 0 & 1
\end{array}\r]}\\[.35in]
\makecell{\pi_2^{(u_1,u_1,1)}\\u_1\not\in\{-1,1\}}&{\renewcommand\arraystretch{1}\l[\begin{array}{cccc}
0 & 1 & 0 & 0\\
0 & 0 & 0 & 1\\
1 & 0 & 0 & 0\\
0 & 0 & 1 & 0
\end{array}\r]} &  {\renewcommand\arraystretch{1}\l[\begin{array}{cccc}
0 & 0 & \ov{u_1} & 0\\
0 & 0 & 0 & 1\\
1 & 0 & 0 & 0\\
0 & u_1 & 0 & 0
\end{array}\r]} & {\renewcommand\arraystretch{1}\l[\begin{array}{cccc}
1 & 0 & 0 & 0\\
0 & 1 & 0 & 0\\
0 & 0 & 1 & 0\\
0 & 0 & 0 & 1
\end{array}\r]}\\[.3in]\hline
& & & \\[-.2in]
\pi_1^{(u_1,u_1,-1)}&{\renewcommand\arraystretch{1}\l[\begin{array}{cccc}
0 & 1 & 0 & 0\\
0 & 0 & 0 & 1\\
1 & 0 & 0 & 0\\
0 & 0 & 1 & 0
\end{array}\r]} & {\renewcommand\arraystretch{1}\l[\begin{array}{cccc}
0 & 0 & -\ov{u_1} & 0\\
0 & 0 & 0 & -1\\
-1 & 0 & 0 & 0\\
0 & -u_1 & 0 & 0
\end{array}\r]} &{\renewcommand\arraystretch{1}\l[\begin{array}{cccc}
-1 & 0 & 0 & 0\\
0 & -1 & 0 & 0\\
0 & 0 & -1 & 0\\
0 & 0 & 0 & -1
\end{array}\r]}\\[.35in]
\makecell{\pi_2^{(u_1,u_1,-1)}\\u_1\not\in\{-1,1\}}&{\renewcommand\arraystretch{1}\l[\begin{array}{cccc}
0 & 1 & 0 & 0\\
0 & 0 & 0 & 1\\
1 & 0 & 0 & 0\\
0 & 0 & 1 & 0
\end{array}\r]} &  {\renewcommand\arraystretch{1}\l[\begin{array}{cccc}
0 & 0 & \ov{u_1} & 0\\
0 & 0 & 0 & 1\\
1 & 0 & 0 & 0\\
0 & u_1 & 0 & 0
\end{array}\r]} & {\renewcommand\arraystretch{1}\l[\begin{array}{cccc}
-1 & 0 & 0 & 0\\
0 & -1 & 0 & 0\\
0 & 0 & -1 & 0\\
0 & 0 & 0 & -1
\end{array}\r]}\\[.3in]\hline
& & & \\[-.2in]
\pi_1^{(u_1,\ov{u_1},1)} & {\renewcommand\arraystretch{1}\l[\begin{array}{cccc}
0 & 1 & 0 & 0\\
0 & 0 & 0 & 1\\
1 & 0 & 0 & 0\\
0 & 0 & 1 & 0
\end{array}\r]} & {\renewcommand\arraystretch{1}\l[\begin{array}{cccc}
0 & 1 & 0 & 0\\
u_1 & 0 & 0 & 0\\
0 & 0 & 0 & \ov{u_1}\\
0 &  & 1 & 0
\end{array}\r]} & {\renewcommand\arraystretch{1}\l[\begin{array}{cccc}
1 & 0 & 0 & 0\\
0 & 1 & 0 & 0\\
0 & 0 & 1 & 0\\
0 & 0 & 0 & 1
\end{array}\r]}\\[.35in]
\makecell{\pi_2^{(u_1,\ov{u_1},1)}\\u_1\not\in\{-1,1\}}&{\renewcommand\arraystretch{1}\l[\begin{array}{cccc}
0 & 1 & 0 & 0\\
0 & 0 & 0 & 1\\
1 & 0 & 0 & 0\\
0 & 0 & 1 & 0
\end{array}\r]} & {\renewcommand\arraystretch{1}\l[\begin{array}{cccc}
0 & -1 & 0 & 0\\
-u_1 & 0 & 0 & 0\\
0 & 0 & 0 & -\ov{u_1}\\
0 & 0 & -1 & 0
\end{array}\r]} & {\renewcommand\arraystretch{1}\l[\begin{array}{cccc}
1 & 0 & 0 & 0\\
0 & 1 & 0 & 0\\
0 & 0 & 1 & 0\\
0 & 0 & 0 & 1
\end{array}\r]}\\[.3in]\hline
& & & \\[-.2in]
\pi_1^{(u_1,\ov{u_1},-1)} & {\renewcommand\arraystretch{1}\l[\begin{array}{cccc}
0 & 1 & 0 & 0\\
0 & 0 & 0 & 1\\
1 & 0 & 0 & 0\\
0 & 0 & 1 & 0
\end{array}\r]} & {\renewcommand\arraystretch{1}\l[\begin{array}{cccc}
0 & 1 & 0 & 0\\
u_1 & 0 & 0 & 0\\
0 & 0 & 0 & \ov{u_1}\\
0 & 0 & 1 & 0
\end{array}\r]} & {\renewcommand\arraystretch{1}\l[\begin{array}{cccc}
-1 & 0 & 0 & 0\\
0 & -1 & 0 & 0\\
0 & 0 & -1 & 0\\
0 & 0 & 0 & -1
\end{array}\r]}\\[.35in]
\makecell{\pi_2^{(u_1,\ov{u_1},-1)}\\u_1\not\in\{-1,1\}}&{\renewcommand\arraystretch{1}\l[\begin{array}{cccc}
0 & 1 & 0 & 0\\
0 & 0 & 0 & 1\\
1 & 0 & 0 & 0\\
0 & 0 & 1 & 0
\end{array}\r]} & {\renewcommand\arraystretch{1}\l[\begin{array}{cccc}
0 & -1 & 0 & 0\\
-u_1 & 0 & 0 & 0\\
0 & 0 & 0 & -\ov{u_1}\\
0 & 0 & -1 & 0
\end{array}\r]} & {\renewcommand\arraystretch{1}\l[\begin{array}{cccc}
-1 & 0 & 0 & 0\\
0 & -1 & 0 & 0\\
0 & 0 & -1 & 0\\
0 & 0 & 0 & -1
\end{array}\r]}\\[.3in]\hline
& & & \\[-.2in]
\pi_1^{(u_1,1,1)} & {\renewcommand\arraystretch{1}
\l[\begin{array}{cccc}
0 & 1 & 0 & 0\\
0 & 0 & 0 & u_1^{1/2}\\
1 & 0 & 0 & 0\\
0 & 0 & \ov{u_1}^{1/2}& 0
\end{array}\r]} & {\renewcommand\arraystretch{1}\l[\begin{array}{cccc}
0 & 0 & 0 & 1\\
0 & u_1^{1/2} & 0 & 0\\
0 & 0 & \ov{u_1}^{1/2} & 0\\
1 & 0 & 0 & 0
\end{array}\r]} & {\renewcommand\arraystretch{1}\l[\begin{array}{cccc}
1 & 0 & 0 & 0\\
0 & 1 & 0 & 0\\
0 & 0 & 1 & 0\\
0 & 0 & 0 & 1
\end{array}\r]}\\[.35in]
\makecell{\pi_2^{(u_1,1,1)}\\u_1\not\in\{-1,1\}}&{\renewcommand\arraystretch{1}\l[\begin{array}{cccc}
0 & 1 & 0 & 0\\
0 & 0 & 0 & -u_1^{1/2}\\
1 & 0 & 0 & 0\\
0 & 0 & -\ov{u_1}^{1/2}& 0
\end{array}\r]} & {\renewcommand\arraystretch{1}\l[\begin{array}{cccc}
0 & 0 & 0 & 1\\
0 & -u_1^{1/2} & 0 & 0\\
0 & 0 & -\ov{u_1}^{1/2} & 0\\
1 & 0 & 0 & 0
\end{array}\r]} & {\renewcommand\arraystretch{1}\l[\begin{array}{cccc}
1 & 0 & 0 & 0\\
0 & 1 & 0 & 0\\
0 & 0 & 1 & 0\\
0 & 0 & 0 & 1
\end{array}\r]}
\end{array}\]

\newpage

\[\begin{array}{c||c|c|c}
& a & b & c \\\hline\hline
&&&\\[-.2in]
\pi_1^{(u_1,1,-1)} & {\renewcommand\arraystretch{1}
\l[\begin{array}{cccc}
0 & 1 & 0 & 0\\
0 & 0 & 0 & u_1^{1/2}\\
1 & 0 & 0 & 0\\
0 & 0 & \ov{u_1}^{1/2}& 0
\end{array}\r]} & {\renewcommand\arraystretch{1}\l[\begin{array}{cccc}
0 & 0 & 0 & 1\\
0 & u_1^{1/2} & 0 & 0\\
0 & 0 & \ov{u_1}^{1/2} & 0\\
1 & 0 & 0 & 0
\end{array}\r]} & {\renewcommand\arraystretch{1}\l[\begin{array}{cccc}
-1 & 0 & 0 & 0\\
0 & -1 & 0 & 0\\
0 & 0 & -1 & 0\\
0 & 0 & 0 & -1
\end{array}\r]}\\[.35in]
\makecell{\pi_2^{(u_1,1,-1)}\\u_1\not\in\{-1,1\}}&{\renewcommand\arraystretch{1}\l[\begin{array}{cccc}
0 & 1 & 0 & 0\\
0 & 0 & 0 & -u_1^{1/2}\\
1 & 0 & 0 & 0\\
0 & 0 & -\ov{u_1}^{1/2}& 0
\end{array}\r]} & {\renewcommand\arraystretch{1}\l[\begin{array}{cccc}
0 & 0 & 0 & 1\\
0 & -u_1^{1/2} & 0 & 0\\
0 & 0 & -\ov{u_1}^{1/2} & 0\\
1 & 0 & 0 & 0
\end{array}\r]} & {\renewcommand\arraystretch{1}\l[\begin{array}{cccc}
-1 & 0 & 0 & 0\\
0 & -1 & 0 & 0\\
0 & 0 & -1 & 0\\
0 & 0 & 0 & -1
\end{array}\r]}\\[.3in]\hline
& & & \\[-.2in]
\pi_1^{(u_1,-1,1)} & {\renewcommand\arraystretch{1}
 \l[\begin{array}{cccc}
0 & 1 & 0 & 0\\
0 & 0 & 0 & -u_1^{1/2}\\
1 & 0 & 0 & 0\\
0 & 0 & -\ov{u_1}^{1/2}& 0
\end{array}\r]} & {\renewcommand\arraystretch{1} \l[\begin{array}{cccc}
0 & 0 & 0 & 1\\
0 & -u_1^{1/2} & 0 & 0\\
0 & 0 & \ov{u_1}^{1/2} & 0\\
-1 & 0 & 0 & 0
\end{array}\r] } & {\renewcommand\arraystretch{1}\l[\begin{array}{cccc}
1 & 0 & 0 & 0\\
0 & 1 & 0 & 0\\
0 & 0 & 1 & 0\\
0 & 0 & 0 & 1
\end{array}\r]}\\[.35in]
\makecell{\pi_2^{(u_1,-1,1)}\\u_1\not\in\{-1,1\}}&{\renewcommand\arraystretch{1} \l[\begin{array}{cccc}
0 & 1 & 0 & 0\\
0 & 0 & 0 & u_1^{1/2}\\
1 & 0 & 0 & 0\\
0 & 0 & \ov{u_1}^{1/2}& 0
\end{array}\r]} & {\renewcommand\arraystretch{1}\l[\begin{array}{cccc}
0 & 0 & 0 & 1\\
0 & u_1^{1/2} & 0 & 0\\
0 & 0 & -\ov{u_1}^{1/2} & 0\\
-1 & 0 & 0 & 0
\end{array}\r] } & {\renewcommand\arraystretch{1}\l[\begin{array}{cccc}
1 & 0 & 0 & 0\\
0 & 1 & 0 & 0\\
0 & 0 & 1 & 0\\
0 & 0 & 0 & 1
\end{array}\r]}\\[.3in]\hline
& & & \\[-.2in]
\pi_1^{(u_1,-1,-1)} & {\renewcommand\arraystretch{1}
 \l[\begin{array}{cccc}
0 & 1 & 0 & 0\\
0 & 0 & 0 & -u_1^{1/2}\\
1 & 0 & 0 & 0\\
0 & 0 & -\ov{u_1}^{1/2}& 0
\end{array}\r]} & {\renewcommand\arraystretch{1} \l[\begin{array}{cccc}
0 & 0 & 0 & 1\\
0 & -u_1^{1/2} & 0 & 0\\
0 & 0 & \ov{u_1}^{1/2} & 0\\
-1 & 0 & 0 & 0
\end{array}\r] } & {\renewcommand\arraystretch{1}\l[\begin{array}{cccc}
-1 & 0 & 0 & 0\\
0 & -1 & 0 & 0\\
0 & 0 & -1 & 0\\
0 & 0 & 0 & -1
\end{array}\r]}\\[.35in]
\makecell{\pi_2^{(u_1,-1,-1)}\\u_1\not\in\{-1,1\}}&{\renewcommand\arraystretch{1} \l[\begin{array}{cccc}
0 & 1 & 0 & 0\\
0 & 0 & 0 & u_1^{1/2}\\
1 & 0 & 0 & 0\\
0 & 0 & \ov{u_1}^{1/2}& 0
\end{array}\r]} & {\renewcommand\arraystretch{1}\l[\begin{array}{cccc}
0 & 0 & 0 & 1\\
0 & u_1^{1/2} & 0 & 0\\
0 & 0 & -\ov{u_1}^{1/2} & 0\\
-1 & 0 & 0 & 0
\end{array}\r] }& {\renewcommand\arraystretch{1}\l[\begin{array}{cccc}
-1 & 0 & 0 & 0\\
0 & -1 & 0 & 0\\
0 & 0 & -1 & 0\\
0 & 0 & 0 & -1
\end{array}\r]}\\[.3in]\hline
& & & \\[-.2in]
\pi_1^{(1,u_2,1)} & {\renewcommand\arraystretch{1}\l[\begin{array}{cccc}
0 & 1 & 0 & 0\\
0 & 0 & 0 & 1\\
1 & 0 & 0 & 0\\
0 & 0 & 1 & 0
\end{array}\r]} & {\renewcommand\arraystretch{1}\l[\begin{array}{cccc}
-\ov{u_2}^{1/2} & 0 & 0 & 0\\
0 & 0 & -\ov{u_2}^{1/2} & 0\\
0 & -u_2^{1/2} & 0 & 0\\
0 & 0 & 0 & -u_2^{1/2}
\end{array}\r]} & {\renewcommand\arraystretch{1}\l[\begin{array}{cccc}
1 & 0 & 0 & 0\\
0 & 1 & 0 & 0\\
0 & 0 & 1 & 0\\
0 & 0 & 0 & 1
\end{array}\r]}\\[.35in]
\makecell{\pi_2^{(1,u_2,1)}\\u_2\not\in\{-1,1\}} & {\renewcommand\arraystretch{1}\l[\begin{array}{cccc}
0 & 1 & 0 & 0\\
0 & 0 & 0 & 1\\
1 & 0 & 0 & 0\\
0 & 0 & 1 & 0
\end{array}\r]} & {\renewcommand\arraystretch{1}\l[\begin{array}{cccc}
\ov{u_2}^{1/2} & 0 & 0 & 0\\
0 & 0 & \ov{u_2}^{1/2} & 0\\
0 & u_2^{1/2} & 0 & 0\\
0 & 0 & 0 & u_2^{1/2}
\end{array}\r]} & {\renewcommand\arraystretch{1}\l[\begin{array}{cccc}
1 & 0 & 0 & 0\\
0 & 1 & 0 & 0\\
0 & 0 & 1 & 0\\
0 & 0 & 0 & 1
\end{array}\r]}\\[.3in]\hline
& & & \\[-.2in]
\pi_1^{(1,u_2,-1)} & {\renewcommand\arraystretch{1}\l[\begin{array}{cccc}
0 & 1 & 0 & 0\\
0 & 0 & 0 & 1\\
1 & 0 & 0 & 0\\
0 & 0 & 1 & 0
\end{array}\r]} & {\renewcommand\arraystretch{1}\l[\begin{array}{cccc}
-\ov{u_2}^{1/2} & 0 & 0 & 0\\
0 & 0 & -\ov{u_2}^{1/2} & 0\\
0 & -u_2^{1/2} & 0 & 0\\
0 & 0 & 0 & -u_2^{1/2}
\end{array}\r]} & {\renewcommand\arraystretch{1}\l[\begin{array}{cccc}
-1 & 0 & 0 & 0\\
0 & -1 & 0 & 0\\
0 & 0 & -1 & 0\\
0 & 0 & 0 & -1
\end{array}\r]}\\[.35in]
\makecell{\pi_2^{(1,u_2,-1)}\\u_2\not\in\{-1,1\}} & {\renewcommand\arraystretch{1}\l[\begin{array}{cccc}
0 & 1 & 0 & 0\\
0 & 0 & 0 & 1\\
1 & 0 & 0 & 0\\
0 & 0 & 1 & 0
\end{array}\r]} & {\renewcommand\arraystretch{1}\l[\begin{array}{cccc}
\ov{u_2}^{1/2} & 0 & 0 & 0\\
0 & 0 & \ov{u_2}^{1/2} & 0\\
0 & u_2^{1/2} & 0 & 0\\
0 & 0 & 0 & u_2^{1/2}
\end{array}\r]} & {\renewcommand\arraystretch{1}\l[\begin{array}{cccc}
-1 & 0 & 0 & 0\\
0 & -1 & 0 & 0\\
0 & 0 & -1 & 0\\
0 & 0 & 0 & -1
\end{array}\r]}
\end{array}\]

\newpage

\[\begin{array}{c||c|c|c}
& a & b & c \\\hline\hline
&&&\\[-.2in]
\pi_1^{(-1,u_2,1)} & {\renewcommand\arraystretch{1}\l[\begin{array}{cccc}
0 & 1 & 0 & 0\\
0 & 0 & 0 & 1\\
1 & 0 & 0 & 0\\
0 & 0 & 1 & 0
\end{array}\r]} & {\renewcommand\arraystretch{1}\l[\begin{array}{cccc}
-\ov{u_2}^{1/2} & 0 & 0 & 0\\
0 & 0 & \ov{u_2}^{1/2} & 0\\
0 & -u_2^{1/2} & 0 & 0\\
0 & 0 & 0 & u_2^{1/2}
\end{array}\r]} & {\renewcommand\arraystretch{1}\l[\begin{array}{cccc}
1 & 0 & 0 & 0\\
0 & 1 & 0 & 0\\
0 & 0 & 1 & 0\\
0 & 0 & 0 & 1
\end{array}\r]}\\[.35in]
\makecell{\pi_2^{(-1,u_2,1)}\\u_2\not\in\{-1,1\}} & {\renewcommand\arraystretch{1}\l[\begin{array}{cccc}
0 & 1 & 0 & 0\\
0 & 0 & 0 & 1\\
1 & 0 & 0 & 0\\
0 & 0 & 1 & 0
\end{array}\r]} & {\renewcommand\arraystretch{1}\l[\begin{array}{cccc}
\ov{u_2}^{1/2} & 0 & 0 & 0\\
0 & 0 & -\ov{u_2}^{1/2} & 0\\
0 & u_2^{1/2} & 0 & 0\\
0 & 0 & 0 & -u_2^{1/2}
\end{array}\r]} & {\renewcommand\arraystretch{1}\l[\begin{array}{cccc}
1 & 0 & 0 & 0\\
0 & 1 & 0 & 0\\
0 & 0 & 1 & 0\\
0 & 0 & 0 & 1
\end{array}\r]}\\[.3in]\hline
& & & \\[-.2in]
\pi_1^{(-1,u_2,-1)} & {\renewcommand\arraystretch{1}\l[\begin{array}{cccc}
0 & 1 & 0 & 0\\
0 & 0 & 0 & 1\\
1 & 0 & 0 & 0\\
0 & 0 & 1 & 0
\end{array}\r]} & {\renewcommand\arraystretch{1}\l[\begin{array}{cccc}
-\ov{u_2}^{1/2} & 0 & 0 & 0\\
0 & 0 & \ov{u_2}^{1/2} & 0\\
0 & -u_2^{1/2} & 0 & 0\\
0 & 0 & 0 & u_2^{1/2}
\end{array}\r]} & {\renewcommand\arraystretch{1}\l[\begin{array}{cccc}
-1 & 0 & 0 & 0\\
0 & -1 & 0 & 0\\
0 & 0 & -1 & 0\\
0 & 0 & 0 & -1
\end{array}\r]}\\[.35in]
\makecell{\pi_2^{(-1,u_2,-1)}\\u_2\not\in\{-1,1\}} & {\renewcommand\arraystretch{1}\l[\begin{array}{cccc}
0 & 1 & 0 & 0\\
0 & 0 & 0 & 1\\
1 & 0 & 0 & 0\\
0 & 0 & 1 & 0
\end{array}\r]} & {\renewcommand\arraystretch{1}\l[\begin{array}{cccc}
\ov{u_2}^{1/2} & 0 & 0 & 0\\
0 & 0 & -\ov{u_2}^{1/2} & 0\\
0 & u_2^{1/2} & 0 & 0\\
0 & 0 & 0 & -u_2^{1/2}
\end{array}\r]} & {\renewcommand\arraystretch{1}\l[\begin{array}{cccc}
-1 & 0 & 0 & 0\\
0 & -1 & 0 & 0\\
0 & 0 & -1 & 0\\
0 & 0 & 0 & -1
\end{array}\r]}\\[.3in]\hline
& & & \\[-.2in]
\pi_1^{(1,-1,u_3)} & {\renewcommand\arraystretch{1}\l[\begin{array}{cccc}
0 & -1 & 0 & 0\\
1 & 0 & 0 & 0 \\
0 & 0 & 0 & 1 \\
0 & 0 & 1 & 0
\end{array}\r]} & {\renewcommand\arraystretch{1}\l[\begin{array}{cccc}
0 & 0 & -1 & 0\\
0 & 0 & 0 & 1\\
1 & 0 & 0 & 0\\
0 & 1 & 0 & 0
\end{array}\r]} & {\renewcommand\arraystretch{1}\l[\begin{array}{cccc}
u_3 & 0 & 0 & 0\\
0 & u_3 & 0 & 0\\
0 & 0 & \ov{u_3} & 0\\
0 & 0 & 0 & \ov{u_3}
\end{array}\r]}\\[.35in]
\makecell{\pi_2^{(1,-1,u_3)}\\u_3\not\in\{-1,1\}} & {\renewcommand\arraystretch{1}\l[\begin{array}{cccc}
0 & 1 & 0 & 0\\
1 & 0 & 0 & 0 \\
0 & 0 & 0 & -1 \\
0 & 0 & 1 & 0
\end{array}\r]} & {\renewcommand\arraystretch{1}\l[\begin{array}{cccc}
0 & 0 & -1 & 0\\
0 & 0 & 0 & -1\\
1 & 0 & 0 & 0\\
0 & -1 & 0 & 0
\end{array}\r]} & {\renewcommand\arraystretch{1}\l[\begin{array}{cccc}
u_3 & 0 & 0 & 0\\
0 & u_3 & 0 & 0\\
0 & 0 & \ov{u_3} & 0\\
0 & 0 & 0 & \ov{u_3}
\end{array}\r]}\\[.3in]\hline
& & & \\[-.2in]
\pi_1^{(-1,1,u_3)} & {\renewcommand\arraystretch{1}\l[\begin{array}{cccc}
0 & -1 & 0 & 0\\
1 & 0 & 0 & 0 \\
0 & 0 & 0 & 1 \\
0 & 0 & 1 & 0
\end{array}\r]} & {\renewcommand\arraystretch{1}\l[\begin{array}{cccc}
0 & 0 & 1 & 0\\
0 & 0 & 0 & 1\\
1 & 0 & 0 & 0\\
0 & -1 & 0 & 0
\end{array}\r]} & {\renewcommand\arraystretch{1}\l[\begin{array}{cccc}
u_3 & 0 & 0 & 0\\
0 & u_3 & 0 & 0\\
0 & 0 & \ov{u_3} & 0\\
0 & 0 & 0 & \ov{u_3}
\end{array}\r]}\\[.35in]
\makecell{\pi_2^{(-1,1,u_3)}\\u_3\not\in\{-1,1\}} & {\renewcommand\arraystretch{1}\l[\begin{array}{cccc}
0 & 1 & 0 & 0\\
1 & 0 & 0 & 0 \\
0 & 0 & 0 & -1 \\
0 & 0 & 1 & 0
\end{array}\r]} & {\renewcommand\arraystretch{1}\l[\begin{array}{cccc}
0 & 0 & 1 & 0\\
0 & 0 & 0 & -1\\
1 & 0 & 0 & 0\\
0 & 1 & 0 & 0
\end{array}\r]} & {\renewcommand\arraystretch{1}\l[\begin{array}{cccc}
u_3 & 0 & 0 & 0\\
0 & u_3 & 0 & 0\\
0 & 0 & \ov{u_3} & 0\\
0 & 0 & 0 & \ov{u_3}
\end{array}\r]}
\end{array}\]

\subsection{Irreducible Representations Arising from 8-Orbits}

$(u_1,u_2,u_3)$ not of any previous types.

\renewcommand\arraystretch{1}

\[\pi^{(u_1,u_2,u_3)}(a)=\l[\begin{array}{ccccccccc}
0 & 0 & 1 & 0 & 0 & 0 & 0 & 0\\
1 & 0 & 0 & 0 & 0 & 0 & 0 & 0\\
0 & 0 & 0 & 0 & 1 & 0 & 0 & 0\\
0 & 0 & 0 & 0 & 0 & 0 & 1 & 0\\
0 & 1 & 0 & 0 & 0 & 0 & 0 & 0\\
0 & 0 & 0 & 1 & 0 & 0 & 0 & 0\\
0 & 0 & 0 & 0 & 0 & 0 & 0 & 1\\
0 & 0 & 0 & 0 & 0 & 1 & 0 & 0
\end{array}\r]\]
\vspace{.05in}

\[\pi^{(u_1,u_2,u_3)}(b)=\l[\begin{array}{cccccccc}
0 & 0 & 0 & \ov{u_2} & 0 & 0 & 0 &0\\
0 & 0 & 0 & 0 & 0 & 0 & 1 & 0\\
0 & 0 & 0 & 0 & 0 & u_1\ov{u_2}& 0 & 0\\
1 & 0 & 0 & 0 & 0 & 0 & 0 & 0 \\
0 & 0 & 0 & 0 & 0 & 0 & 0 & u_1\\
0 & 0 & u_2 & 0 & 0 & 0 & 0 & 0\\
0 & \ov{u_1} & 0 & 0 & 0 & 0 & 0 &0\\
0 & 0 & 0 & 0 & \ov{u_1}u_2 & 0 & 0 & 0
\end{array}\r]\]
\vspace{.05in}

\[\pi^{(u_1,u_2,u_3)}(c)=\l[\begin{array}{cccccccc}
u_3 & \\
 & u_3 & \\
 & & u_3 & \\
 & & & \ov{u_3} & \\
 & & & & u_3 & \\
 & & & & & \ov{u_3}\\
 & & & & & & \ov{u_3}\\
 & & & & & & & \ov{u_3}
\end{array}\r]\]

\end{appendices}

\printbibliography

\end{document}